\begin{document}

\title{\MakeUppercase{\hammockstitle}}
\date{\today}
\author{\textsc{Aaron Mazel-Gee}}

\begin{abstract}
% {\color{blue} [In this technical note, ...]}
We study the \textit{homotopy theory} of $\infty$-categories enriched in the $\infty$-category $s\S$ of simplicial spaces.  That is, we consider $s\S$-enriched $\infty$-categories as presentations of ordinary $\infty$-categories by means of a ``local'' geometric realization functor $\CatsS \ra \Cati$, and we prove that their homotopy theory presents the $\infty$-category of $\infty$-categories, i.e.\! that this functor induces an equivalence $\loc{\CatsS}{\bW_\DK} \xra{\sim} \Cati$ from a localization of the $\infty$-category of $s\S$-enriched $\infty$-categories.

Following Dwyer--Kan, we define a \textit{hammock localization} functor from relative $\infty$-categories to $s\S$-enriched $\infty$-categories, thus providing a rich source of examples of $s\S$-enriched $\infty$-categories.  Simultaneously unpacking and generalizing one of their key results, we prove that given a relative $\infty$-category admitting a \textit{homotopical three-arrow calculus}, one can explicitly describe the hom-spaces in the $\infty$-category presented by its hammock localization in a much more explicit and accessible way.

As an application of this framework, we give sufficient conditions for the Rezk nerve of a relative $\infty$-category to be a (complete) Segal space, generalizing joint work with Low.
\end{abstract}

\maketitle

\papernum{4}

\setcounter{tocdepth}{1}
\tableofcontents

\setcounter{section}{-1}

\section{Introduction}

\subsection{Introducing (even more) homotopy theory}\label{subsection intro to hammocks}

In their groundbreaking papers \cite{DKSimpLoc} and \cite{DKCalc}, Dwyer--Kan gave the first presentation of \textit{the $\infty$-category of $\infty$-categories}, namely the category
$ \strcat_{s\Set} $
of \bit{categories enriched in simplicial sets}: in modern language, every $s\Set$-enriched category has an \textit{underlying $\infty$-category}, and this association induces an equivalence
\[ \loc{\strcat_{s\Set}}{\bW_\DK} \xra{\sim} \Cati \]
from the ($\infty$-categorical) localization of the category $\strcat_{s\Set}$ at the subcategory $\bW_\DK \subset \strcat_{s\Set}$ of \textit{Dwyer--Kan weak equivalences} to the $\infty$-category $\Cati$ of $\infty$-categories.  Moreover, Dwyer--Kan provided a method of \bit{``introducing homotopy theory''} into a category $\R$ equipped with a subcategory $\bW \subset \R$ of weak equivalences, namely their \textit{hammock localization} functor $\hamd : \strrelcat \ra \strcat_{s\Set}$ of \cite{DKCalc}.%\footnote{They also introduced a \textit{simplicial localization} functor (of the same signature) in \cite{DKSimpLoc}.  These two localization functors, which they showed to be (naturally Dwyer--Kan weakly) equivalent, play distinct and complementary roles: the simplicial localization visibly computes the $\infty$-categorical localization $\loc{\R}{\bW}$, while the hammock localization admits an explicit ``geometric'' description of its hom-simplicial sets (and in particular admits an evident localization map from $\R$ itself).}

In this paper, we set up an analogous framework in the setting of \textit{$\infty$-categories}: we prove that the $\infty$-category $\CatsS$ of \bit{$\infty$-categories enriched in simplicial spaces} likewise models the $\infty$-category of $\infty$-categories via an equivalence
\[ \loc{\CatsS}{\bW_\DK} \xra{\sim} \Cati , \]
and we define a \textit{hammock localization} functor
$ \ham : \RelCati \ra \CatsS $
which likewise provides a method of \bit{``introducing (even more) homotopy theory''} into relative $\infty$-categories.
% into an $\infty$-category $\R$ equipped with a subcategory $\bW \subset \R$ of weak equivalences.
We moreover prove the following two results -- the first generalizing a theorem of Dwyer--Kan, the second generalizing joint work with Low (see \cite{LowMG}).

\begin{thm*}[\ref{calculus gives reduction}]
Given a relative $\infty$-category $(\R,\bW)$ admitting a \bit{homotopical three-arrow calculus}, the hom-spaces in the underlying $\infty$-category of its hammock localization admit a canonical equivalence
\[ \word{3}(x,y)^\gpd \xra{\sim} \left| \homhamR(x,y) \right| \]
from the groupoid completion of the $\infty$-category of three-arrow zigzags $x \lwe \bullet \ra \bullet \lwe y$ in $(\R,\bW)$.
\end{thm*}

\begin{thm*}[\ref{calculus result}]
Given a relative $\infty$-category $(\R,\bW)$, its \bit{Rezk nerve} \[ \NerveRezki(\R,\bW) \in s\S \]
\begin{itemizesmall}
\item is a \bit{Segal space} if $(\R,\bW)$ admits a homotopical three-arrow calculus, and
\item is moreover a \bit{complete Segal space} if moreover $(\R,\bW)$ is saturated and satisfies the two-out-of-three property.
\end{itemizesmall}
\end{thm*}

\noindent (The notion of a \textit{homotopical three-arrow calculus} is a minor variant on Dwyer--Kan's ``homotopy calculus of fractions'' (see \cref{define calculus}).  Meanwhile, the \textit{Rezk nerve} is a straightforward generalization of Rezk's ``classification diagram'' construction, which we introduced in \cite{MIC-rnerves} and proved computes the $\infty$-categorical localization (see \cite[Theorem 3.8 and Corollary 3.12]{MIC-rnerves}).)

\begin{rem}
In \cref{compare with barwick's theory of enriched infty-cats}, we show how our notion of ``$s\S$-enriched $\infty$-category'' fits with the corresponding notion coming from Barwick's theory of distributors.
\end{rem}

\begin{comment}

\begin{rem}\label{evidence that hammock localizn localizes rel infty-cats}
We do not introduce a simplicial localization functor in this paper, and so correspondingly do not prove that the hammock localization computes the localization of relative $\infty$-categories.\footnote{We expect that using our \cite[Theorem \ref{rnerves:Rezk nerve computes localization}]{MIC-rnerves}, one could also prove this using arguments analogous to those of \cite{BK-simploc}.}  
{\color{blue}
Of course, we nevertheless expect it to be true.  We provide the following evidence for this suspicion:
\begin{itemizesmall}
\item we prove that the functor $\R \ra \ham(\R,\bW)$ ``weakly inverts'' the subcategory $\bW \subset \R$ (see \cref{hammocks are invt under w.e.} {\color{magenta} and/or} \cref{und infty-cat of ham inverts weak equivces}), and
\item we prove that the maps
\[ \left| \homhamR(x,y) \right| \ra \hom_{\loc{\R}{\bW}}(x,y) \]
 are $\pi_0$-surjective (see \cref{rep maps in localizn of rel infty-cat by zigzags}).\footnote{\color{magenta} explicitly construct this map!}
\end{itemizesmall}
(Both of these are substantially subtler to demonstrate than their 1-categorical analogs.)  }Moreover, combining the two results stated above with \cite[Theorem \ref{rnerves:rezk nerve of a relative infty-category is initial} and Corollary \ref{rnerves:Rezk nerve computes localization}]{MIC-rnerves}, one can show that this latter map on hom-spaces is an equivalence when $(\R,\bW)$ admits a homotopical three-arrow calculus.
\end{rem}

\end{comment}

\begin{rem}\label{intro rem more details than DK}
Many of the original Dwyer--Kan definitions and proofs are quite point-set in nature.  However, when working $\infty$-categorically, it is essentially impossible to make such ad hoc constructions.  Thus, we have no choice but to be both much more careful and much more precise in our generalization of their work.\footnote{For example, our proof of \cref{calculus gives reduction} spans nearly four pages whereas the proof of \cite[Proposition 6.2(i)]{DKCalc} (which it generalizes) is just half a page long, and our proof of \cref{hammocks are invt under w.e.} is nearly three pages whereas the proof of \cite[Proposition 3.3]{DKCalc} (which it generalizes) is not even provided.}  We find Dwyer--Kan's facility with universal constructions (displayed in that proof and elsewhere) to be really quite impressive, and we hope that our elaboration on their techniques will be pedagogically useful.  Broadly speaking, our main technique is to \textit{corepresent} higher coherence data.
\end{rem}

\subsection{Conventions}

\partofMIC

\tableofcodenames

\examplecodename

\citelurie \ \luriecodenames

\butinvt \ \seeappendix

\subsection{Outline}

We now provide a more detailed outline of the contents of this paper.

\begin{itemize}
\item In \cref{section SsS and sS-enr infty-cats}, we introduce the $\infty$-category $\CatsS$ of $\infty$-categories enriched in simplicial spaces, as well as an auxiliary $\infty$-category $\CSS$ of \textit{Segal simplicial spaces}.  We endow both of these with subcategories of \textit{Dwyer--Kan weak equivalences}, and prove that the resulting relative $\infty$-categories both model the $\infty$-category $\Cati$ of $\infty$-categories.
\item In \cref{section zigzags and hammocks}, we define the $\infty$-categories of zigzags in a relative $\infty$-category $(\R,\bW)$ between two objects $x,y \in \R$, and use these to define the \textit{hammock simplicial spaces} $\homhamR(x,y)$, which will be the hom-simplicial spaces in the hammock localization $\ham(\R,\bW)$.
\item In \cref{section three-arrow calculi}, we define what it means for a relative $\infty$-category to admit a homotopical three arrow calculus, and we prove the first of the two results stated above.
\item In \cref{section hammocks}, we finally construct the hammock localization functor on relative $\infty$-categories, and we explore some of its basic features.
\item In \cref{section fractions redux}, we prove the second of the two results stated above.
\end{itemize}

\subsection{Acknowledgments}

We would like to thank David Ayala, Marc Hoyois, Tyler Lawson, and Zhen Lin Low for their helpful input.  We also gratefully acknowledge the financial support from the NSF graduate research fellowship program (grant DGE-1106400) provided during the time that this paper was written.

\section{Segal spaces, Segal simplicial spaces, and $s\S$-enriched $\infty$-categories}\label{section SsS and sS-enr infty-cats}

In this section, we develop the theory -- and the \textit{homotopy} theory -- of two closely related flavors of higher categories whose hom-objects lie in the symmetric monoidal $\infty$-category $(s\S,\times)$ of simplicial spaces equipped with the cartesian symmetric monoidal structure.  By ``homotopy theory'', we mean that we will endow the $\infty$-categories of these objects with \textit{relative $\infty$-category} structures, whose weak equivalences are created by ``local'' (i.e.\! hom-object-wise) geometric realization.  These therefore constitute ``many-object'' elaborations on the Kan--Quillen relative $\infty$-category $(s\S, \bW_\KQ)$, whose weak equivalences are created by geometric realization (see \cref{sspaces:kan--quillen model structure on sspaces}).  A key source of such objects will be the \textit{hammock localization} functor, which we will introduce in \cref{section hammocks}.

% Much of this material {\color{magenta} [explicitly \cref{def geom realizn of sS-cats} and \cref{preham is Segal}, but surely lots of other stuff implicitly as a result]} ultimately relies on the fact that {\color{blue} $\S$ and $s\S$} are $\infty$-topoi.  For a more general enriching $\infty$-category $\C$ {\color{blue} (than $s\S$)} equipped with the cartesian monoidal structure, one could presumably obtain analogous results by instead requiring ``$\C$-enriched $\infty$-categories'' to only have a \textit{set} of objects, at least when products in $\C$ commute with sifted colimits (see Proposition T.5.5.8.6), for instance if $\C$ is cartesian closed.  Alternatively, one could enhance the choice of $\C$ to the datum of a distributor in the sense of \cite[Definition 1.2.1]{LurieGoo} and then proceed using the framework given there.  More generally, \cite{GepHaug} provides a theory of $\infty$-categories enriched over an arbitrary monoidal $\infty$-category (i.e.\! one whose monoidal structure is not necessarily the cartesian one).

This section is organized as follows.
\begin{itemize}
\item In \cref{subsection segal spaces}, we recall some basic facts regarding \textit{Segal spaces}.
\item in \cref{subsection segal sspaces}, we introduce \textit{Segal simplicial spaces} and define the essential notions for ``doing (higher) category theory'' with them.
\item In \cref{subsection sS-enriched infty-cats}, we introduce their full (in fact, coreflective) subcategory of \textit{simplicio-spatially-enriched} (or simply \textit{$s\S$-enriched}) \textit{$\infty$-categories}.  These are useful since they can more directly be considered as ``presentations of $\infty$-categories''.
\item In \cref{subsection SsS and Cat_sS present infty-cats}, we prove that freely inverting the \textit{Dwyer--Kan weak equivalences} among either the Segal simplicial spaces or the $s\S$-enriched $\infty$-categories yields an $\infty$-category which is canonically equivalent to $\Cati$ itself.  We also contextualize both of these sorts of objects with respect to Barwick's theory of enriched $\infty$-categories, and provide some justification for our interest in them.
\end{itemize}

\subsection{Segal spaces}\label{subsection segal spaces}

We begin this section with the following recollections.  This subsection exists mainly in order to set the stage for the remainder of the section; we refer the reader seeking a more thorough discussion either to the original paper \cite{RezkCSS} (which uses model categories) or to \cite[\sec 1]{LurieGoo} (which uses $\infty$-categories).

\begin{defn}\label{define segal spaces}
The $\infty$-category of \bit{Segal spaces} is the full subcategory $\SS \subset s\S$ of those simplicial spaces satisfying the \textit{Segal condition}.  These sit in a left localization adjunction
\[ \begin{tikzcd}[column sep=2cm]
s\S \arrow[transform canvas={yshift=0.7ex}]{r}{\leftloc_\SS}[swap, transform canvas={yshift=0.2ex}]{\scriptstyle \bot} \arrow[transform canvas={yshift=-0.7ex}, hookleftarrow]{r}[swap]{\forget_\SS}
& \SS ,
\end{tikzcd} \]
which factors the left localization adjunction $\leftloc_\CSS \adj \forget_\CSS$ of \cref{rnerves:define CSSs} in the sense that we obtain a pair of composable left localization adjunctions
\[ \begin{tikzcd}[column sep=2cm]
s\S
\arrow[transform canvas={yshift=0.7ex}]{r}{\leftloc_\SS}[swap, transform canvas={yshift=0.2ex}]{\scriptstyle \bot} \arrow[transform canvas={yshift=-0.7ex}, hookleftarrow]{r}[swap]{\forget_\SS}
& \SS
\arrow[transform canvas={yshift=0.7ex}]{r}{\leftloc_\CSS}[swap, transform canvas={yshift=0.2ex}]{\scriptstyle \bot} \arrow[transform canvas={yshift=-0.7ex}, hookleftarrow]{r}[swap]{\forget_\CSS}
& \CSS .
\end{tikzcd} \]
(This follows easily from \cite[Theorems 7.1 and 7.2]{RezkCSS}, or alternatively more-or-less follows from \cite[Remark 1.2.11]{LurieGoo}.)
\end{defn}

In order to make a few basic observations, it will be convenient to first introduce the following.

\begin{defn}\label{define 0th coskel and pb of simp obj}
Suppose that $\C \in \Cati$ admits finite products.  Then, we define the \bit{$0\th$ coskeleton} of an object $c \in \C$ (or perhaps more standardly, of the corresponding constant simplicial object $\const(c) \in s\C$) to be the simplicial object selected by the composite
\[ \bD^{op} \hookra (s\Set)^{op} \xra{((-)_0)^{op}} \Set^{op} \hookra \S^{op} \xra{- \cotensoring c} \C . \]
This assembles to a functor
\[ \C \xra{(-)^{\times(\bullet+1)}} s\C \]
which, as the notation suggests, is given in degree $n$ by $c \mapsto c^{\times (n+1)}$.  This sits in an adjunction
\[ (-)_0 : s\C \adjarr \C : (-)^{\times(\bullet+1)} , \]
which we refer to as the \bit{$0\th$ coskeleton adjunction} for $\C$.  Using this, given a simplicial object $Z \in s\C$ and a map $Y \xra{\varphi} Z_0$ in $\C$, we define the \bit{pullback} of $Z$ along $\varphi$ to be the fiber product
\[ \varphi^*(Z) = \lim \left( \begin{tikzcd}[column sep=1.5cm]
& Z_\bullet \arrow{d} \\
Y^{\times(\bullet+1)} \arrow{r}[swap]{\varphi^{\times(\bullet + 1)}} & (Z_0)^{\times(\bullet+1)}
\end{tikzcd} \right) \]
in $s\C$, where the vertical map is the component at the object $Z \in s\C$ of the unit of the $0\th$ coskeleton adjunction.  In particular, note that we have a canonical equivalence $(\varphi^*(Z))_0 \simeq Y$ in $\C$.
\end{defn}

\begin{rem}\label{any SS pulled back from its CSS-localizn}
Suppose that $Y \in \SS$, and let us write $Y \xra{\lambda} \leftloc_\CSS(Y)$ for its localization map.  Then, the map $Y_0 \xra{\lambda_0} \leftloc_\CSS(Y)_0$ is a surjection, and moreover we have a canonical equivalence
\[ Y \simeq (\lambda_0)^*(\leftloc_\CSS(Y)) \]
in $\SS \subset s\S$.  (The first claim follows from \cite[Theorem 7.7 and Corollary 6.5]{RezkCSS}, while the second claim follows from combining \cite[Definition 1.2.12(b) and Theorem 1.2.13(2)]{LurieGoo} with the Segal condition for $Y \in s\S$.)  From here, it follows easily that we have an equivalence
\[ \SS \simeq \lim \left( \begin{tikzcd}
& \Fun^{\textup{surj}}([1],\Cati) \arrow{d}{s} \\
\S \arrow[hookrightarrow]{r} & \Cati
\end{tikzcd} \right) , \]
where $\Fun^{\textup{surj}}([1],\Cati) \subset \Fun([1],\Cati)$ denotes the full subcategory on those functors $[1] \ra \Cati$ that select surjective maps $\C \ra \D$.  From this viewpoint, the left localization $\leftloc_\CSS : \SS \ra \CSS$ is then just the composite functor
\[ \SS \hookra \Fun^{\textup{surj}}([1],\Cati) \xra{t} \Cati \xra[\sim]{\Nervei} \CSS . \]
Thus, one might think of $\SS$ as ``the $\infty$-category of surjectively marked $\infty$-categories'' (where by ``surjectively marked'' we of course mean ``equipped with a surjective map from an $\infty$-groupoid'').
\end{rem}

\begin{rem}\label{rem segal spaces vs strict cats}
Continuing with the observations of \cref{any SS pulled back from its CSS-localizn}, note that the category $\strcat$ of \textit{strict} 1-categories can be recovered as a limit
\[ \begin{tikzcd}[column sep=1.5cm, row sep=1.5cm]
\strcat \arrow{dd} \arrow{rrr} & & & \Cat \arrow[hook]{d} \\
& \SS \arrow[hook]{r} \arrow{d} & \Fun^{\textup{surj}}([1],\Cati) \arrow{r}[swap]{t} \arrow{d}{s} & \Cati \\
\Set \arrow[hook]{r} & \S \arrow[hook]{r} & \Cati
\end{tikzcd} \]
in $\Cati$ (in which the square is already a pullback).  (In fact, the induced map $\strcat \ra \SS$ itself fits into the defining pullback square
\[ \begin{tikzcd}[row sep=1.25cm, column sep=1.25cm]
\strcat \arrow{r} \arrow[hook]{d}[swap]{\Nerve} & \SS \arrow[hook]{d}{\forget_\SS} \\
s\Set \arrow[hook]{r} & s\S
\end{tikzcd} \]
in $\Cati$.)  We can therefore consider the $\infty$-category $\SS$ of Segal spaces as a close cousin of the 1-category $\strcat$ of strict categories, with the caveat that objects of $\strcat$ must be surjectively marked by a \textit{discrete} space.
\end{rem}

\begin{rem}\label{extract hom-spaces from a SS}
Suppose that $Y \in \SS$.  Then, we can compute hom-spaces in the $\infty$-category
\[ \C = \Nervei^{-1}(\leftloc_\CSS(Y)) \in \Cati \]
as follows.
Any pair of objects $x,y \in \C$ can be considered as defining a pair of points
\[ x,y \in \C^\simeq \simeq \Nervei(\C)_0 \simeq \leftloc_\CSS(Y)_0 . \]
Since the map $Y_0 \ra \leftloc_\CSS(Y)_0$ is a surjection, these admit lifts $\tilde{x},\tilde{y} \in Y_0$.  Then, we have a composite equivalence
\[ \hom_\C(x,y) \simeq \lim \left( \begin{tikzcd}
 & \Nervei(\C)_1 \arrow{d}{(s,t)} \\
 \pt_\S \arrow{r}[swap]{(x,y)} & \Nervei(\C)_0 \times \Nervei(\C)_0
\end{tikzcd} \right)
\simeq \lim \left( \begin{tikzcd}
 & Y_1 \arrow{d}{(s,t)} \\
 \pt_\S \arrow{r}[swap]{(\tilde{x},\tilde{y})} & Y_0 \times Y_0
\end{tikzcd} \right) \]
by Remarks \Cref{rnerves:extract hom-spaces from a CSS} \and \ref{any SS pulled back from its CSS-localizn}.  (In particular, we can compute the hom-space $\hom_\C(x,y)$ using \textit{any} choices of lifts $\tilde{x},\tilde{y} \in Y_0$.)
\end{rem}

\subsection{Segal simplicial spaces}\label{subsection segal sspaces}

We now turn from the $\S$-enriched context to the $s\S$-enriched context.

\begin{defn}\label{define Segal sspaces}
We define the $\infty$-category of \bit{Segal simplicial spaces} to be the full subcategory $\SsS \subset s(s\S)$ of those simplicial objects in $s\S$ which satisfy the Segal condition.  These sit in a left localization adjunction $s(s\S) \adjarr \SsS$ by the adjoint functor theorem (Corollary T.5.5.2.9).
\end{defn}

\begin{rem}\label{rem SsS as simplicial cats}
In light of \cref{rem segal spaces vs strict cats}, we can consider the $\infty$-category $\SsS$ of Segal simplicial spaces as being a homotopical analog of the 1-category $s\strcat = \Fun(\bD^{op},\strcat)$ of simplicial categories.  The subcategory $\strcat_{s\Set} \subset s\strcat$ of $s\Set$-enriched categories then corresponds to the full subcategory on those Segal simplicial spaces $\C_\bullet \in \SsS$ such that the ``levelwise $0\th$ space'' object $(\C_\bullet)_0 \in s\S$ is \textit{constant}.
\end{rem}

\begin{defn}\label{define space of objects and hom-sspaces}
For any $\C_\bullet \in \SsS$, we define the \bit{space of objects} of $\C_\bullet$ to be the space
\[ (\C_0)_0 \simeq \hom_{s\S}(\pt_{s\S},\C_0) \in \S , \]
and for any $x, y \in (\C_0)_0$, we define the \bit{hom-simplicial space} from $x$ to $y$ in $\C_\bullet$ to be the pullback
\[ \ul{\hom}_{\C_\bullet}(x,y) = \lim \left( \begin{tikzcd}
& \C_1 \arrow{d}{(s,t)} \\
\pt_{s\S} \arrow{r}[swap]{(x,y)} & \C_0 \times \C_0
\end{tikzcd} \right) \]
in $s\S$.  We refer to the points of the space
\[ \enrhom_{\C_\bullet}(x,y)_0 \simeq \hom_{s\S}(\pt_{s\S},\enrhom_{\C_\bullet}(x,y)) \]
simply as \bit{morphisms} from $x$ to $y$.  The various hom-simplicial spaces of $\C_\bullet$ admit associative composition maps
\[ \ul{\hom}_{\C_\bullet}(x_0,x_1) \times \cdots \times \ul{\hom}_{\C_\bullet}(x_{n-1},x_n) \xra{\chi^{\C_\bullet}_{x_0,\ldots,x_n}} \ul{\hom}_{\C_\bullet}(x_0,x_n) \]
in $s\S$, which are obtained as usual via the Segal conditions.  For any $x \in (\C_0)_0$ there is an evident \bit{identity morphism} from $x$ to itself, denoted $\ul{\id}_x \in \enrhom_{\C_\bullet}(x,x)_0$, which behaves as expected under these composition maps.
\end{defn}

\begin{defn}
Given any $\C_\bullet \in \SsS$ and any pair of objects $x,y \in (\C_0)_0$, we say that two morphisms
\[ \pt_{s\S} \rra \enrhom_{\C_\bullet}(x,y) \]
are \bit{simplicially homotopic} if the induced maps
\[ \pt_\S \rra | \enrhom_{\C_\bullet}(x,y)| \]
are equivalent (i.e.\! select points in the same path component of the target).  We then say that a morphism $f \in \enrhom_{\C_\bullet}(x,y)_0$ is a \bit{simplicial homotopy equivalence} if there exists a morphism $g \in \enrhom_{\C_\bullet}(y,x)_0$ such that the composite morphisms
\[ \chi^{\C_\bullet}_{x,y,x}(f,g) \in \enrhom_{\C_\bullet}(x,x) \]
and
\[ \chi^{\C_\bullet}_{y,x,y}(g,f) \in \enrhom_{\C_\bullet}(y,y) \]
are simplicially homotopic to the respective identity morphisms.
\end{defn}

Now, the objects of $\SsS$ will indeed be ``presentations of $\infty$-categories'', but maps between them which are not equivalences may nevertheless induce equivalences between the $\infty$-categories that they present.  We therefore introduce the following notion.

\begin{defn}\label{DK equivces in SsS}
A map $\C_\bullet \xra{\varphi_\bullet} \D_\bullet$ in $\SsS$ is called a \bit{Dwyer--Kan weak equivalence} if
\begin{itemize}
\item it is \bit{weakly fully faithful}, i.e.\! for all pairs of objects $x,y \in (\C_0)_0$ the induced map
\[ \left| \ul{\hom}_{\C_\bullet}(x,y) \right| \ra \left| \ul{\hom}_{\D_\bullet}(\varphi(x),\varphi(y)) \right| \]
is an equivalence in $\S$, and
\item it is \bit{weakly surjective}, i.e.\! the map
\[ \pi_0((\C_0)_0) \xra{\pi_0((\varphi_0)_0)} \pi_0((\D_0)_0) \]
is surjective up to the equivalence relation on $\pi_0((\D_0)_0)$ generated by simplicial homotopy equivalence.
\end{itemize}
Such morphisms define a subcategory $\bW_\DK \subset \SsS$ containing all the equivalences and satisfying the two-out-of-three property, and we denote the resulting relative $\infty$-category by $\SsS_\DK = (\SsS,\bW_\DK) \in \RelCati$.
\end{defn}

\begin{rem}
Via the evident functor $\strcat_{s\Set} \ra \SsS$ (recall \cref{rem SsS as simplicial cats}), the subcategory of Dwyer--Kan weak equivalences $\bW_\DK^{\strcatsup_{s\Set}} \subset \strcat_{s\Set}$ of \cref{subsection intro to hammocks} (i.e.\! the subcategory of weak equivalences for the Bergner model structure) is pulled back from the subcategory $\bW^\SsS_\DK \subset \SsS$.
\end{rem}

\subsection{$s\S$-enriched $\infty$-categories}\label{subsection sS-enriched infty-cats}

In light of the discussion of \cref{subsection segal sspaces}, the natural guess for the sense in which a Segal simplicial space should be considered as a ``presentation of an $\infty$-category'' is via the levelwise geometric realization functor
\[ s(s\S) \xra{s(|{-}|)} s\S . \]
However, this operation does not preserve Segal objects: taking fiber products of simplicial spaces does not generally commute with taking their geometric realizations.  On the other hand, these two operations \textit{do} commute when the common target of the cospan is constant.  Hence, it will be convenient to restrict our attention to the following special class of objects.

\begin{defn}\label{define sspatial infty-cats}
We define the $\infty$-category of \bit{simplicio-spatially-enriched $\infty$-categories}, or simply of \bit{$s\S$-enriched $\infty$-categories}, to be the full subcategory $\CatsS \subset \SsS$ on those objects $\C_\bullet \in \SsS \subset s(s\S)$ such that $\C_0 \in s\S$ is constant.  We write
\[ \CatsS \xhookra{\forget_\CatsS} \SsS \]
for the defining inclusion.  Restricting the subcategory $\bW^\SsS_\DK \subset \SsS$ of Dwyer--Kan weak equivalences along this inclusion, we obtain a relative $\infty$-category $(\CatsS)_\DK = ( \CatsS,\bW_\DK) \in \RelCati$ (which also has the two-out-of-three property).
\end{defn}

\begin{lem}\label{lw geom realizn of sS-enr infty-cats gives segal spaces}
There is a canonical factorization
\[ \begin{tikzcd}
\CatsS \arrow[hookrightarrow]{r}{\forget_\CatsS} \arrow[dashed]{rrrd} & \SsS \arrow[hookrightarrow]{r} & s(s\S) \arrow{r}{s(|{-}|)} & s\S \\
& & & \SS \arrow[hookrightarrow]{u}
\end{tikzcd} \]
of the restriction of the levelwise geometric realization functor
\[ s(s\S) \xra{s(|{-}|)} s\S \]
to the subcategory $\CatsS \subset s(s\S)$ of $s\S$-enriched $\infty$-categories.
\end{lem}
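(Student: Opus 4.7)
The plan is to verify that the levelwise geometric realization $s(|{-}|)(\C_\bullet) \in s\S$ satisfies the Segal condition, so that it lands in $\SS \subset s\S$. Since $\C_\bullet \in \SsS$ by hypothesis, the Segal condition in $s(s\S)$ already supplies, for each $n \geq 2$, an equivalence
\[ \C_n \xra{\sim} \C_1 \times_{\C_0} \C_1 \times_{\C_0} \cdots \times_{\C_0} \C_1 \]
in $s\S$. Applying $|{-}| : s\S \to \S$ to both sides yields $|\C_n| \simeq |\C_1 \times_{\C_0} \cdots \times_{\C_0} \C_1|$, so the problem reduces to showing that $|{-}|$ commutes with these iterated Segal pullbacks, i.e.\ that the canonical map
\[ |\C_1 \times_{\C_0} \cdots \times_{\C_0} \C_1| \xra{\sim} |\C_1| \times_{|\C_0|} \cdots \times_{|\C_0|} |\C_1| \]
is an equivalence in $\S$.

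The essential input is the hypothesis that $\C_0 \in s\S$ is constant: writing $Y := (\C_0)_0$, we have $\C_0 \simeq \const(Y)$ and $|\C_0| \simeq Y$. Each copy of $\C_1$ equipped with its source or target map lies naturally in the slice $s\S / \const(Y)$, so the entire Segal pullback is computed over a \emph{constant} base. Using the standard slice equivalence $s\S / \const(Y) \simeq s(\S/Y) \simeq \Fun(Y, s\S)$ — where the last step uses straightening for the $\infty$-topos $\S$ — such a pullback is reinterpreted as a family, parameterized by points of $Y$, of honest finite products in $s\S$. It therefore suffices to invoke the standard fact that $|{-}| : s\S \to \S$ preserves finite products; undoing the slice identifications on the $\S$-side then converts these pointwise products of realizations back into the desired fiber product over $Y = |\C_0|$ in $\S$.

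The main obstacle — really the heart of the lemma — is this compatibility of $|{-}|$ with pullbacks in $s\S$ over a constant base. This is precisely what forces the restriction from $\SsS$ to $\CatsS$, since for a general Segal simplicial space $\C_0$ need not be constant, in which case levelwise realization typically fails to preserve the Segal pullbacks. Once the compatibility is in hand, concatenating equivalences delivers the Segal condition for $s(|{-}|)(\C_\bullet)$ and hence the claimed factorization through $\SS \hookrightarrow s\S$.
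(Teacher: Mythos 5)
Your argument is correct and identifies the right pivotal hypothesis (constancy of $\C_0$), but it packages the crucial input differently from the paper. The paper's proof is a one-liner: it cites Lurie's Lemma A.5.5.6.17 applied to the $\infty$-topos $\S$ (a general statement about geometric realization interacting with pullbacks in an $\infty$-topos), together with the commutation of coproducts with connected limits. You instead pass to the slice $s\S/\const(Y)$, straighten it to $\Fun(Y,s\S)$, observe that the Segal pullbacks become pointwise finite products there, and invoke the fact that $\left|{-}\right| : s\S \to \S$ preserves finite products (which follows from siftedness of $\bD^{op}$ together with the universality of colimits in $\S$). This is a more explicit and arguably more pedagogically transparent route; what you give up is brevity, and what the paper gains by citing the Lurie lemma you recoup by reducing to the single, widely-known product-preservation fact. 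One minor imprecision worth tightening: for the iterated Segal pullback $\C_1 \times_{\C_0} \cdots \times_{\C_0} \C_1$ the cospan legs are the source \emph{and} target maps, so the natural base to slice over is $\const(Y)^{\times(n-1)} \simeq \const(Y^{n-1})$ (or $\const(Y^{n+1})$ via the $0$th coskeleton), not a single copy of $\const(Y)$; this does not break the argument since constancy is preserved under finite products, but the phrasing ``each copy of $\C_1$ \ldots lies naturally in the slice $s\S/\const(Y)$'' glosses over the fact that the middle copies of $\C_1$ carry \emph{two} structure maps to $\C_0$ and hence live over $\const(Y\times Y)$.
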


\begin{proof}
This follows from Lemma A.5.5.6.17 (applied to the $\infty$-topos $\S$) and the fact that coproducts commute with connected limits.
\end{proof}

\begin{defn}\label{def geom realizn of sS-cats}
We denote simply by
\[ \CatsS \xra{|{-}|} \SS \]
the factorization of \cref{lw geom realizn of sS-enr infty-cats gives segal spaces}, and refer to it as the \bit{geometric realization} functor on $s\S$-enriched $\infty$-categories.
\end{defn}

\begin{defn}\label{def constant sS-enr infty-cat}
The composite inclusion
\[ \Cati \xra[\sim]{\Nervei} \CSS \xhookra{\forget_\CSS} s(\S) \xhookra{s(\const)} s(s\S) \]
clearly factors through the subcategory $\CatsS \subset \SsS \subset s(s\S)$.
We simply write
\[ \Cati \xra{\const} \CatsS \]
for this factorization, and refer to it as the \bit{constant $s\S$-enriched $\infty$-category} functor.  Thus, for an $\infty$-category $\C \in \Cati$, the simplicial object
\[ \const(\C)_\bullet \in \CatsS \subset s(s\S) \]
is given in degree $n$ by
\[ \const(\Nervei(\C)_n) \in s\S , \]
the constant simplicial space on the object
\[ \Nervei(\C)_n = \hom_\Cati([n],\C) \in \S . \]
This functor clearly participates in a commutative diagram
\[ \begin{tikzcd}
\Cati \arrow{r}{\const} \arrow{rrd}[sloped, pos=0.45]{\sim}[sloped, swap, pos=0.62]{\Nervei} & \CatsS \arrow{r}{|{-}|} & \SS \\
& & \CSS \arrow[hook]{u}[swap]{\forget_\CSS}
\end{tikzcd} \]
in $\Cati$.
\end{defn}

\begin{rem}\label{any SsS is DK-equivt to a sS-cat}
Suppose we are given a Segal simplicial space $\C_\bullet \in \SsS$ and a map $Z \xra{\varphi} (\C_0)_0$ in $\S$ to its space of objects.  Then, the canonical map
\[ \varphi^*(\C_\bullet) \ra \C_\bullet \]
is \textit{fully faithful} (in the \textit{$s\S$-enriched} sense): for any objects $x,y \in Z \simeq (\varphi^*(\C_\bullet)_0)_0$, the induced map
\[ \enrhom_{\varphi^*(\C_\bullet)}(x,y) \ra \enrhom_{\C_\bullet}(\varphi(x),\varphi(y)) \]
is already an equivalence in $s\S$ (instead of just being an equivalence upon geometric realization).  Of course, the map $\varphi^*(\C_\bullet) \ra \C_\bullet$ is therefore in particular weakly fully faithful as well.  As we can always choose our original map $Z \xra{\varphi} (\C_0)_0$ so that the induced map $\varphi^*(\C_\bullet) \ra \C_\bullet$ is additionally weakly surjective (e.g.\! by taking $\varphi$ to be a surjection), it follows that any Segal simplicial space admits a Dwyer--Kan weak equivalence from a $s\S$-enriched category; indeed, we can even arrange to have $Z \in \Set \subset \S$.
\end{rem}

Improving on \cref{any SsS is DK-equivt to a sS-cat}, we now describe a \textit{universal} way of extracting a $s\S$-enriched $\infty$-category from a Segal simplicial space.

\begin{defn}\label{spatialization}
We define the \bit{spatialization} functor $\spat : \SsS \ra \CatsS$ as follows.\footnote{The word ``spatialization'' is meant to indicate that the $0\th$ object of its output will lie in the subcategory $\S \subset s\S$ of constant simplicial spaces.}  Any $\C_\bullet \in \SsS$ gives rise to a natural map
\[ \const((\C_0)_0) \xra{\varepsilon} \C_0 \]
in $s\S$, the component at $C_0 \in s\S$ of the counit of the right localization adjunction $\const : \S \adjarr s\S : \lim$.  The spatialization of $\C_\bullet$ is then the pullback
\[ \spat(\C_\bullet) = \varepsilon^*(\C_\bullet) . \]
(Note that the fiber product of \cref{define 0th coskel and pb of simp obj} that yields this pullback may be equivalently taken either in $\SsS$ or in $s(s\S)$, in light of the left localization adjunction of \cref{define Segal sspaces}.)  This clearly assembles to a functor, and in fact it is not hard to see that this participates in a right localization adjunction
\[ \begin{tikzcd}[column sep=2cm]
\CatsS \arrow[hook, transform canvas={yshift=0.7ex}]{r}{\forget_\CatsS}[swap, transform canvas={yshift=0.2ex}]{\scriptstyle \bot} \arrow[leftarrow, transform canvas={yshift=-0.7ex}]{r}[swap]{\spat}
& \SsS ,
\end{tikzcd} \]
whose counit components $\spat(\C_\bullet) \ra \C_\bullet$ are Dwyer--Kan weak equivalences (which are even fully faithful as in \cref{any SsS is DK-equivt to a sS-cat}).
\end{defn}

\subsection{$\SsS$ and $\CatsS$ as presentations of $\Cati$}\label{subsection SsS and Cat_sS present infty-cats}

The following pair of results asserts that both $s\S$-enriched $\infty$-categories and Segal simplicial spaces, equipped with their respective subcategories of Dwyer--Kan weak equivalences, present the $\infty$-category of $\infty$-categories.

\begin{prop}\label{sS-enriched cats model infty-cats}
The composite functor
\[ \CatsS \xra{|{-}|} \SS \xra{\leftloc_\CSS} \CSS \simeq \Cati \]
induces an equivalence
\[ \loc{\CatsS}{\bW_\DK} \xra{\sim} \CSS \simeq \Cati . \]
\end{prop}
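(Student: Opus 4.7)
The plan is to produce inverse equivalences $F : \loc{\CatsS}{\bW_\DK} \rightleftarrows \Cati : G$, where $F$ descends from the composite in the statement and $G$ is induced by the constant functor $\const : \Cati \to \CatsS$ of \cref{def constant sS-enr infty-cat}. First I would verify that the composite inverts Dwyer--Kan weak equivalences. For a DK weak equivalence $\C_\bullet \xra{\varphi} \D_\bullet$, the hom-space extraction formula of \cref{extract hom-spaces from a SS} applied to $|\C_\bullet|, |\D_\bullet| \in \SS$, together with \cref{lw geom realizn of sS-enr infty-cats gives segal spaces} (that levelwise geometric realization commutes with the pullbacks defining the hom-simplicial-spaces), identifies the induced maps on hom-spaces of the underlying $\infty$-categories with $|\ul{\hom}_{\C_\bullet}(x,y)| \to |\ul{\hom}_{\D_\bullet}(\varphi x, \varphi y)|$; these are equivalences by weak fully faithfulness. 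Essential surjectivity of the underlying functor of $\infty$-categories then follows from weak surjectivity, using that simplicially homotopic morphisms in $\C_\bullet$ map to equivalent morphisms in the underlying $\infty$-category. The commutative diagram at the end of \cref{def constant sS-enr infty-cat} then immediately gives $F \circ G \simeq \id_{\Cati}$.

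The main obstacle is establishing $G \circ F \simeq \id_{\loc{\CatsS}{\bW_\DK}}$, i.e., producing for each $\C_\bullet \in \CatsS$ a natural zigzag of DK equivalences between $\C_\bullet$ and $\const(F(\C_\bullet))$. My plan is to reduce this to the analogous statement for Segal simplicial spaces via the spatialization adjunction of \cref{spatialization}: since its counit is a (fully faithful) DK equivalence, both $\forget_\CatsS$ and $\spat$ are relative functors and they induce mutually inverse equivalences $\loc{\CatsS}{\bW_\DK} \simeq \loc{\SsS}{\bW_\DK}$. For $\SsS$, I would then exploit \cref{any SS pulled back from its CSS-localizn}: writing $\lambda : |\C_\bullet| \to \leftloc_\CSS|\C_\bullet|$ for the CSS-localization (a surjection on $0$-spaces), the canonical identification $|\C_\bullet| \simeq (\lambda_0)^*(\leftloc_\CSS|\C_\bullet|)$ in $\SS$ lifts through the $0\th$ coskeleton adjunction of \cref{define 0th coskel and pb of simp obj} to furnish a natural zigzag of DK equivalences in $\SsS$ connecting $\forget_\CatsS(\C_\bullet)$ to the constant Segal simplicial object on $\leftloc_\CSS|\C_\bullet|$; transporting back across the spatialization equivalence delivers the desired zigzag in $\CatsS$.

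The hardest step will be arranging the naturality of this zigzag in $\C_\bullet$, since the pullback construction of \cref{define 0th coskel and pb of simp obj} must be organized into a coherent functor out of $\CatsS$ (or $\SsS$). The Segal simplicial space formulation makes this comparatively tractable: the constant Segal simplicial object on $\leftloc_\CSS|\C_\bullet|$ and the pullback along $\lambda_0$ are each manifestly functorial, and their comparison to $\C_\bullet$ can be organized as a single natural transformation of endofunctors on $\SsS$ whose components are DK equivalences. Once this is in place, applying the universal property of the localization $\SsS \to \loc{\SsS}{\bW_\DK}$ completes the proof.
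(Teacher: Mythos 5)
Your proposal is correct in outline, but it takes a noticeably more circuitous route than the paper. The paper's proof never constructs an inverse by hand: it assembles the composite $\CatsS \xra{|{-}|} \SS \xra{\leftloc_\CSS} \CSS$ as the left adjoint of a left localization adjunction (with right adjoint $s(\const) \circ \forget_\CSS$, which is precisely $\const$ in the notation of \cref{def constant sS-enr infty-cat}, and which lands in $\CatsS$ because a constant simplicial space is constant), observes that by design this left adjoint \emph{creates} $\bW_\DK$, and cites the general result \cref{rnerves:ex left localization qua free localization} that a left localization adjunction whose left adjoint creates the weak equivalences induces an equivalence on the localization. Your argument essentially reproves that general lemma in this special case: your verification that the composite inverts $\bW_\DK$ is the ``creates'' half, your $F \circ G \simeq \id$ is the counit being an equivalence, and your $G \circ F \simeq \id$ is the assertion that the unit is componentwise in $\bW_\DK$.

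Where you overwork the problem is the $G \circ F$ step. You propose to first pass from $\CatsS$ to $\SsS$ via spatialization (this is \cref{spatializn adjn induces equivce on localizns}, which the paper proves \emph{after} this proposition -- avoidable here), then to build a natural zigzag using \cref{any SS pulled back from its CSS-localizn} and the $0$th coskeleton pullback of \cref{define 0th coskel and pb of simp obj}, and finally to worry about the naturality of that zigzag. None of that machinery is needed: the comparison map $\C_\bullet \to \const(\leftloc_\CSS |\C_\bullet|)$ is exactly the unit of the adjunction $|{-}| \dashv s(\const)\forget_\CSS$, hence is already a single natural transformation in $\Fun(\CatsS,\CatsS)$ -- no zigzag, no coherence issues to organize. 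That its components lie in $\bW_\DK$ follows from the triangle identity together with the fact that the left adjoint both creates $\bW_\DK$ and has the counit an equivalence. (Your intermediate pullback $(\lambda_0)^*(\const(\leftloc_\CSS|\C_\bullet|))$ in fact already lives in $\CatsS$, so the spatialization detour buys nothing even on your own terms.) Both approaches do succeed; yours is more elementary in the sense that it unwinds the cited lemma, but the extra layers you introduce create work (naturality of a zigzag built from a pullback) that the unit of the adjunction discharges for free.
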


\begin{proof}
So far, we have obtained the solid diagram
\[ \begin{tikzcd}
s(s\S)
{\arrow[transform canvas={yshift=0.7ex}]{rr}{s(|{-}|)}[swap, transform canvas={yshift=0.25ex}]{\scriptstyle \bot} \arrow[transform canvas={yshift=-0.7ex}, hookleftarrow]{rr}[swap]{s(\const)}}
{\arrow[transform canvas={xshift=0.5ex, yshift=0.5ex}]{rd}[swap, sloped, pos=0.65, transform canvas={yshift=0.3ex}]{\scriptstyle \bot} \arrow[transform canvas={xshift=-0.5ex, yshift=-0.5ex}, hookleftarrow]{rd}}
& & s\S
{\arrow[transform canvas={xshift=0.5ex, yshift=0.5ex}]{rd}[sloped, pos=0.25]{\leftloc_\SS}[swap, sloped, pos=0.65, transform canvas={yshift=0.3ex}]{\scriptstyle \bot} \arrow[transform canvas={xshift=-0.5ex, yshift=-0.5ex}, hookleftarrow]{rd}[swap, sloped, pos=0.8]{\forget_\SS}}
\\
& \SsS & & \SS
{\arrow[transform canvas={xshift=0.5ex, yshift=0.5ex}]{rd}[sloped, pos=0.15]{\leftloc_\CSS}[swap, sloped, pos=0.65, transform canvas={yshift=0.3ex}]{\scriptstyle \bot} \arrow[transform canvas={xshift=-0.5ex, yshift=-0.5ex}, hookleftarrow]{rd}[swap, sloped, pos=0.85]{\forget_\CSS}}
\\
\CatsS
{\arrow[transform canvas={xshift=-0.5ex, yshift=0.5ex}, hookrightarrow]{ru}[swap, sloped, pos=0.35, transform canvas={yshift=0.3ex}]{\scriptstyle \bot} \arrow[transform canvas={xshift=0.5ex, yshift=-0.5ex}, leftarrow]{ru}[sloped, swap, pos=0.3]{\spat}}
{\arrow[transform canvas={xshift=-0.2ex, yshift=0.8ex}, bend right=20]{rrru}[sloped, pos=0.57]{|{-}|}[swap, sloped, pos=0.45, transform canvas={yshift=0.2ex}]{\scriptstyle \bot} \arrow[transform canvas={xshift=0.2ex, yshift=-0.8ex}, bend right=20, hookleftarrow, dashed]{rrru}[swap, sloped, pos=0.35]{s(\const)}}
& & & & \CSS .
\end{tikzcd} \]
The right adjoint of the composite left localization adjunction
\[ \begin{tikzcd}[column sep=1.5cm]
s(s\S) 
{\arrow[transform canvas={yshift=0.7ex}]{r}{s(|{-}|)}[swap, transform canvas={yshift=0.25ex}]{\scriptstyle \bot} \arrow[transform canvas={yshift=-0.7ex}, hookleftarrow]{r}[swap]{s(\const)}}
& s\S
{\arrow[transform canvas={yshift=0.7ex}]{r}{\leftloc_\SS}[swap, transform canvas={yshift=0.25ex}]{\scriptstyle \bot} \arrow[transform canvas={yshift=-0.7ex}, hookleftarrow]{r}[swap]{\forget_\SS}}
& \SS
\end{tikzcd} \]
clearly lands in the full subcategory $\CatsS \subset s(s\S)$, and hence restricts to give the right adjoint of a left localization adjunction as indicated by the dotted arrow above.  This composes to a left localization adjunction
\[ \begin{tikzcd}[column sep=1.5cm]
\CatsS
{\arrow[transform canvas={yshift=0.7ex}]{r}{|{-}|}[swap, transform canvas={yshift=0.25ex}]{\scriptstyle \bot} \arrow[transform canvas={yshift=-0.7ex}, hookleftarrow]{r}[swap]{s(\const)}}
& \SS
{\arrow[transform canvas={yshift=0.7ex}]{r}{\leftloc_\CSS}[swap, transform canvas={yshift=0.25ex}]{\scriptstyle \bot} \arrow[transform canvas={yshift=-0.7ex}, hookleftarrow]{r}[swap]{\forget_\CSS}}
& \CSS .
\end{tikzcd} \]
Moreover, the definition of Dwyer--Kan weak equivalence is precisely chosen so that the composite left adjoint creates the subcategory $\bW_\DK \subset \CatsS$.  Hence, by \cref{rnerves:ex left localization qua free localization}, it does indeed induce an equivalence
\[ \loc{\CatsS}{\bW_\DK} \xra{\sim} \CSS \simeq \Cati , \]
as desired.
\end{proof}

% {\color{magenta} \small [i should be able to conclude that actually \textit{all} functors in the diagram in the proof of \cref{sS-enriched cats model infty-cats} are weak equivalences of relative $\infty$-categories...or at least the inclusion $\SsS \hookra s(s\S)$, if not its left adjoint.]}

\begin{prop}\label{spatializn adjn induces equivce on localizns}
Both adjoints in the right localization adjunction
\[ \begin{tikzcd}[column sep=2cm]
\CatsS
{\arrow[transform canvas={yshift=0.7ex}, hookrightarrow]{r}{\forget_\CatsS}[swap, transform canvas={yshift=0.25ex}]{\scriptstyle \bot} \arrow[transform canvas={yshift=-0.7ex}, leftarrow]{r}[swap]{\spat}}
& \SsS
\end{tikzcd} \]
are functors of relative $\infty$-categories (with respect to their respective Dwyer--Kan relative structures), and moreover they induce inverse equivalences
\[ \loc{\CatsS}{(\bW^{\CatsS}_\DK)} \simeq \loc{\SsS}{(\bW^\SsS_\DK)} \]
in $\Cati$ on localizations.
\end{prop}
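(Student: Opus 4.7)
The plan is to verify separately that each of the two functors preserves Dwyer--Kan weak equivalences, and then to exploit the fact that the unit and counit of the adjunction each become pointwise equivalences upon passing to localizations.

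First I would check that both functors are relative. For $\forget_\CatsS$ this is tautological from \cref{define sspatial infty-cats}, in which $\bW_\DK^{\CatsS}$ is defined by pulling $\bW_\DK^{\SsS}$ back along $\forget_\CatsS$. For $\spat$, I would fix a morphism $\varphi : \C_\bullet \to \D_\bullet$ in $\bW_\DK^{\SsS}$ and contemplate the naturality square of the counit,
\[ \begin{tikzcd}
\spat(\C_\bullet) \arrow{r} \arrow{d}[swap]{\spat(\varphi)} & \C_\bullet \arrow{d}{\varphi} \\
\spat(\D_\bullet) \arrow{r} & \D_\bullet,
\end{tikzcd} \]
regarded as a diagram in $\SsS$. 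By \cref{spatialization} both horizontal arrows are Dwyer--Kan weak equivalences, so the two-out-of-three property of \cref{DK equivces in SsS} forces $\forget_\CatsS(\spat(\varphi))$ to lie in $\bW_\DK^{\SsS}$, and hence $\spat(\varphi)$ to lie in $\bW_\DK^{\CatsS}$.

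Next I would argue that, after localization, both the unit and the counit of the adjunction are pointwise equivalences. The unit $\id_{\CatsS} \Rightarrow \spat \circ \forget_\CatsS$ is already a natural equivalence at the level of $\CatsS$: for $\C_\bullet \in \CatsS$ the simplicial space $\C_0$ is constant by definition, so the $\const \dashv \lim$ counit $\const((\C_0)_0) \to \C_0$ is already an equivalence in $s\S$, whence $\spat(\C_\bullet) = \varepsilon^*(\C_\bullet) \simeq \C_\bullet$. Meanwhile the counit $\forget_\CatsS \circ \spat \Rightarrow \id_{\SsS}$ is pointwise a Dwyer--Kan weak equivalence by \cref{spatialization}, and hence becomes a natural equivalence once we pass to $\loc{\SsS}{\bW_\DK}$. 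Pushing the adjunction $\forget_\CatsS \dashv \spat$ through the universal property of localization will then yield an adjunction on localizations whose unit and counit are both natural equivalences, i.e.\! an adjoint equivalence.

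I expect the main subtlety to be the bookkeeping needed to assert that an $\infty$-categorical adjunction of relative functors descends cleanly to a (homotopy-coherent) adjunction on localizations. In the framework of this paper's companion works on relative $\infty$-categories I would expect this to be standard and available by citation, but in principle the argument reduces to applying the universal property of localization separately to each of the two functors, to the unit, and to the counit, and then invoking the uniqueness clause of that universal property to glue the resulting data into a coherent adjunction.
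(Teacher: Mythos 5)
Your proof is essentially the paper's proof: the tautological observation for $\forget_\CatsS$, the counit-naturality-square plus two-out-of-three argument for $\spat$, and the two observations that $\spat \circ \forget_\CatsS$ is (naturally equivalent to) the identity and that the counit $\forget_\CatsS \circ \spat \Rightarrow \id_{\SsS}$ is a levelwise Dwyer--Kan weak equivalence.

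The one place where you take a longer route than necessary is the final step. Rather than descending the entire adjunction $\forget_\CatsS \dashv \spat$ to a homotopy-coherent adjunction on localizations and then observing that its unit and counit become invertible, the paper just invokes the elementary fact that a \emph{natural weak equivalence} between two relative functors induces a natural equivalence between the induced functors on localizations (this is \cref{rnerves:nat w.e. induces equivce betw fctrs}, established in a prior installment of the series). Applied to the counit (one composite is weakly equivalent to $\id_{\SsS}$) and combined with the identity $\spat \circ \forget_\CatsS \simeq \id_{\CatsS}$, this immediately exhibits the induced functors as mutually inverse equivalences — no adjunction structure on the localizations needs to be produced or tracked. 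This sidesteps exactly the ``bookkeeping'' you flag as the main subtlety: you only need that the two round-trip composites localize to identities, not that the full triangle-identity data descends coherently.
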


\begin{proof}
The left adjoint inclusion is a functor of relative $\infty$-categories by definition.  On the other hand, suppose that $\C_\bullet \we \D_\bullet$ is a map in $\bW^\SsS_\DK \subset \SsS$.  Via the right localization adjunction, its spatialization fits into a commutative diagram
\[ \begin{tikzcd}
\spat(\C_\bullet) \arrow{r}{\approx} \arrow{d} & \C_\bullet \arrow{d}[sloped, anchor=south]{\approx} \\
\spat(\D_\bullet) \arrow{r}[swap]{\approx} & \D_\bullet
\end{tikzcd} \]
in $\SsS_\DK$, and hence is also in $\bW^\SsS_\DK \subset \SsS$ by the two-out-of-three property.  This shows that the right adjoint is also a functor of relative $\infty$-categories.

To see that these adjoints induce inverse equivalences on localizations, note that the composite
\[ \CatsS \xhookra{\forget_\CatsS} \SsS \xra{\spat} \CatsS \]
is the identity, while the composite
\[ \SsS \xra{\spat} \CatsS \xhookra{\forget_\CatsS} \SsS \]
admits a natural weak equivalence in $\SsS_\DK$ to the identity functor (namely, the counit of the adjunction).  Hence, the claim follows from \cref{rnerves:nat w.e. induces equivce betw fctrs}.
\end{proof}

% {\color{magenta} \bf [define ``underlying $\infty$-category'' functors on $\CatsS$ and $\SsS$ (justified by these results), and use them later.  maybe notation $|{-}|_\Cati$.  (add to Appendix B!)]}

To conclude this section, we make a pair of general remarks regarding $\SsS$ and $\CatsS$.  We begin by contextualizing these $\infty$-categories with respect to Barwick's theory of enriched $\infty$-categories, which is described in \cite[\sec 1]{LurieGoo}.

\begin{rem}\label{compare with barwick's theory of enriched infty-cats}
Barwick's theory of enriched $\infty$-categories -- which provides a satisfactory, compelling, and apparently complete picture (at least when the enriching $\infty$-category is equipped with the \textit{cartesian} symmetric monoidal structure) -- is premised on the notion of a \textit{distributor}, the data of which is simply an $\infty$-category $\Y$ equipped with a full subcategory $\X \subset \Y$ (see \cite[Definition 1.2.1]{LurieGoo}).\footnote{Note that there is a typo in \cite[Definition 1.2.1]{LurieGoo}: condition (4) should say that the functor $\X \ra (\Cati)^{op}$ preserves \textit{colimits}, not limits.  This is clear from \cite[Example 1.2.3]{LurieGoo} (see Lemma T.6.1.3.7 and Definition T.6.1.3.8).}  Given such a distributor, one can then define $\infty$-categories $\SS_{\X \subset \Y}$ and $\CSS_{\X \subset \Y}$ of \textit{Segal space objects} and of \textit{complete Segal space objects} with respect to it: these sit as full (in fact, reflective) subcategories
\[ \CSS_{\X \subset \Y} \subset \SS_{\X \subset \Y} \subset s\Y , \]
in which
\begin{itemizesmall}
\item the subcategory $\SS_{\X \subset \Y} \subset s\Y$ consists of those simplicial objects $Y_\bullet \in s\Y$ such that
\begin{itemizesmall}
\item $Y_\bullet$ satisfies the Segal condition and
\item $Y_0 \in \X$
\end{itemizesmall}
(see \cite[Definition 1.2.7]{LurieGoo}), while
\item the subcategory $\CSS_{\X \subset \Y} \subset \SS_{\X \subset \Y}$ consists of those objects which additionally satisfy a certain \textit{completeness} condition (see \cite[Definition 1.2.10]{LurieGoo}).
\end{itemizesmall}
Thus, $\Y$ plays the role of the ``enriching $\infty$-category'', i.e.\! the $\infty$-category containing the hom-objects in our enriched $\infty$-category, while its subcategory $\X \subset \Y$ provides a home for the ``object of objects'' of the enriched $\infty$-category.  As in the classical case -- indeed, the identity distributor $\S \subset \S$ simply has $\SS_{\S \subset \S} \simeq \SS$ and $\CSS_{\S \subset \S} \simeq \CSS$ --, one can already meaningfully extract an enriched $\infty$-category from a Segal space object, but it is only by restricting to the complete ones that one obtains the desired $\infty$-category of such.

Now, obviously we have
\[ \SsS \simeq \SS_{s\S \subset s\S} ,\]
as Segal simplicial spaces are nothing but Segal space objects with respect to the identity distributor $s\S \subset s\S$ on the $\infty$-category $s\S$ of simplicial spaces.  We can clearly also identify the $\infty$-category of $s\S$-enriched $\infty$-categories as
\[ \CatsS \simeq \SS_{\S \subset s\S} , \]
the Segal space objects with respect to the distributor $\S \subset s\S$ (the embedding of spaces as the constant simplicial spaces).\footnote{To see that the inclusion $\S \subset s\S$ of the full subcategory of constant objects is a distributor, note that if $\Y$ is an $\infty$-topos and $\X \subset \Y$ is a full subcategory which is stable under limits and colimits, then $\X \subset \Y$ is automatically a distributor.  The only remaining point is to verify condition (4) of \cite[Definition 1.2.1]{LurieGoo}.  The functor $\X \ra (\Cati)^{op}$ is given on objects by $x \mapsto (\Y_{/x})^\opobj$, with functoriality given by pullback in $\Y$.  This clearly factors as the composite $\X \hookra Y \ra (\Cati)^{op}$, in which the latter functor is similarly given by $y \mapsto (\Y_{/y})^\opobj$, which then preserves colimits by Proposition T.6.1.3.10 and Theorem T.6.1.3.9.}\footnote{In contrast with \cref{rem SsS as simplicial cats}, $s\S$-enriched $\infty$-categories do not quite have an analog in ordinary category theory, only in enriched category theory.  (It is only a coincidence of the special case presently under study that the two $\infty$-categories $\S$ and $s\S$ participating in the distributor appear to be so closely related.)}  On the other hand, the subcategory
\[ \CSS_{\S \subset s\S} \subset \SS_{\S \subset s\S} \simeq \CatsS \]
consists of those $s\S$-enriched $\infty$-categories $\C_\bullet \in \CatsS$ such that the ``levelwise $0\th$ space'' object $(\C_\bullet)_0 \in s\S$ is constant.
\end{rem}

We now explain the source of our interest in the $\infty$-categories $\SsS$ and $\CatsS$.

\begin{rem}\label{sS-enr infty-cats are segal space objects etc}
First and foremost, the reason we are interested in $\SsS$ is because this is the natural target of the ``pre-hammock localization'' functor
\[ \RelCati \xra{\preham} \SsS , \]
whose construction constitutes the main ingredient of the construction of the hammock localization functor itself (see \cref{section hammocks}).  On the other hand, we then restrict to the (coreflective) subcategory $\CatsS \subset \SsS$ since this appears to be the largest full subcategory of $\SsS \subset s(s\S)$ on which the levelwise geometric realization functor
\[ s(s\S) \xra{s(|{-}|)} s\S \]
(which is a colimit) preserves the Segal condition (which is defined in terms of limits), at least for purely formal reasons (recall (the proof of) \cref{lw geom realizn of sS-enr infty-cats gives segal spaces}).  Indeed, if our ``local geometric realization'' functor failed to preserve the Segal condition, it would necessarily destroy all ``category-ness'' inherent in our objects of study.  In turn, this would effectively invalidate our right to declare the hammock simplicial spaces
\[ \homhamR(x,y) \in s\S \]
(see \cref{define hammocks}) -- which will of course be the hom-simplicial spaces in the hammock localization $\ham(\R,\bW) \in \CatsS$ -- as ``presentations of hom-spaces'' in any reasonable sense.

For these reasons, Segal simplicial spaces are therefore not really our primary interest.  However, since for a Segal simplicial space $\C_\bullet \in \SsS$, the counit $\spat(\C_\bullet) \ra \C_\bullet$ of the spatialization right localization adjunction is actually fully faithful in the $s\S$-enriched sense, the hammock localization
\[ \ham(\R,\bW) = \spat ( \preham(\R,\bW)) \in \CatsS \]
will then simultaneously
\begin{itemizesmall}
\item have the hammock simplicial spaces as its hom-simplicial spaces, and
\item have composition maps which both
\begin{itemizesmall}
\item directly present composition in its geometric realization, and
\item manifestly encode the notion of ``concatenation of zigzags''.
\end{itemizesmall}
\end{itemizesmall}

Of course, it would also be possible to restrict further to the (reflective) subcategory
\[ \CSS_{\S \subset s\S} \subset \SS_{\S \subset s\S} \simeq \CatsS \]
of complete Segal space objects (recall \cref{compare with barwick's theory of enriched infty-cats}).  However, this is unnecessary for our purposes, since we have already proved that both the pre-hammock localization functor and the hammock localization functor
land in $\infty$-categories which admit canonical (Dwyer--Kan) relative structures via which they present the $\infty$-category $\Cati$, thus endowing these constructions with external meaning (which are of course compatible with each other in light of \cref{spatializn adjn induces equivce on localizns}).  Moreover, as the successive inclusions
\[ \CSS_{\S \subset s\S} \subset \SS_{\S \subset s\S} \simeq \CatsS \subset \SsS \]
respectively admit a \textit{left} adjoint and a \textit{right} adjoint, this further restriction would in all probability make for a somewhat messier story.
\end{rem}

\section{Zigzags and hammocks in relative $\infty$-categories}\label{section zigzags and hammocks}

In studying relative 1-categories and their 1-categorical localizations, one is naturally led to study \textit{zigzags}.  Given a relative category $(\R,\bW) \in \RelCat$ and a pair of objects $x,y \in \R$, a zigzag from $x$ to $y$ is a diagram of the form
\[ x \lwe \cdots \ra \cdots \lwe \cdots \ra \cdots \lwe y , \]
i.e.\! a sequence of both forwards and backwards morphisms in $\R$ (in arbitrary (finite) quantities and in any order) such that all backwards morphisms lie in $\bW \subset \R$.  Under the localization $\R \ra \R[\bW^{-1}]$, such a diagram is taken to a sequence of morphisms such that all backwards maps are \textit{isomorphisms}, so that it is in effect just a sequence of composable (forwards) arrows.  Taking their composite, we obtain a single morphism $x \ra y$ in the 1-categorical localization $\R[\bW^{-1}]$.  In fact, one can explicitly construct $\R[\bW^{-1}]$ in such a way that \textit{all} of its morphisms arise from this procedure.

It is a good deal more subtle to show, but in fact the same is true of relative \textit{$\infty$-categories} and their ($\infty$-categorical) localizations: given a relative $\infty$-category $(\R,\bW) \in \RelCati$, it turns out that every morphism in $\loc{\R}{\bW}$ can likewise be presented by a zigzag in $(\R,\bW)$ itself.  (We prove a precise statement of this assertion as \cref{rep maps in localizn of rel infty-cat by zigzags}.)

The representation of a morphism in $\loc{\R}{\bW}$ by a zigzag in $(\R,\bW)$ is quite clearly overkill: many different zigzags in $(\R,\bW)$ will present the same morphism in $\loc{\R}{\bW}$.  For example, we can consider a zigzag as being selected by a morphism $\word{m} \ra (\R,\bW)$ of relative $\infty$-categories, where $\word{m} \in \RelCat \subset \RelCati$ is a \textit{zigzag type} which is determined by the shape of the zigzag in question; then, precomposition with a suitable morphism $\word{m}' \ra \word{m}$ of zigzag types will yield a composite $\word{m}' \ra \word{m} \ra (\R,\bW)$ which presents a canonically equivalent morphism in $\loc{\R}{\bW}$.  Thus, in order to obtain a closer approximation to $\hom_{\loc{\R}{\bW}}(x,y)$, we should take a \textit{colimit} of the various spaces of zigzags from $x$ to $y$ indexed over the category of zigzag types.

However, this colimit alone will still not generally capture all the redundancy inherent in the representation of morphisms in $\loc{\R}{\bW}$ by zigzags in $(\R,\bW)$.  Namely, a \textit{natural weak equivalence} between two zigzags of the same type (which fixes the endpoints) will, upon postcomposing to the localization $\R \ra \loc{\R}{\bW}$, yield a \textit{homotopy} between the morphisms presented by the respective zigzags.  Pursuing this observation, we are thus led to consider certain \textit{$\infty$-categories}, denoted $\word{m}(x,y)$ (for varying zigzag types $\word{m}$), whose objects are the $\word{m}$-shaped zigzags from $x$ to $y$ and whose morphisms are the natural weak equivalences (fixing $x$ and $y$) between them.

Finally, putting these two observations of redundancy together, we see that in order to approximate the hom-space $\hom_{\loc{\R}{\bW}}(x,y)$, we should be taking a colimit of the various $\infty$-categories $\word{m}(x,y)$ over the category of zigzag types.  In fact, rather than taking a colimit of these $\infty$-categories, we will take a colimit of their corresponding \textit{complete Segal spaces} (see \cref{rnerves:section CSSs}), not within the $\infty$-category $\CSS$ of such but rather within the larger ambient $\infty$-category $s\S$ in which it is definitionally contained; this, finally, will yield the \textit{hammock simplicial space} $\homhamR(x,y) \in s\S$, which (as the notation suggests) will be the hom-simplicial space in the hammock localization $\ham(\R,\bW) \in \CatsS$.\footnote{As the functor $\leftloc_\CSS : s\S \ra \CSS$ is left adjoint to the inclusion $\CSS \subset s\S$ and hence in particular commutes with colimits, its application to the hammock simplicial space will yield the aforementioned colimit of $\infty$-categories.  Moreover, since we are ultimately interested in hammock simplicial spaces for their geometric realizations, in view of \cref{rnerves:groupoid-completion of CSSs} we can consider this shift in ambient $\infty$-category merely as a technical convenience.  For instance, there is an evident explicit description of the constituent spaces in the hammock simplicial space (analogous to the 1-categorical case (see \cite[2.1]{DKCalc})).}

This section is organized as follows.
\begin{itemize}

\item In \cref{subsection doubly-ptd rel infty-cats}, we lay some groundwork regarding \textit{doubly-pointed} relative $\infty$-categories, which will allow us to efficiently corepresent our $\infty$-categories of zigzags.

\item In \cref{subsection zigzags}, we use this to define $\infty$-categories of zigzags in a relative $\infty$-category.

\item In \cref{subsection representing maps}, we prove a precise articulation of the assertion made above, that all morphisms in the localization $\loc{\R}{\bW}$ are represented by zigzags in $(\R,\bW)$.

\item In \cref{subsection hammock sspaces}, we finally define our hammock simplicial spaces and compare them with the hammock simplicial sets of Dwyer--Kan (in the special case of a relative 1-category).

\item In \cref{subsection gluing zigzags}, we assemble some technical results regarding zigzags in relative $\infty$-categories which will be useful later; notably, we prove that for a concatenation $[\word{m} ; \word{m}']$ of zigzag types, we can recover the $\infty$-category $[\word{m};\word{m}'] (x,y)$ via the \textit{two-sided Grothendieck construction} (see \cref{gr:define two-sided Gr}).

\end{itemize}

\subsection{Doubly-pointed relative $\infty$-categories}\label{subsection doubly-ptd rel infty-cats}

In this subsection, we make a number of auxiliary definitions which will streamline our discussion throughout the remainder of this paper.

\begin{defn}\label{define doubly-pointed relcats}
A \bit{doubly-pointed relative $\infty$-category} is a relative $\infty$-category $(\R,\bW)$ equipped with a map $\pt_\RelCati \sqcup \pt_\RelCati \ra \R$.  The two inclusions $\pt_\RelCati \hookra \pt_\RelCati \sqcup \pt_\RelCati$ select objects $s,t \in \R$, which we call the \bit{source} and the \bit{target}; we will sometimes subscript these to remove ambiguity, e.g.\! as $s_\R$ and $t_\R$.  These assemble into the evident $\infty$-category, which we denote by
\[ \RelCatip = (\RelCati)_{(\pt_\RelCati \sqcup \pt_\RelCati) /} . \]
Of course, there is a forgetful functor $\RelCatip \ra \RelCati$.  We will often implicitly consider a relative $\infty$-category $(\R,\bW)$ equipped with two chosen objects $x,y \in \R$ as a doubly-pointed relative $\infty$-category; on the other hand, we may also write $((\R,\bW),x,y) \in \RelCatip$ to be more explicit.  We write $\RelCatp \subset \RelCatip$ for the full subcategory of \bit{doubly-pointed relative categories}, i.e.\! of those doubly-pointed relative $\infty$-categories whose underlying $\infty$-category is a 1-category.
\end{defn}

\begin{notn}\label{define enrichment and tensoring of doubly-pointed relcats over relcats}
Recall from \cref{rnerves:define internal hom in relcats} that $\RelCati$ is a cartesian closed symmetric monoidal $\infty$-category.  With respect to this structure, $\RelCatip$ is enriched and tensored over $\RelCati$.  As for the enrichment, for any $(\R_1\bW_1) , (\R_2,\bW_2) \in \RelCatip$, we define the object
\[ \left( \Funp(\R_1,\R_2)^\Rel , \Funp(\R_1,\R_2)^\bW \right) = \lim \left( \begin{tikzcd}
 & \left( \Fun(\R_1,\R_2)^\Rel , \Fun(\R_1,\R_2)^\bW \right) \arrow{d}{(\ev_{s_1} , \ev_{t_1})} \\
\{ (s_2,t_2) \} \arrow[hook]{r} & (\R_2,\bW_2) \times (\R_2,\bW_2)
\end{tikzcd} \right) \]
of $\RelCati$ (where we write $s_1,t_1 \in \R_1$ and $s_2,t_2 \in \R_2$ to distinguish between the source and target objects); informally, this should be thought of as the relative $\infty$-category whose objects are the doubly-pointed relative functors from $(\R_1,\bW_1)$ to $(\R_2,\bW_2)$, whose morphisms are the doubly-pointed natural transformations between these (i.e.\! those natural transformations whose components at $s_1$ and $t_1$ are $\id_{s_2}$ and $\id_{t_2}$, resp.), and whose weak equivalences are the doubly-pointed natural weak equivalences.  Then, the tensoring is obtained by taking $(\R,\bW) \in \RelCati$ and $(\R_1,\bW_1) \in \RelCatip$ to the pushout
\[ \colim \left( \begin{tikzcd}
\R \times \{s,t\} \arrow{r} \arrow{d} & \R \times \R_1 \\
\pt_\RelCati \times \{s,t\}
\end{tikzcd} \right) \]
in $\RelCati$, with its double-pointing given by the natural map from $\pt_\RelCati \sqcup \pt_\RelCati \simeq \pt_\RelCati \times \{ s, t\}$.  We will write
\[ \RelCatip \times \RelCati \xra{- \tensoring -} \RelCatip \]
to denote this tensoring.
\end{notn}

\begin{notn}\label{define maybe-pointed relcats}
In order to simultaneously refer to the situations of unpointed and doubly-pointed relative $\infty$-categories, we will use the notation $\RelCatimp$ (and similarly for other related notations).  When we use this notation, we will mean for the entire statement to be interpreted either in the unpointed context or the doubly-pointed context.
\end{notn}

\begin{notn}\label{notn for tensoring of either relcat or relcatp over relcat}
We will write
\[ \RelCatimp \times \RelCati \xra{- \tensoring -} \RelCatimp \]
to denote either the tensoring of \cref{define enrichment and tensoring of doubly-pointed relcats over relcats} in the doubly-pointed case or else simply the cartesian product in the unpointed case.
\end{notn}

\subsection{Zigzags in relative $\infty$-categories}\label{subsection zigzags}

In this subsection we introduce the first of the two key concepts of this section, namely the \textit{$\infty$-categories of zigzags} in a relative $\infty$-category between two given objects.

We begin by defining the objects which will corepresent our $\infty$-categories of zigzags.

\begin{defn}\label{define relative word}
We define a \bit{relative word} to be a (possibly empty) word $\word{m}$ in the symbols $\any$ (for ``any arbitrary arrow'') and $\bW^{-1}$.  We will write $\any^{\circ n}$ to denote $n$ consecutive copies of the symbol $\any$ (for any $n \geq 0$), and similarly for $(\bW^{-1})^{\circ n}$.  We can extract a doubly-pointed relative category from a relative word, which for our sanity we will carry out by reading \textit{forwards}. So for instance, the relative word $\word{m} = [ \any ; (\bW^{-1})^{\circ 2} ; \any^{\circ 2} ]$ defines the doubly-pointed relative category
\[ \begin{tikzcd}
s \arrow{r} & \bullet & \bullet \arrow{l}[swap]{\approx} & \bullet \arrow{l}[swap]{\approx} \arrow{r} & \bullet \arrow{r} & t.
\end{tikzcd} \]
We denote this object by $\word{m} \in \RelCatp$.  Thus, by convention, the empty relative word determines the terminal object $[\es] \simeq \pt_{\RelCatp} \in \RelCatp$ (which is the unique relative word determining a doubly-pointed relative category whose source and target objects are equivalent).  Restricting to the \textit{order-preserving} maps between relative words (with respect to the evident ordering on their objects, i.e.\! starting from $s$ and ending at $t$), we obtain a (non-full) subcategory $\Z \subset \RelCatp$ of \bit{zigzag types}.\footnote{Note that the objects of $\Z$ can in fact be considered as \textit{strict} doubly-pointed relative categories, and moreover $\Z$ itself can be considered as a \textit{strict} category.  However, as we will only use these objects in invariant manipulations, we will not need these observations.}\footnote{Omitting the terminal relative word from $\Z$ (and considering it as a strict category), we obtain the opposite of the indexing category $\II$ of \cite[4.1]{DKCalc}.  We prefer to include this terminal object: it is the unit object for a monoidal structure on $\Z$ given by concatenation, which will play a key role in the definition of the hammock localization (see \cref{main constrn for hammock localizn}).}\footnote{Note that an order-preserving map must lay each morphism $[\any]$ across some $[\any^{\circ m}]$ (for some $m \geq 0$), and must lay each morphism $[\bW^{-1}]$ across some $[(\bW^{-1})^{\circ n}]$ (for some $n \geq 0$).  In particular, it cannot lay a morphism $[\any]$ across a morphism $[\bW^{-1}]$ (or vice versa, of course).}  We will occasionally also use this same relative word notation with the symbol $\bW$, but the resulting doubly-pointed relative categories will not be objects of $\Z$.
\end{defn}

\begin{rem}\label{rel word concatenation is a pushout}
Let $\word{m},\word{m}' \in \Z \subset \RelCatp \subset \RelCatip$ be relative words.  Then, their concatenation can be characterized as a pushout
\[ \begin{tikzcd}
\pt_\RelCati \arrow{r}{s} \arrow{d}[swap]{t} & \word{m}' \arrow{d} \\
\word{m} \arrow{r} & {[\word{m};\word{m}']}
\end{tikzcd} \]
in $\RelCati$ (as well as in $\RelCat$).
\end{rem}

\begin{notn}
For any $\word{m} \in \Z$, we will write $|\word{m}|_\any \in \bbN$ to denote the number of times that $\any$ appears in $\word{m}$, and we will write $|\word{m}|_{\bW^{-1}} \in \bbN$ to denote the number of times that $\bW^{-1}$ appears in $\word{m}$.
\end{notn}

\begin{rem}
The localization functor
\[ \RelCati \xra{\locL} \Cati \]
acts on the subcategory $\Z \subset \RelCat \subset \RelCati$ of zigzag types as
\[ \locL(\word{m}) \simeq [ |\word{m}|_\any ] \in \bD \subset \Cat \subset \Cati : \]
in effect, it collapses all the copies of $[\bW^{-1}]$ and leaves the copies of $[\any]$ untouched.
\end{rem}

We now define the first of the two key concepts of this section, an analog of \cite[5.1]{DKCalc}.

\begin{defn}\label{define zigzags}
Given a relative $\infty$-category $(\R,\bW)$ equipped with two chosen objects $x,y \in \R$, and given a relative word $\word{m} \in \Z$, we define the $\infty$-category of \bit{zigzags} in $(\R,\bW)$ from $x$ to $y$ of type $\word{m}$ to be
\[ \word{m}_{(\R,\bW)}(x,y) = \Funp(\word{m},\R)^\bW . \]
If the relative $\infty$-category $(\R,\bW)$ is clear from context, we will simply write $\word{m}(x,y)$.
\end{defn}

\subsection{Representing maps in $\loc{\R}{\bW}$ by zigzags in $(\R,\bW)$}\label{subsection representing maps}

In this subsection, we take a digression to illustrate that our study of zigzags in relative $\infty$-categories is well-founded: roughly speaking, we show that any morphism in the localization of a relative $\infty$-category is represented by a zigzag in the relative $\infty$-category itself.  We will give the precise assertion as \cref{rep maps in localizn of rel infty-cat by zigzags}.  In order to state it, however, we first introduce the following terminology.

\begin{defn}\label{defn rep morphism in localizn by zigzag}
Let $(\R,\bW_\R)$ and $(\D,\bW_\D)$ be relative $\infty$-categories.  We will say that a morphism
\[ (\D,\bW_\D) \ra (\R,\bW_\R) \]
in $\RelCati$ \bit{represents} the morphism
\[ \loc{\D}{\bW_\D} \ra \loc{\R}{\bW_\R} \]
in $\Cati$ induced by the localization functor.  We will also say that it represents the morphism
\[ \ho(\loc{\D}{\bW_\D}) \ra \ho(\loc{\R}{\bW_\R}) \]
in $\Cat$ induced from the previous one by the homotopy category functor.  In a slight abuse of terminology, we will moreover say that a zigzag
\[ \word{m} \ra (\R,\bW_\R) \]
represents the composite
\[ [1] \ra \locL(\word{m}) \ra \loc{\R}{\bW_\R} \]
in $\Cati$, where the map $[1] \ra \locL(\word{m}) \simeq [ | \word{m}|_\any ]$ is given by $0 \mapsto 0$ and $1 \mapsto | \word{m}|_\any$ (i.e.\! it corepresents the operation of composition), and likewise for the morphism in the homotopy category $\ho(\loc{\R}{\bW_\R})$ of the localization selected by either three-fold composite in the commutative diagram
\[ \begin{tikzcd}[column sep=0.75cm]
{[1]} \arrow{rd} \\
& \locL(\word{m}) \arrow{r} \arrow{dd}[sloped, anchor=north]{\sim} & \loc{\R}{\bW_\R} \arrow{dd} \\
\ \\
& \ho(\locL(\word{m})) \arrow{r} & \ho(\loc{\R}{\bW_\R})
\end{tikzcd} \]
in $\Cati$.
\end{defn}

\begin{prop}\label{rep maps in localizn of rel infty-cat by zigzags}
Let $(\R,\bW) \in \RelCati$ be a relative $\infty$-category, and let $[1] \xra{F} \loc{\R}{\bW}$ be a functor selecting a morphism in its localization.  Then, for some relative word $\word{m} \in \Z$, there exists a zigzag $\word{m} \ra (\R,\bW)$ which represents $F$.
\end{prop}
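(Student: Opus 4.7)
The plan is to reduce the statement to one about the ordinary homotopy category $\ho(\loc{\R}{\bW})$, where the analog becomes the classical description of morphisms in a Gabriel--Zisman 1-categorical localization. The key reduction observation is that two functors $[1] \to \C$ into an $\infty$-category $\C$ are equivalent in $\Fun([1], \C)$ precisely when they agree (up to equivalence in $\C$) on the objects $0$ and $1$ and induce the same morphism in $\ho(\C)$. Since the localization functor $\R \to \loc{\R}{\bW}$ is essentially surjective on objects (as $\loc{\R}{\bW}$ can be constructed as a pushout in $\Cati$ that does not introduce new objects), I would pick lifts $x, y \in \R$ of $F(0), F(1) \in \loc{\R}{\bW}$. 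It then suffices to produce a zigzag $\word{m} \to (\R, \bW)$ from $x$ to $y$ whose induced morphism in $\ho(\loc{\R}{\bW})$ matches that of $F$.

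Next, I would observe that the homotopy category functor $\ho\colon \Cati \to \Cat$, being left adjoint to the full inclusion $\Cat \hookrightarrow \Cati$, preserves colimits; applied to the pushout construction of $\loc{\R}{\bW}$ inverting $\bW$, it yields a canonical isomorphism $\ho(\loc{\R}{\bW}) \cong \ho(\R)[\ho(\bW)^{-1}]$ with the classical Gabriel--Zisman localization of the 1-category $\ho(\R)$ at $\ho(\bW)$. By the explicit construction of the latter as a free category on morphisms of $\ho(\R)$ and formal inverses of $\ho(\bW)$-arrows (modulo the evident relations), every morphism in $\ho(\R)[\ho(\bW)^{-1}]$ is represented by a finite zigzag in $\ho(\R)$ whose backward arrows lie in $\ho(\bW)$.

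Representing $F$'s class in $\ho(\loc{\R}{\bW})$ by such a 1-categorical zigzag, I would lift each arrow arbitrarily to $\R$ (choosing backward-arrow lifts inside $\bW \subset \R$) and record its alternation pattern as a relative word $\word{m} \in \Z$; this yields a zigzag $\word{m} \to (\R, \bW)$ in $\RelCati$. By construction, its composite $[1] \to \locL(\word{m}) \to \loc{\R}{\bW}$ induces $F$'s morphism in $\ho(\loc{\R}{\bW})$, and hence is equivalent to $F$ as a functor $[1] \to \loc{\R}{\bW}$, as required by \cref{defn rep morphism in localizn by zigzag}. The main point to verify carefully is the compatibility of $\ho$ with the localization pushout; this is essentially formal from $\ho$ being a left adjoint, but does require identifying the $\infty$-categorical localization as a pushout of the expected form and recognizing the image under $\ho$ as the classical Gabriel--Zisman localization. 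The rest of the argument is bookkeeping on equivalence classes of arrows.
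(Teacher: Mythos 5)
Your proposal follows essentially the same strategy as the paper: pass to the homotopy 1-category, invoke the Gabriel--Zisman description of morphisms in a 1-categorical localization, lift the resulting zigzag back to $\R$ morphism-by-morphism (using that $\word{m}$ is built from $[1]$'s by pushouts along $\pt$'s, per \cref{rel word concatenation is a pushout}), and close by observing that two lifts of the same map $[1]\to\ho(\loc{\R}{\bW})$ must be equivalent. The only structural difference is that the paper organizes the reduction in three explicit stages (relative 1-cat with 1-cat localization; relative 1-cat with $\infty$-cat localization; general relative $\infty$-cat), whereas you collapse them by using $\ho(\loc{\R}{\bW}) \simeq \ho(\R)[\ho(\bW)^{-1}]$ directly --- which is fine and indeed what the paper's ``lower right diagonal map'' encodes.

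One small caution: your ``key reduction observation'' is false as a biconditional. In $\C = B\bbZ$, any two functors $[1]\to\C$ are equivalent in $\Fun([1],\C)$ (one can conjugate), yet they select distinct morphisms in $\ho(\C) = B\bbZ$. Only the sufficiency direction holds (agreement on endpoints plus equality in $\ho(\C)$ implies equivalence in $\Fun([1],\C)$, by filling the homotopy-commutative square), and that is all you use. The paper's \cref{space of lifts of a map in htpy cat is connected}, stated as a connectedness claim about the space of lifts of $[1]\to\ho(\C)$, is the cleaner and correct way to package exactly this direction; I'd recommend replacing the biconditional with that formulation.
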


We will prove \cref{rep maps in localizn of rel infty-cat by zigzags} in stages of increasing generality.  We begin by recalling that any morphism in the \textit{1-categorical} localization of a relative \textit{1-category} is represented by a zigzag.

\begin{lem}\label{rep maps in 1-cat localizn of rel 1-cat by zigzags}
Let $(\R,\bW) \in \RelCat$ be a relative 1-category, and let $[1] \xra{F} \R[\bW^{-1}]$ be a functor selecting a morphism in its 1-categorical localization.  Then, for some relative word $\word{m} \in \Z$, there exists a zigzag $\word{m} \ra (\R,\bW)$ which represents $F$.
\end{lem}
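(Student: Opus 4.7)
The plan is to invoke the explicit Gabriel--Zisman construction of the 1-categorical localization. Recall that $\R[\bW^{-1}]$ can be built by adjoining to $\R$ a formal inverse $w^{-1}$ for every $w \in \bW$, subject to the evident relations; concretely, its hom-sets are presented as quotients of the set of (finite) zigzags
\[ x \lwe \bullet \ra \bullet \lwe \cdots \ra y \]
by the equivalence relation generated by composition of consecutive same-direction arrows, cancellation $w \circ w^{-1} = \id$ and $w^{-1} \circ w = \id$, and insertion/removal of identities. In particular, every morphism in $\R[\bW^{-1}]$ is, essentially by construction, the image of some zigzag in $(\R,\bW)$.

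Carrying this out, given the functor $[1] \xra{F} \R[\bW^{-1}]$, let $x = F(0)$ and $y = F(1)$, and choose any representative zigzag for the morphism $F(0 \to 1) \in \hom_{\R[\bW^{-1}]}(x,y)$. The ordered sequence of forward and backward arrows in this representative determines a relative word $\word{m} \in \Z$, and the assignment of the constituent arrows and weak equivalences to actual morphisms of $\R$ (resp.\ actual elements of $\bW$) specifies a map of doubly-pointed relative categories $\word{m} \to ((\R,\bW),x,y)$, hence a zigzag in the sense of \cref{define zigzags}. By construction, the composite $[1] \to \locL(\word{m}) \to \R[\bW^{-1}]$ recovers $F$, so this zigzag represents $F$ in the sense of \cref{defn rep morphism in localizn by zigzag}.

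The only mildly subtle step is to justify that lifting $F$ to an actual zigzag is possible, i.e.\ that the presentation above is faithful enough: but this is immediate from the Gabriel--Zisman construction itself, which defines the hom-sets of $\R[\bW^{-1}]$ as quotients of sets of zigzags, so that a point of $\hom_{\R[\bW^{-1}]}(x,y)$ is by definition represented by a zigzag (a reference is \cite[Section I.1]{GZ}, or any standard treatment of calculi of fractions). No hypothesis beyond having a relative 1-category is needed, since we are not attempting to control the \emph{shape} of $\word{m}$ --- only its existence. The main obstacle is thus essentially notational: unwinding the definitions to match the Gabriel--Zisman construction against our ``zigzag-type'' formalism $\Z$.
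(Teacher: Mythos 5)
Your proof is correct and takes the same approach as the paper, which simply states that the result ``follows directly from the standard construction of the 1-categorical localization of a relative 1-category.'' You have just unpacked that one-line appeal by explicitly recalling the Gabriel--Zisman presentation of $\R[\bW^{-1}]$ via zigzags, which is exactly the ``standard construction'' being invoked.
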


\begin{proof}
This follows directly from the standard construction of the 1-categorical localization of a relative 1-category.
\end{proof}

\begin{rem}
\cref{rep maps in 1-cat localizn of rel 1-cat by zigzags} accounts for the fundamental role that zigzags play in the theory of relative categories and their 1-categorical localizations.  We can therefore view \cref{rep maps in localizn of rel infty-cat by zigzags} as asserting that zigzags play an analogous fundamental role in the theory of relative $\infty$-categories and their ($\infty$-categorical) localizations.
\end{rem}

\begin{rem}\label{diagram of rep maps in 1-cat localizn of rel 1-cat by zigzags}
We can view \cref{rep maps in 1-cat localizn of rel 1-cat by zigzags} as guaranteeing the existence of a diagram
\[ \begin{tikzcd}
& \word{m} \arrow[dashed]{r} \arrow[maps to]{d} & (\R,\bW) \arrow[maps to]{d} \\
& \ho(\locL(\word{m})) \arrow[dashed]{r} & {\R[\bW^{-1}]} \\
{[1]} \arrow{ru} \arrow{rru}[swap, sloped]{F}
\end{tikzcd} \]
for some relative word $\word{m} \in \Z$, in which
\begin{itemizesmall}
\item the upper dotted arrow is a morphism in $\RelCat \subset \RelCati$,
\item the lower dotted arrow is its image under the 1-categorical localization functor
\[ \RelCati \xra{\locL} \Cati \xra{\ho} \Cat , \]
and
\item the map $[1] \ra \ho(\locL(\word{m})) \simeq \ho([|\word{m}|_\any]) \simeq [ |\word{m}|_\any]$ is as in \cref{defn rep morphism in localizn by zigzag}.
\end{itemizesmall}
\end{rem}

With \cref{rep maps in 1-cat localizn of rel 1-cat by zigzags} recalled, we now move on to the case of \textit{$\infty$-categorical} localizations of relative 1-categories.

\begin{lem}\label{rep maps in infty-cat localizn of rel 1-cat by zigzags}
Let $(\R,\bW) \in \RelCat$ be a relative 1-category, and let $[1] \xra{F} \loc{\R}{\bW}$ be a functor selecting a morphism in its localization.  Then, for some relative word $\word{m} \in \Z$, there exists a zigzag $\word{m} \ra (\R,\bW)$ which represents $F$.
\end{lem}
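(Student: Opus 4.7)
The plan is to reduce to \cref{rep maps in 1-cat localizn of rel 1-cat by zigzags} by descending to homotopy categories. The crucial preliminary observation is that for any relative 1-category $(\R,\bW) \in \RelCat$, the canonical comparison functor
\[ \R[\bW^{-1}] \ra \ho(\loc{\R}{\bW}) \]
is an equivalence in $\Cat$. This is formal: each side is the initial 1-category receiving a functor from $\R$ that inverts $\bW$, since a functor from an $\infty$-category to a 1-category factors uniquely through its homotopy category, and hence any $\bW$-inverting functor $\R \ra \C$ with $\C \in \Cat$ factors uniquely through $\loc{\R}{\bW} \ra \ho(\loc{\R}{\bW})$.

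Granted this identification, the argument is short. Given $F : [1] \ra \loc{\R}{\bW}$, postcomposition with $\loc{\R}{\bW} \ra \ho(\loc{\R}{\bW}) \simeq \R[\bW^{-1}]$ produces a morphism $\bar{F} : [1] \ra \R[\bW^{-1}]$, to which \cref{rep maps in 1-cat localizn of rel 1-cat by zigzags} applies: there exist $\word{m} \in \Z$ and a zigzag $\word{m} \ra (\R,\bW)$ representing $\bar F$. Writing $G : [1] \ra \loc{\R}{\bW}$ for the composite
\[ [1] \ra \locL(\word{m}) \ra \loc{\R}{\bW} \]
produced by this same zigzag, the images of $G$ and $F$ in $\ho(\loc{\R}{\bW})$ agree by construction. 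Since hom-sets in the homotopy category are, tautologically, the path components of the corresponding hom-spaces in $\loc{\R}{\bW}$, this forces $G$ and $F$ to select points in the same component of $\hom_{\loc{\R}{\bW}}(F(0),F(1))$, hence to be equivalent as functors $[1] \ra \loc{\R}{\bW}$. Thus the chosen zigzag represents $F$ in the sense of \cref{defn rep morphism in localizn by zigzag}.

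The only delicate step is the equivalence $\R[\bW^{-1}] \simeq \ho(\loc{\R}{\bW})$; this is where all the real content sits, with the rest of the argument being bookkeeping. If one wishes to avoid the abstract universal-property argument sketched above, an alternative is to appeal to \cite{MIC-rnerves}, which identifies $\loc{\R}{\bW}$ with the $\infty$-category presented by the Rezk nerve of $(\R,\bW)$ and thereby affords a direct computation of $\pi_0$ of its hom-spaces.
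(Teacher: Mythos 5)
Your proof takes essentially the same route as the paper: descend to $\ho(\loc{\R}{\bW})\simeq\R[\bW^{-1}]$, apply the $1$-categorical representability result, and then lift the agreement back up. The paper simply cites the equivalence $\ho(\loc{\R}{\bW})\xra{\sim}\R[\bW^{-1}]$ from the companion paper, whereas you re-derive it via the universal property of $\ho$ against $1$-categories; both are fine.

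The one place where you are lighter than the paper is the final step, which you characterize as ``bookkeeping'' and as ``tautological.'' Strictly speaking, $G$ and $F$ do not a priori select points of the same hom-space: $G$ lands in $\hom_{\loc{\R}{\bW}}(G(0),G(1))$, and to compare this with $\hom_{\loc{\R}{\bW}}(F(0),F(1))$ one must choose equivalences $G(i)\simeq F(i)$ compatibly with the given identification in $\ho(\loc{\R}{\bW})$, and verify that the resulting homotopy class of map is well-defined. In a model-independent setting this is not quite a tautology; the paper isolates precisely this point into a short dedicated lemma (that for any $\infty$-category $\C$ and any $[1]\ra\ho(\C)$, the space of lifts $[1]\ra\C$ is connected), proved by first lifting the endpoint groupoid $[1]^\simeq$ and then using the defining property of $\ho(\C)$ on the remaining morphism. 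Your argument is morally correct, but if you want it to be airtight you should either include that lemma or otherwise explain why the comparison of hom-spaces along non-canonical object equivalences is harmless.
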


\begin{proof}
Recall from \cref{rnerves:loc and ho commute} that we have an equivalence $\ho(\loc{\R}{\bW}) \xra{\sim} \R[\bW^{-1}]$.  The resulting postcomposition
\[ [1] \xra{F} \loc{\R}{\bW} \ra \ho(\loc{\R}{\bW}) \xra{\sim} \R[\bW^{-1}] \]
of $F$ with the projection to the homotopy category selects a morphism in the 1-categorical localization $\R[\bW^{-1}]$.  Hence, by \cref{rep maps in 1-cat localizn of rel 1-cat by zigzags}, we obtain a diagram
\[ \begin{tikzcd}
& \word{m} \arrow{r} \arrow[maps to]{d} & (\R,\bW) \arrow[maps to]{d} \\
& \locL(\word{m}) \arrow[dashed]{r} \arrow{d}[sloped, anchor=north]{\sim} & \loc{\R}{\bW} \arrow{d} \\
& \ho(\locL(\word{m})) \arrow{r} & {\R[\bW^{-1}]} \\
{[1]} \arrow{ru} \arrow{rru} \arrow{ruu}
\end{tikzcd} \]
for some relative word $\word{m} \in \Z$, in which
\begin{itemizesmall}
\item the solid horizontal arrows are as in \cref{diagram of rep maps in 1-cat localizn of rel 1-cat by zigzags},
\item the upper map in $\RelCat \subset \RelCati$ induces the dotted map under the functor $\locL : \RelCati \ra \Cati$, so that
\item the (lower) square in $\Cati$ commutes.
\end{itemizesmall}
That the resulting composite
\[ [1] \ra \locL(\word{m}) \ra \loc{\R}{\bW} \]
is equivalent to the functor $[1] \xra{F} \loc{\R}{\bW}$ follows from \cref{space of lifts of a map in htpy cat is connected}.  Thus, in effect, we obtain a diagram
\[ \begin{tikzcd}
& \word{m} \arrow[dashed]{r} \arrow[maps to]{d} & (\R,\bW) \arrow[maps to]{d} \\
& \locL(\word{m}) \arrow[dashed]{r} & \loc{\R}{\bW} \\
{[1]} \arrow{ru} \arrow{rru}
\end{tikzcd} \]
analogous to the one in \cref{diagram of rep maps in 1-cat localizn of rel 1-cat by zigzags} (only with the 1-categorical localizations replaced by the $\infty$-categorical localizations), which proves the claim.
\end{proof}

\begin{lem}\label{space of lifts of a map in htpy cat is connected}
For any $\infty$-category $\C$ and any map $[1] \ra \ho(\C)$, the space of lifts
\[ \begin{tikzcd}
& \C \arrow{d} \\
{[1]} \arrow{r} \arrow[dashed]{ru} & \ho(\C)
\end{tikzcd} \]
is connected.
\end{lem}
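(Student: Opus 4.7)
The plan is to unpack the space of lifts as a homotopy fiber and identify it with a single path-component of a hom-space in $\C$. Let $\phi : [1] \to \ho(\C)$ denote the given map, which picks out objects $x = \phi(0)$ and $y = \phi(1)$ in $\ob(\ho(\C)) = \ob(\C)$ together with a morphism $[f] \in \hom_{\ho(\C)}(x,y) = \pi_0\hom_\C(x,y)$. By definition, the space of lifts in question is the homotopy fiber over $\phi$ of the postcomposition map
\[ \hom_{\Cati}([1], \C) \ra \hom_{\Cati}([1], \ho(\C)) , \]
where we implicitly regard $\ho(\C)$ as an $\infty$-category via its nerve.

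To compute this fiber, I would pull back along the source-target projection $\hom_{\Cati}(\{0,1\},-) \simeq (-)^\simeq \times (-)^\simeq$ to obtain a naturality square and iterate homotopy fibers over the point $(x,y)$ in the lower-right corner. On the top row this yields $\hom_\C(x,y)$ (since the hom-space in an $\infty$-category is by definition the homotopy fiber of the endpoints map out of its arrow-space), on the bottom row this yields $\hom_{\ho(\C)}(x,y)$ (a \emph{discrete} space, since $\ho(\C)$ is a $1$-category), and the induced map on fibers is --- essentially by the defining property of the homotopy category --- the canonical projection $\hom_\C(x,y) \to \pi_0\hom_\C(x,y)$.

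Chasing the resulting pasting diagram of homotopy fibers then exhibits the space of lifts as the homotopy fiber of $\hom_\C(x,y) \to \pi_0\hom_\C(x,y)$ over the point $[f]$. This fiber is tautologically the path-component of $\hom_\C(x,y)$ corresponding to $[f]$, which is connected by the very definition of $\pi_0$. The only real subtlety I anticipate lies in the second step: justifying that the homotopy fiber on the bottom row really is the \emph{discrete} set $\pi_0 \hom_\C(x,y)$, and that the comparison map on fibers is the evident quotient map. Both of these follow from standard features of the nerve of a $1$-category together with the fact that $\ho(\C)^\simeq$ models the underlying $1$-groupoid of $\ho(\C)$, so no new input beyond the construction of $\ho(\C)$ from $\C$ is required.
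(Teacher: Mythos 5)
Your strategy is close in spirit to the paper's, which also organizes the lift in two steps (first lift the two endpoints along the restriction to $\{0,1\}\simeq[1]^\simeq$, then lift the edge rel endpoints), but your key intermediate claim overshoots. Forming the naturality square
\[ \begin{tikzcd}
\Fun([1],\C)^\simeq \arrow{r} \arrow{d} & \Fun([1],\ho(\C))^\simeq \arrow{d} \\
\C^\simeq \times \C^\simeq \arrow{r} & \ho(\C)^\simeq \times \ho(\C)^\simeq
\end{tikzcd} \]
and taking vertical fibers does give a map $\hom_\C(x,y) \ra \pi_0\hom_\C(x,y)$, and the ``total fiber'' of the square computed in either order is the path component of $\hom_\C(x,y)$ containing $f$. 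But this total fiber is \emph{not} the space of lifts; rather, the space of lifts is the middle term of a fiber sequence whose fiber is that path component and whose \emph{base} is the fiber of the bottom horizontal map over $([x],[y])$, i.e.\! the space of lifts of the two endpoints. Your proposal implicitly treats that base as contractible, but in general the fiber of $\C^\simeq \ra \ho(\C)^\simeq$ is only $1$-connected, not contractible (its $\pi_2$ is $\pi_2(\C^\simeq)$, which need not vanish). So the conclusion ``the space of lifts \emph{is} the homotopy fiber of $\hom_\C(x,y) \ra \pi_0\hom_\C(x,y)$'' is false.

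The gap is fixable precisely because what the argument actually needs is only that the base of this fiber sequence be connected, not contractible; connectedness of both the fiber (a path component) and the base (the space of endpoint lifts) gives connectedness of the total space. Connectedness of the base is exactly what the paper establishes as its first step, by observing that $\C \ra \ho(\C)$ creates the subcategory $\C^\simeq \subset \C$. You should make that step explicit: verify that $\mathrm{fib}(\C^\simeq \ra \ho(\C)^\simeq)$ is connected (iso on $\pi_0$ and on $\pi_1$ both follow directly from the construction of $\ho(\C)$), and then state the conclusion as a fiber-sequence argument rather than as an identification of the space of lifts with a single path component.
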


\begin{proof}
Since the functor $\C \ra \ho(\C)$ creates the subcategory $\C^\simeq \subset \C$, there is a connected space of lifts of the maximal subgroupoid $\{ 0 , 1 \} \simeq [1]^\simeq \subset [1]$.  Then, in any solid commutative square
\[ \begin{tikzcd}
{[1]^\simeq} \arrow{r} \arrow[hook]{d} & \C \arrow{d} \\
{[1]} \arrow{r} \arrow[dashed]{ru} & \ho(\C)
\end{tikzcd} \]
there exists a connected space of dotted lifts by definition of the homotopy category.
\end{proof}

With \cref{rep maps in infty-cat localizn of rel 1-cat by zigzags} in hand, we now proceed to the fully general case of $\infty$-categorical localizations of relative \textit{$\infty$-categories}.

\begin{proof}[Proof of \cref{rep maps in localizn of rel infty-cat by zigzags}]
Observe that the morphism $(\R,\bW) \ra (\ho(\R),\ho(\bW))$ in $\RelCati$ induces a postcomposition
\[ [1] \xra{F} \loc{\R}{\bW} \ra \loc{\ho(\R)}{\ho(\bW)} \]
selecting a morphism in the $\infty$-categorical localization of the relative 1-category $(\ho(\R),\ho(\bW)) \in \RelCat$.  Hence, by \cref{rep maps in infty-cat localizn of rel 1-cat by zigzags}, we obtain a solid diagram
\[ \begin{tikzcd}[row sep=2cm, column sep=1.5cm]
& & & (\R,\bW) \arrow{ld} \arrow[maps to]{d} \\
& \word{m} \arrow{r} \arrow[maps to]{d} \arrow[dashed]{rru} & (\ho(\R),\ho(\bW)) & \loc{\R}{\bW} \arrow{ld} \arrow{d} \\
& \locL(\word{m}) \arrow{r} \arrow[dashed]{rru} & \loc{\ho(\R)}{\ho(\bW)} \arrow{d} & \ho(\loc{\R}{\bW}) \arrow{ld}[sloped]{\sim} \\
& & {\ho(\R)[\ho(\bW)^{-1}]} \\
{[1]} \arrow{rru} \arrow{ruu} \arrow{rruu}
\latearrow{crossing over, maps to}{2-3}{3-3}{}
\end{tikzcd} \]
for some relative word $\word{m} \in \Z$, in which
\begin{itemizesmall}
\item the lower right diagonal map is an equivalence by \cref{rnerves:loc and ho commute},
\item we moreover obtain the upper dotted arrow from \cref{rel word concatenation is a pushout} by induction, and
\item we define the lower dotted arrow to be its image under localization.
\end{itemizesmall}
Now, the resulting composite
\[ [1] \ra \locL(\word{m}) \ra \loc{\R}{\bW} \]
fits into a commutative diagram
\[ \begin{tikzcd}[row sep=1.5cm]
{[1]} \arrow{r} \arrow{d} & \locL(\word{m}) \arrow{r} \arrow{ld} & \loc{\R}{\bW} \arrow{d} \\
\loc{\ho(\R)}{\ho(\bW)} \arrow{r} & {\ho(\R)[\ho(\bW)^{-1}]} & \ho(\loc{\R}{\bW}) \arrow{l}{\sim}
\end{tikzcd} \]
in $\Cati$.  In particular, we have obtained a lift
\[ \begin{tikzcd}
& \loc{\R}{\bW} \arrow{d} \\
{[1]} \arrow[dashed]{ru} \arrow{r} & \ho(\loc{\R}{\bW})
\end{tikzcd} \]
of the composite
\[ [1] \xra{F} \loc{\R}{\bW} \ra \ho(\loc{\R}{\bW}) , \]
which must therefore be equivalent to $F$ itself by \cref{space of lifts of a map in htpy cat is connected}.  Thus, we obtain a diagram
\[ \begin{tikzcd}
& \word{m} \arrow[dashed]{r} \arrow[maps to]{d} & (\R,\bW) \arrow[maps to]{d} \\
& \locL(\word{m}) \arrow[dashed]{r} & \loc{\R}{\bW} \\
{[1]} \arrow{ru} \arrow{rru}
\end{tikzcd} \]
as in the proof of \cref{rep maps in infty-cat localizn of rel 1-cat by zigzags}, which proves the claim.
\end{proof}

Thus, zigzags play an important role not just in the theory of relative 1-categories and their 1-categorical localizations, but more generally in the theory of relative $\infty$-categories and their $\infty$-categorical localizations.

\subsection{Hammocks in relative $\infty$-categories}\label{subsection hammock sspaces}

For a general relative $\infty$-category $(\R,\bW)$, the representation of a morphism in $\loc{\R}{\bW}$ by a zigzag $\word{m} \ra (\R,\bW)$ guaranteed by \cref{rep maps in localizn of rel infty-cat by zigzags} is clearly far from unique.  Indeed, any morphism $\word{m}' \ra \word{m}$ in $\Z$ gives rise to a composite $\word{m}' \ra \word{m} \ra (\R,\bW)$ which presents the same morphism in $\loc{\R}{\bW}$: in other words, the morphisms in $\Z$ corepresent \textit{universal equivalence relations} between zigzags in relative $\infty$-categories (with respect to the morphisms that they represent upon localization).

In order to account for this over-representation, we are led to the following definition, the second of the two key concepts of this section, an analog of \cite[2.1]{DKCalc}.

\begin{defn}\label{define hammocks}
Suppose $(\R,\bW) \in \RelCati$, and suppose $x,y \in \R$.  We define the \bit{simplicial space of hammocks} (or alternatively the \bit{hammock simplicial space}) in $(\R,\bW)$ from $x$ to $y$ to be the colimit
\[ \homhamR ( x , y ) = \colim_{\word{m} \in \Z^{op}} \Nervei (\word{m}(x,y)) \in s\S . \]
\end{defn}

We will extend the hammock simplicial space construction further -- and in particular, justify its notation -- by constructing the \textit{hammock localization}
\[ \ham(\R,\bW) \in \CatsS \]
of $(\R,\bW)$ in \cref{section hammocks} (see \cref{justify hom notation}).

We now compare our hammock simplicial spaces of \cref{define hammocks} with Dwyer--Kan's classical hammock simplicial \textit{sets} (in relative 1-categories).

\begin{rem}\label{hammocks disagree}
Suppose that $(\R,\bW) \in \strrelcat$ is a relative category.  Then, \cite[Proposition 5.5]{DKCalc}, we have an identification
\[ \homhamdR(x,y) \cong \colim^{s\Set}_{\word{m} \in \Z^{op}} \Nerve ( \word{m}(x,y)) \]
of the classical simplicial \textit{set} of hammocks defined in \cite[2.1]{DKCalc} as an analogous colimit over the \textit{1-categorical} nerves of the categories of zigzags in $(\R,\bW)$ from $x$ to $y$.\footnote{It is not hard to see that the presence of the initial object $[\es]^\opobj \in \Z^{op}$ (which is what distinguishes this indexing category from $\II$) does not change this colimit.}  However, there are two reasons that this does not coincide with \cref{define hammocks}.
\begin{itemize}
\item The colimit computing $\homhamdR(x,y)$ is taken in the subcategory $s\Set \subset s\S$.  This inclusion (being a right adjoint) does not generally commute with colimits.
\item The functors $\strcat \xra{\Nerve} s\Set \hookra s\S$ and $\strcat \ra \Cati \xra{\Nervei} s\S$ do not generally agree, but are only related by a natural transformation
\[ \begin{tikzcd}
\strcat \arrow{r}{\Nerve}[swap, transform canvas={yshift=-1.2em}]{\LDarrow} \arrow{d} & s\Set \arrow[hook]{d}{\disc} \\
\Cati \arrow{r}[swap]{\Nervei} & s\S
\end{tikzcd} \]
in $\Fun(\strcat , s\S)$ (see \cref{rnerves:nerve vs nervei}).
\end{itemize}

On the other hand, these two constructions do at least participate in a diagram
\[ \begin{tikzcd}[column sep=2cm, row sep=2cm]
& s\Set \arrow[bend left]{rd}{\disc} \\
\Z^{op} \arrow[bend left]{ru}{\Nerve((-)(x,y))} \arrow[bend right=80, out=-80, in=-100]{rr}[transform canvas={yshift=4.5em}]{\Downarrow}[swap]{\Nervei((-)(x,y))} & & s\S
\end{tikzcd} \]
in $\Cati$, which induces a span
\[ \begin{tikzcd}[column sep=-1.5cm, row sep=1.5cm]
& \colim^{s\S}_{\word{m}^\opobj \in \Z^{op}} \disc(\Nerve(\word{m}(x,y))) \arrow{ld} \arrow{rd} \\
\homhamR(x,y) & & \disc \left( \homhamdR(x,y) \right)
% \colim^{s\S}_{\word{m}^\opobj \in \Z^{op}} \Nervei(\word{m}(x,y)) & & \disc \left( \colim^{s\Set}_{\word{m}^\opobj \in \Z^{op}} \Nerve(\word{m}(x,y)) \right)
\end{tikzcd} \]
in $s\S$.  We claim that this span lies in the subcategory $\bW_\KQ \subset s\S$, i.e.\! that it becomes an equivalence upon geometric realization; as we have a commutative triangle
\[ \begin{tikzcd}
s\Set \arrow[hook]{rr}{\disc} \arrow{rd}[sloped, swap, pos=0.65]{|{-}|} & & s\S \arrow{ld}[sloped, pos=0.65]{|{-}|} \\
& \S
\end{tikzcd} \]
in $\Cati$, this will imply that we have a canonical equivalence
\[ \left| \homhamR(x,y) \right| \simeq \left| \homhamdR(x,y) \right| \]
in $\S$.  We view this as a satisfactory state of affairs, since we are only ultimately interested in simplicial sets/spaces of hammocks as presentations of hom-spaces, anyways.

To see the claim, note first that since $|{-}| : s\S \ra \S$ is a left adjoint, it commutes with colimits, and so the left leg of the span lies in $\bW_\KQ$ by the fact that upon postcomposition with the geometric realization functor $|{-}| : s\S \ra \S$, the natural transformation
\[ \disc \circ \Nerve \ra \Nervei \]
in $\Fun(\strcat , s\S)$ becomes a natural equivalence
\[ |{-}| \circ \disc \circ \Nerve \ra |{-}| \circ \Nervei \]
in $\Fun(\strcat , \S)$ (again see \cref{rnerves:nerve vs nervei}).  By \cref{rnerves:groupoid-completion of CSSs}, these geometric realizations of colimits in $s\S$ both evaluate to
\[ \colim^\S_{\word{m}^\opobj \in \Z^{op}} \word{m}(x,y)^\gpd . \]

Now, in order to compute the geometric realization
\[ \left| \disc \left( \homhamdR(x,y) \right) \right| \simeq \left| \homhamdR(x,y) \right| , \]
we begin by observing that that the category $\Z$ has an evident Reedy structure, which one can verify has cofibrant constants, so that the dual Reedy structure on $\Z^{op}$ has fibrant constants.  Moreover, it is not hard to verify that the functor
\[ \Z^{op} \xra{\Nerve((-)(x,y))} s\Set \]
defines a cofibrant object of $\Fun(\Z^{op},s\Set_\KQ)_\Reedy$.  Hence, the colimit
\[ \homhamdR(x,y) \cong \colim^{s\Set}_{\word{m}^\opobj \in \Z^{op}} \Nerve(\word{m}(x,y)) \]
computes the homotopy colimit in $s\Set_\KQ$, i.e.\! the colimit of the composite
\[ \Z^{op} \xra{\Nerve((-)(x,y))} s\Set \xra{|{-}|} \loc{s\Set}{\bW_\KQ} \simeq \S . \]
The claim then follows from the string of equivalences
\[ |{-}| \circ \Nerve \simeq |{-}| \circ \disc \circ \Nerve \simeq |{-}| \circ \Nervei \simeq (-)^\gpd \]
in $\Fun(\strcat,\S)$ (again appealing to \cref{rnerves:groupoid-completion of CSSs}).
\end{rem}

\begin{rem}
Dwyer--Kan give a point-set definition of the hammock simplicial \textit{set} in \cite[2.1]{DKCalc}, and then prove it is isomorphic to the colimit indicated in \cref{hammocks disagree}.  However, working $\infty$-categorically, it is essentially impossible to make such an ad hoc definition.  Thus, we have simply \textit{defined} our hammock simplicial space as the colimit to which we would like it to be equivalent anyways.
\end{rem}

\subsection{Functoriality and gluing for zigzags}\label{subsection gluing zigzags}

In this subsection, we prove that $\infty$-categories of zigzags are suitably functorial for weak equivalences among source and target objects (see \cref{notation for half-doubly-pointed words}), and we use this to give a formula for an $\infty$-category of zigzags of type $[\word{m};\word{m}']$, the concatenation of two arbitrary relative words $\word{m} , \word{m}' \in \Z$ (see \cref{concatenated zigzags}).

Recall from \cref{rel word concatenation is a pushout} that concatenations of relative words compute pushouts in $\RelCati$.  This allows for inductive arguments, in which at each stage we freely adjoin a new morphism along either its source or its target.  For these, we will want to have a certain functoriality property for diagrams of this shape.  To describe it, let us first work in the special case of $\Cati$ (instead of $\RelCati$).  There, if for instance we have an $\infty$-category $\D'$ with a chosen object $d\in \D'$ and we use this to define a new $\infty$-category $\D$ as the pushout
\[ \begin{tikzcd}
\pt_\Cati \arrow{r}{t} \arrow{d}[swap]{d} & {[1]} \arrow{d} \\
\D' \arrow{r} & \D ,
\end{tikzcd} \]
then for any target $\infty$-category $\C$, the evaluation
\[ \Fun(\D,\C) \ra \Fun([1],\C) \xra{s} \C \]
will be a cartesian fibration by Corollary T.2.4.7.12 (applied to the functor $\Fun(\D',\C) \xra{\ev_d} \C$).  The following result is then an analog of this observation for \textit{relative} $\infty$-categories; note that there are now two types of ``freely adjoined morphisms'' we must consider.%\footnote{\color{blue} edit 8/17/2015: there used to be a two-out-of-three restriction on ``adding $\any$'' (and also for the analogous parts of \cref{ev half-doubly-pointed}), but i now can't see why that's necessary.  some old stuff is commented-out after the proof of \cref{ev at free target gives cocart fibn}.}

\begin{lem}\label{ev at free target gives cocart fibn}
Let $(\I',\bW_{\I'}) \in \RelCati$, choose any $i \in \I'$, and suppose we are given any $(\R,\bW_\R) \in \RelCati$.

\begin{enumerate}

\item\label{add W}

\begin{enumeratesub}

\item\label{add W at s} If we form the pushout
\[ \begin{tikzcd}
\pt \arrow{r}{s} \arrow{d}[swap]{i} & {[\bW]} \arrow{d} \\
(\I',\bW_{\I'}) \arrow{r} & (\I,\bW_\I)
\end{tikzcd} \]
in $\RelCati$, then the composite restriction
\[ \Fun(\I,\R)^\bW \ra \Fun([\bW],\R)^\bW \xra{t} \bW_\R \]
is a cocartesian fibration.

\item\label{add W at t} Dually, if we form the pushout
\[ \begin{tikzcd}
\pt \arrow{r}{t} \arrow{d}[swap]{i} & {[\bW]} \arrow{d} \\
(\I',\bW_{\I'}) \arrow{r} & (\I,\bW_\I)
\end{tikzcd} \]
in $\RelCati$, then the composite restriction
\[ \Fun(\I,\R)^\bW \ra \Fun([\bW],\R)^\bW \xra{s} \bW_\R \]
is a cartesian fibration.
\end{enumeratesub}

\item\label{add A}

% Suppose additionally that $(\R,\bW_\R) \in \RelCati$ has the two-out-of-three property {\color{red} \bf (?)}.

\begin{enumeratesub}

\item\label{add A at s} If we form the pushout
\[ \begin{tikzcd}
\pt \arrow{r}{s} \arrow{d}[swap]{i} & {[\any]} \arrow{d} \\
(\I',\bW_{\I'}) \arrow{r} & (\I,\bW_\I)
\end{tikzcd} \]
in $\RelCati$, then the composite restriction
\[ \Fun(\I,\R)^\bW \ra \Fun([\any],\R)^\bW \xra{t} \bW_\R \]
is a cocartesian fibration.

\item\label{add A at t} Dually, if we form the pushout
\[ \begin{tikzcd}
\pt \arrow{r}{t} \arrow{d}[swap]{i} & {[\any]} \arrow{d} \\
(\I',\bW_{\I'}) \arrow{r} & (\I,\bW_\I)
\end{tikzcd} \]
in $\RelCati$, then the composite restriction
\[ \Fun(\I,\R)^\bW \ra \Fun([\any],\R)^\bW \xra{s} \bW_\R \]
is a cartesian fibration.

\end{enumeratesub}
\end{enumerate}
\end{lem}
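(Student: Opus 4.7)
The plan is to reduce each of the four parts to the base case $(\I',\bW_{\I'}) = \pt$, in which $(\I,\bW_\I)$ becomes just $[\bW]$ or $[\any]$, and then to exhibit explicit (co)cartesian lifts by composition with the given morphism in the base.

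\textbf{Reduction.} Because $\RelCati$ is cartesian closed (\cref{rnerves:define internal hom in relcats}) and because the subcategory-of-weak-equivalences construction preserves limits, applying $\Fun(-,\R)^\bW$ to the defining pushout produces a pullback square in $\Cati$
\[
\Fun(\I,\R)^\bW \simeq \Fun(\I',\R)^\bW \times_{\bW_\R} \Fun([\bW],\R)^\bW
\]
(and analogously with $[\any]$ in place of $[\bW]$), where the right-hand map to $\bW_\R$ is evaluation at whichever of $s$ or $t$ is used as the gluing endpoint.  The restriction map whose fibration property we want then becomes the composition of this pullback's right projection with the corresponding evaluation from $\Fun([\bW],\R)^\bW$ to $\bW_\R$, and my strategy is to produce its (co)cartesian lifts ``coordinate-wise'': keep the first factor $F' \in \Fun(\I',\R)^\bW$ constant via its identity natural transformation, and use the standard lift in the second factor.

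\textbf{Base case.} For part (1a), identify $\Fun([\bW],\R)^\bW \simeq \Fun([1],\bW_\R)$, under which $t$ is target evaluation on an arrow $\infty$-category --- a standard cocartesian fibration whose lifts are given by left-composition.  For part (2a), $\Fun([\any],\R)^\bW$ is instead a subcategory of $\Fun([1],\R)$ whose morphisms are commutative squares with both vertical arrows in $\bW_\R$, but the same left-composition construction still yields a cocartesian lift of $g : y \to y'$ over $f : x \to y$, namely the square
\[
\begin{tikzcd}
x \arrow[equal]{d} \arrow{r}{f} & y \arrow{d}{g} \\
x \arrow{r}{g \circ f} & y',
\end{tikzcd}
\]
whose vertical arrows are both weak equivalences.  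Parts (1b) and (2b) are formally dual, producing cartesian lifts via right-composition.

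\textbf{Main obstacle.} The genuinely $\infty$-categorical part is verifying that these coordinate-wise lifts are in fact cocartesian, which demands that a certain square of mapping spaces in $\Fun(\I,\R)^\bW$ be a pullback (and cannot be discharged by elementary on-the-nose factorization).  I expect to handle this by expressing the mapping spaces in $\Fun(\I,\R)^\bW$ as pullbacks via the decomposition above, so that the cocartesian-pullback condition for the coordinate-wise lift reduces cleanly to the cocartesian property for the base case together with the two-out-of-three property for $\bW_\R$ (which ensures the relevant compatibility morphisms remain weak equivalences).  After that reduction the bookkeeping is routine.
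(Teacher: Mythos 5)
Your reduction via the pullback decomposition
\[
\Fun(\I,\R)^\bW \simeq \Fun(\I',\R)^\bW \times_{\bW_\R} \Fun([\bW],\R)^\bW
\]
(resp.\ with $[\any]$) is exactly how the paper begins, and for item (1) your explicit left/right-composition lifts are the usual ones that underlie Corollary T.2.4.7.12, so that part is fine.  Where your plan diverges from the paper, and where I see a real concern, is in item (2) and specifically in your last paragraph: you say the pullback verification ``reduces cleanly to the cocartesian property for the base case together with the two-out-of-three property for $\bW_\R$.''  But the lemma as stated does \emph{not} assume that $\bW_\R$ satisfies two-out-of-three --- the paper works with the weaker definition of relative $\infty$-category (cf.\ Remark \ref{rnerves:weaker defn of rel infty-cat}), and the paper's proof of item (2) is carefully engineered to avoid it, appealing only to $\R^\simeq \subset \bW_\R$.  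Concretely, the paper first applies T.2.4.7.12 at the level of $\Fun(-,\R)^\Rel$ (so no $\bW$-condition at all), pulls back along $\bW_\R\subset\R$ to land in the auxiliary wide subcategory $\Fun(\I,\R)^{\bW@s}$ (weak-equivalence condition imposed only at the one glued-on vertex), exhibits the cartesian lift there, and then shows this lift and its cartesianness pass to the further subcategory $\Fun(\I,\R)^\bW$.  The crucial observation is that the cartesian lift's components are all either identities or the given morphism in $\bW_\R$, so membership in $\Fun(\I,\R)^\bW$ --- and the slice-category pullback that transports cartesianness --- only needs $\R^\simeq \subset \bW_\R$, never two-out-of-three.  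If your argument genuinely requires two-out-of-three, you have proved a strictly weaker statement than the lemma; you should re-examine whether you can instead exploit, as the paper does, that the coordinate-wise lift has $\mathrm{id}$ at the glued vertex and the given $\bW_\R$-morphism at the free vertex, which makes all the compatibility morphisms lie in $\bW_\R$ for free.

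A smaller point: in case (2), the fibration $\Fun(\I,\R)^\bW \to \bW_\R$ is \emph{not} the pullback projection (that would be evaluation at the glued vertex $i=s$) but the composite with evaluation at the opposite vertex $t$, so you cannot invoke stability of co/cartesian fibrations under pullback; you do correctly plan to verify the mapping-space pullback directly, but be aware that checking that an explicitly constructed morphism in a pullback of $\infty$-categories is co/cartesian is nontrivial bookkeeping (one must track the compatibility homotopy in the $\bW_\R$-factor).  The paper outsources essentially all of that to T.2.4.7.12, T.2.4.2.3(2), and T.2.4.1.3(2), which is what allows its proof of item (2) to be short once the right auxiliary subcategory is identified.
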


\begin{proof}
We first prove item \ref{add W}\ref{add W at t}.  Applying Corollary T.2.4.7.12 to the functor
\[ \Fun(\I',\R)^\bW \xra{i} \bW_\R \]
and noting that $\Fun([\bW],\R)^\bW \simeq \Fun([1],\bW_\R)$ (in a way compatible with the evaluation maps), we obtain that the composite restriction
\[ \Fun(\I,\R)^\bW \simeq \lim \left( \begin{tikzcd}
& \Fun([\bW],\R)^\bW \arrow{d}{t} \\
\Fun(\I',\R)^\bW \arrow{r}[swap]{i} & \bW_\R
\end{tikzcd} \right)
\ra \Fun([\bW],\R)^\bW \xra{s} \bW_\R \]
is a cartesian fibration, as desired.  The proof of item \ref{add W}\ref{add W at s} is completely dual.

We now prove item \ref{add A}\ref{add A at t}.  For this, consider the diagram
\[ \begin{tikzcd}
\Fun(\I,\R)^\bW \arrow{rr} \arrow{d} & & \Fun((\I')^\simeq , \bW_\R) \arrow{d} \\
\Fun(\I,\R)^{\bW @ s} \arrow{r} \arrow{dd}[swap]{s} & \Fun(\I,\R)^\Rel \arrow{r} \arrow{d} & \Fun(\I',\R)^\Rel \arrow{d}{i} \\
& \Fun([\any],\R)^\Rel \arrow{r}[swap]{t} \arrow{d}{s} & \R \\
\bW_\R \arrow{r} & \R
\end{tikzcd} \]
in which all small rectangles are pullbacks and in which we have introduced the ad hoc notation
\[ \Fun(\I,\R)^{\bW @ s} \subset \Fun(\I,\R)^\Rel \]
for the wide subcategory whose morphisms are those natural transformations whose component at $s \in [\any] \subset \I$ lies in $\bW_\R \subset \R$.  Observing that $\Fun([\any],\R)^\Rel \simeq \Fun([1],\R)$ (in a way compatible with the evaluation maps), it follows from applying Corollary T.2.4.7.12 to the functor
\[ \Fun(\I',\R)^\Rel \xra{i} \R \]
that the composite
\[ \Fun(\I,\R)^\Rel \ra \Fun([\any],\R)^\Rel \xra{s} \R \]
is a cartesian fibration, for which the cartesian morphisms are precisely those that are sent to equivalences under the restriction functor
\[ \Fun(\I,\R)^\Rel \ra \Fun(\I',\R)^\Rel . \]
Then, by Propositions T.2.4.2.3(2) and T.2.4.1.3(2), the functor
\[ \Fun(\I,\R)^{\bW @ s} \xra{s} \bW_\R \]
is also a cartesian fibration, for which any morphism that is sent to an equivalence under the composite
\[ \Fun(\I,\R)^{\bW @ s} \ra \Fun(\I,\R)^\Rel \ra \Fun(\I',\R)^\Rel \]
is cartesian.  Now, for any map $x' \xra{\varphi} x$ in $\bW_\R$ and any object
\[ G \in \left( \pt_\Cati \underset{x,\bW_\R,s}{\times} \Fun(\I,\R)^{\bW @ s} \right) , \]
there clearly exists such a cartesian morphism
\[ ( F \xra{\tilde{\varphi}} G ) \in \left( \Fun \left( [1],\Fun(\I,\R)^{\bW @ s} \right) \underset{\Fun([1],s) , \Fun([1],\bW_\R), \varphi}{\times} \pt_\Cati \right) \]
(which can easily be constructed using the definition of $(\I,\bW_\I)$ as a pushout).  Moreover, since by definition $\R^\simeq \subset \bW_\R$, it follows that this is in fact a morphism in the (wide) subcategory $\Fun(\I,\R)^\bW \subset \Fun(\I,\R)^{\bW @ s}$.  Hence, we obtain a diagram
\[ \begin{tikzcd}
\left( \Fun(\I,\R)^\bW \right)_{/\tilde{\varphi}} \arrow{r} \arrow{d} & \left( \Fun(\I,\R)^{\bW @ s} \right)_{/\tilde{\varphi}} \arrow{r} \arrow{d} & (\bW_\R)_{/\varphi} \arrow{d} \\
\left( \Fun(\I,\R)^\bW \right)_{/G} \arrow{r} & \left( \Fun(\I,\R)^{\bW @ s} \right)_{/G} \arrow{r} & (\bW_\R)_{/x}
\end{tikzcd} \]
in $\Cati$, in which the right square is a pullback since $\tilde{\varphi}$ is a cartesian morphism.  Moreover, again using the fact that $\R^\simeq \subset \bW_\R$, it is easy to check that the left square is also a pullback.  So the entire rectangle is a pullback, and hence $\tilde{\varphi}$ is also a cartesian morphism for the functor
\[ \Fun(\I,\R)^\bW \xra{s} \bW_\R . \]
From here, it follows from the fact that $\Fun(\I,\R)^\bW \subset \Fun(\I,\R)^{\bW @ s}$ is a subcategory that this functor is indeed a cartesian fibration.  The proof of item \ref{add A}\ref{add A at s} is completely dual.
\end{proof}

Given an arbitrary doubly-pointed relative $\infty$-catetgory $(\I,\bW_\I) \in \RelCatip$ some relative $\infty$-category $(\R,\bW_\R) \in \RelCati$ which we consider to be doubly-pointed via some choice $x,y \in \R$ of a pair of objects, we will be interested in the functoriality of the construction
\[ \Funp((\I,\bW_\I),((\R,\bW_\R),x,y))^\bW \in \Cati \]
in the variable $x \in \bW$ but for a fixed choice of $y \in \bW$ (or vice versa).  This functoriality will be expressed by a variant of \cref{ev at free target gives cocart fibn}.  However, in order to accommodate the fixing of just one of the two chosen objects, we must first introduce the following notation.

\begin{notn}
Let $\I \in \RelCatip$, let $(\R,\bW) \in \RelCati$, and let $x, y \in \R$.  Then, we write
\[ \left( \Funps(\I,\R)^\Rel , \Funps(\I,\R)^\bW \right) = \lim \left( \begin{tikzcd}
& \left( \Fun(\I,\R)^\Rel , \Fun(\I,\R)^\bW \right) \arrow{d}{s} \\
\pt_\RelCati \arrow{r}[swap]{x} & (\R,\bW)
\end{tikzcd} \right) \]
and
\[ \left( \Funpt(\I,\R)^\Rel , \Funpt(\I,\R)^\bW \right) = \lim \left( \begin{tikzcd}
& \left( \Fun(\I,\R)^\Rel , \Fun(\I,\R)^\bW \right) \arrow{d}{t} \\
\pt_\RelCati \arrow{r}[swap]{y} & (\R,\bW)
\end{tikzcd} \right) . \]
\end{notn}

We now give a ``half-doubly-pointed'' variant of \cref{ev at free target gives cocart fibn}, but stated only in the special case that we will need.

\begin{lem}\label{ev half-doubly-pointed}
Let $\word{m} \in \Z$, let $(\R,\bW) \in \RelCati$, and let $x,y \in \R$.
\begin{enumerate}

\item\label{ev at s} The functor $\Funpt(\word{m},\R)^\bW \xra{s} \bW$
\begin{enumeratesub}
\item\label{ev at s with W} is a cocartesian fibration if $\word{m}$ begins with $\bW^{-1}$, and
\item\label{ev at s with A} is a cartesian fibration if $\word{m}$ begins with $\any$.
\end{enumeratesub}

\item\label{ev at t} The functor $\Funps(\word{m},\R)^\bW \xra{t} \bW$
\begin{enumeratesub}
\item\label{ev at t with W} is a cartesian fibration if $\word{m}$ ends with $\bW^{-1}$, and
\item\label{ev at t with A} is a cocartesian fibration if $\word{m}$ ends with $\any$.
\end{enumeratesub}

\end{enumerate}
\end{lem}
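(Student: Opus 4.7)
The four cases of the lemma correspond bijectively to the four cases of \cref{ev at free target gives cocart fibn} under the pushout decompositions of \cref{rel word concatenation is a pushout}, and the plan is to deduce each from its unpointed counterpart by observing that the explicit (co)cartesian lifts produced there are compatible with fixing the opposite endpoint. The pairing is: case (\ref{ev at s with W}) uses \cref{add W at s}; (\ref{ev at s with A}) uses \cref{add A at t}; (\ref{ev at t with W}) uses \cref{add W at t}; and (\ref{ev at t with A}) uses \cref{add A at s}.

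To illustrate, consider case (\ref{ev at s with W}). Since $\word{m}$ begins with $\bW^{-1}$, write $\word{m} = [\bW^{-1};\word{m}']$, and reinterpret $[\bW^{-1}]$ as $[\bW]$ with reversed double-pointing (so that the same underlying relative category is used, but with $t_{[\bW]}$ identified with $s_{[\bW^{-1}]} = s_{\word{m}}$ and $s_{[\bW]}$ with $t_{[\bW^{-1}]} = s_{\word{m}'}$). Then by \cref{rel word concatenation is a pushout} we have a pushout
\[\begin{tikzcd}
\pt \arrow{r}{s_{[\bW]}} \arrow{d}[swap]{s_{\word{m}'}} & {[\bW]} \arrow{d} \\
\word{m}' \arrow{r} & \word{m}
\end{tikzcd}\]
in $\RelCati$, and applying \cref{ev at free target gives cocart fibn}(\ref{add W at s}) with $(\I',\bW_{\I'}) = \word{m}'$ and $i = s_{\word{m}'}$ shows that $\Fun(\word{m},\R)^\bW \xra{\ev_{s_{\word{m}}}} \bW$ is a cocartesian fibration.

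Now $\Funpt(\word{m},\R)^\bW$ sits inside $\Fun(\word{m},\R)^\bW$ as the (non-full) subcategory fibered over $y$ along $\ev_{t_{\word{m}}}$; it suffices to exhibit, for each $F \in \Funpt(\word{m},\R)^\bW$ over some $x' \in \bW$ and each morphism $x' \to x$ in $\bW$, a cocartesian lift $F \to G$ that lies in $\Funpt(\word{m},\R)^\bW$, whereupon the cocartesianness of the restricted evaluation follows by the subcategory argument already used in the proof of \cref{ev at free target gives cocart fibn}(\ref{add A at t}) (appealing to Propositions T.2.4.2.3(2) and T.2.4.1.3(2)). But the pushout description above produces such a lift explicitly: it modifies $F$ only at the value $s_{\word{m}}$ and at the adjacent backwards arrow, leaving the value at $t_{\word{m}}$ and the lifting component there as identities. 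The remaining three cases are completely analogous, each using the indicated item of \cref{ev at free target gives cocart fibn}; the main (and only) obstacle is the bookkeeping needed to verify in each case that the pushout-based (co)cartesian lifts are spatially localized near the endpoint being varied, and hence harmless at the endpoint being held fixed.
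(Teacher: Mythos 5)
Your proposal is correct and your case-pairing matches the paper's exactly, but your strategy differs at a key point. The paper does not apply \cref{ev at free target gives cocart fibn} as a black box and then restrict to the fiber; instead it re-runs the \emph{argument} of that lemma in the half-pointed setting. Concretely, for case \ref{ev at t with W} it writes the pullback square with $\Funps(\word{m}',\R)^\bW$ in the lower-left corner (in place of $\Fun(\word{m}',\R)^\bW$) and applies Corollary T.2.4.7.12 directly to the functor $\Funps(\word{m}',\R)^\bW \xra{t_{\word{m}'}} \bW$, and likewise for case \ref{ev at s with A} it reproduces the entire pullback diagram from the proof of \cref{ev at free target gives cocart fibn}\ref{add A}\ref{add A at t} with $\Funpt$ in place of $\Fun$ throughout. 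Since T.2.4.7.12 applies to an arbitrary functor, this costs nothing extra and avoids any descent step. One consequence: the paper's proof disposes of the length-one words $[\any]$ and $[\bW^{-1}]$ separately at the outset, while your argument handles them uniformly.

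Your alternative---apply the unpointed lemma, then restrict to the fiber of $\ev_{t_\word{m}}$ (resp. $\ev_{s_\word{m}}$) over $y$ (resp. $x$)---does work, and the crucial observation that the co/cartesian lifts produced by the pushout decomposition modify only the outermost vertex and arrow is the right one. But two small cautions. First, $\Funpt(\word{m},\R)^\bW$ is not a wide subcategory of $\Fun(\word{m},\R)^\bW$ but a genuine $\infty$-categorical \emph{fiber} (the pullback over the point $y$ along $\ev_t$); it is neither full nor wide, so the ``same subcategory argument'' from the proof of \cref{ev at free target gives cocart fibn}\ref{add A}\ref{add A at t} does not literally transfer. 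The propositions T.2.4.2.3(2) and T.2.4.1.3(2) are invoked there for the base-change $\R \rightsquigarrow \bW_\R$ and the wide-subcategory passage $\Fun^{\bW @ s} \rightsquigarrow \Fun^\bW$, which are different situations. What you actually need is the standard (but not completely trivial) fact that if $p \colon \C \to \D$ is cocartesian and $q \colon \C \to \E$ sends $p$-cocartesian edges to equivalences, then each fiber $\C_e \to \D$ is again cocartesian; this follows from a short pullback-of-hom-spaces computation once one notes that for $F \to G$ in the fiber, $q(F\to G) \simeq \id_e$ forces the $q$-image of any extension $F \to H$ to agree with that of $G \to H$. Second, you should record that the lifts land in $\Fun^\bW$ (not just $\Fun^{\bW @ s}$ or $\Fun^\Rel$), exactly as in the unpointed proof; this is where $\R^\simeq \subset \bW$ is used. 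Once those points are supplied, your route is sound, if slightly longer than the paper's.
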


\begin{proof}
If we simply have $\word{m} = [\any]$ or $\word{m} = [\bW^{-1}]$ then these statements follow trivially from \cref{ev at free target gives cocart fibn}, so let us assume that the relative word $\word{m}$ has length greater than 1.

To prove item \ref{ev at t}\ref{ev at t with W}, suppose that $\word{m} = [\word{m}' ; \bW^{-1}]$.  Then we have a pullback square
\[ \begin{tikzcd}
\Funps(\word{m},\R)^\bW \arrow{r} \arrow{d} & \Fun([\bW^{-1}],\R)^\bW \arrow{d}{s_{[\bW^{-1}]}} \\
\Funps(\word{m}',\R)^\bW \arrow{r}[swap]{t_{\word{m}'}} & \bW
\end{tikzcd} \]
which, making the identification of $[\bW^{-1}]$ with $[\bW]$ in a way which switches the source and target objects, is equivalently a pullback square
\[ \begin{tikzcd}
\Funps(\word{m},\R)^\bW \arrow{r} \arrow{d} & \Fun([\bW],\R)^\bW \arrow{d}{t_{[\bW]}} \\
\Funps(\word{m}',\R)^\bW \arrow{r}[swap]{t_{\word{m}'}} & \bW .
\end{tikzcd} \]
From here, the proof parallels that of \cref{ev at free target gives cocart fibn}\ref{add W}\ref{add W at t}, only now we apply Corollary T.2.4.7.12 to the functor
\[ \Funps(\word{m}',\R)^\bW \xra{t_{\word{m}'}} \bW . \]
The proof of item \ref{ev at s}\ref{ev at s with W} is completely dual.

To prove item \ref{ev at s}\ref{ev at s with A}, let us now suppose that $\word{m} = [\any;\word{m}']$.  Then we have a diagram
\[ \begin{tikzcd}
\Funpt(\word{m},\R)^\bW \arrow{rr} \arrow{d} & & \Funpt((\word{m}')^\simeq , \bW) \arrow{d} \\
\Funpt(\word{m},\R)^{\bW @ s} \arrow{r} \arrow{dd}[swap]{s} & \Funpt(\word{m},\R)^\Rel \arrow{r} \arrow{d} & \Funpt(\word{m}',\R)^\Rel \arrow{d}{s_{\word{m}'}} \\
& \Fun([\any],\R)^\bW \arrow{r}[swap]{t_{[\any]}} \arrow{d}{s_{[\any]}} & \R \\
\bW \arrow{r} & \R
\end{tikzcd} \]
in which all small rectangles are pullbacks, almost identical to that of the proof of \cref{ev at free target gives cocart fibn}\ref{add A}\ref{add A at t}.  From here, the proof proceeds in a completely analogous way to that one.  The proof of item \ref{ev at t}\ref{ev at t with A} is completely dual.
\end{proof}

\cref{ev half-doubly-pointed}, in turn, enables us to make the following definitions.

\begin{notn}\label{notation for half-doubly-pointed words}
Let $\word{m} \in \Z$, let $(\R,\bW) \in \RelCati$, and let $x,y \in \R$.
\begin{itemize}

\item If $\word{m}$ begins with $\bW^{-1}$, we write
\[ \bW \xra{\word{m}(-,y)} \Cati \]
for the functor classifying the cocartesian fibration of \cref{ev half-doubly-pointed}\ref{ev at s}\ref{ev at s with W}.  On the other hand, if $\word{m}$ begins with $\any$, we write
\[ \bW^{op} \xra{\word{m}(-,y)} \Cati \]
for the functor classifying the cartesian fibration of \cref{ev half-doubly-pointed}\ref{ev at s}\ref{ev at s with A}.

\item If $\word{m}$ ends with $\bW^{-1}$, we write
\[ \bW^{op} \xra{\word{m}(x,-)} \Cati \]
for the functor classifying the cartesian fibration of \cref{ev half-doubly-pointed}\ref{ev at t}\ref{ev at t with W}.  On the other hand, if $\word{m}$ ends with $\any$, we write
\[ \bW \xra{\word{m}(x,-)} \Cati \]
for the functor classifying the cocartesian fibration of \cref{ev half-doubly-pointed}\ref{ev at t}\ref{ev at t with A}.

\item By convention and for convenience, if $\word{m} = [\es] \in \Z$ is the empty relative word (which defines the terminal relative $\infty$-category), we let both $\word{m}(x,-)$ and $\word{m}(-,y)$ denote either functor
\[ \bW \xra{\const(\pt_\Cati)} \Cati \]
or
\[ \bW^{op} \xra{\const(\pt_\Cati)} \Cati . \]

\end{itemize}
\end{notn}

Using \cref{notation for half-doubly-pointed words}, we now express the $\infty$-category $[\word{m};\word{m}']_{(\R,\bW)}(x,y)$ of zigzags in $(\R,\bW)$ from $x$ to $y$ of the concatenated zigzag type $[\word{m};\word{m}']$ in terms of the two-sided Grothendieck construction (see \cref{gr:define two-sided Gr}).  This is an analog of \cite[9.4]{DKCalc}.\footnote{In the statement of \cite[9.4]{DKCalc}, the third appearance of $\word{m}$ should actually be $\word{m}'$.}

\begin{lem}\label{concatenated zigzags}
Let $\word{m},\word{m}' \in \Z$.  Then for any $(\R,\bW) \in \RelCati$ and any $x,y \in \R$, we have an equivalence
\[ [\word{m};\word{m}'](x,y) \simeq \left\{ \begin{array}{ll}
\Gr \left( \word{m}'(-,y) , \bW , \word{m}(x,-) \right) , & \textup{$\word{m}$ ends with $\any$ and $\word{m}'$ begins with $\any$} \\
\Gr \left( \word{m}(x,-) , \bW , \word{m}'(-,y) \right) , & \textup{$\word{m}$ ends with $\bW^{-1}$ and $\word{m}'$ begins with $\bW^{-1}$} \\
\Gr \left( \const(\pt) , \bW , \left( \word{m}(x,-) \times \word{m}'(-,y) \right) \right) , & \textup{$\word{m}$ ends with $\any$ and $\word{m}'$ begins with $\bW^{-1}$} \\
\Gr \left( \left( \word{m}(x,-) \times \word{m}'(-,y) \right) , \bW , \const(\pt) \right) , & \textup{$\word{m}$ ends with $\bW^{-1}$ and $\word{m}'$ begins with $\any$.}
\end{array} \right. \]
which is natural in $((\R,\bW),x,y) \in \RelCatip$.
\end{lem}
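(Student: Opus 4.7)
The plan is to express $[\word{m};\word{m}'](x,y)$ as a pullback of two (co)cartesian fibrations over $\bW$, and then to read off the two-sided Grothendieck construction using \cref{ev half-doubly-pointed}.

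First I would apply $\Funp(-,\R)^\bW$ to the pushout square of \cref{rel word concatenation is a pushout}, which exhibits $[\word{m};\word{m}']$ as the pushout of $\word{m}$ and $\word{m}'$ along the point that glues the target of $\word{m}$ to the source of $\word{m}'$.  Since the double-pointing of $[\word{m};\word{m}']$ uses only the outer source of $\word{m}$ and the outer target of $\word{m}'$, while the inner glued vertex remains free but with its components of natural weak equivalences required to lie in $\bW$, this should yield a natural equivalence
\[ [\word{m};\word{m}'](x,y) \simeq \Funps(\word{m},\R)^\bW \times_\bW \Funpt(\word{m}',\R)^\bW , \]
in which the two maps to $\bW$ are evaluation at the target of $\word{m}$ and at the source of $\word{m}'$, respectively.

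Next I would invoke \cref{ev half-doubly-pointed}: each of these evaluation functors is either a cartesian or a cocartesian fibration over $\bW$, with variance determined by whether $\word{m}$ ends with $\bW^{-1}$ or $\any$ (and similarly for the beginning of $\word{m}'$).  By the defining notation of \cref{notation for half-doubly-pointed words}, the classifying functors are precisely $\word{m}(x,-)$ and $\word{m}'(-,y)$.  In the two ``mixed'' cases --- where $\word{m}$ ends and $\word{m}'$ begins with \emph{different} symbols --- one evaluation is cartesian and the other is cocartesian, so their pullback is \emph{by definition} the two-sided Grothendieck construction, with the cartesian side as the first argument of $\Gr$ and the cocartesian side as the last; this produces the formulas in Cases 1 and 2 of the statement.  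In the two ``pure'' cases, both fibrations share the same variance, so their pullback is again a (co)cartesian fibration of that same variance, classified by the levelwise product $\word{m}(x,-) \times \word{m}'(-,y)$; this is equivalently the two-sided Grothendieck construction obtained by inserting $\const(\pt)$ as the trivial functor on the opposite side, yielding the formulas in Cases 3 and 4.

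The main obstacle is primarily bookkeeping: one must carefully track the variance conventions across the four cases, and confirm that each equivalence is natural in $((\R,\bW),x,y) \in \RelCatip$.  This naturality should follow formally from the naturality of each ingredient --- the pushout of \cref{rel word concatenation is a pushout}, the formation of $\Funp(-,\R)^\bW$, and the classification of (co)cartesian fibrations over $\bW$ --- and so is not expected to introduce serious difficulty.
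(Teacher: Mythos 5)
Your overall strategy is the paper's own: apply $\Funp(-,\R)^\bW$ to the pushout square of \cref{rel word concatenation is a pushout}, obtain the pullback
$\Funps(\word{m},\R)^\bW \times_\bW \Funpt(\word{m}',\R)^\bW$
over $\bW$, and then identify it with a two-sided Grothendieck construction (directly in two cases, and via a fiber product of co/cartesian fibrations of matching variance and insertion of $\const(\pt)$ in the other two).

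However, your variance bookkeeping is inverted, and since this lemma is almost entirely bookkeeping, you should check the conventions against \cref{ev half-doubly-pointed} more carefully.  You assert that the ``mixed'' cases --- one cartesian, one cocartesian --- occur when $\word{m}$ ends and $\word{m}'$ begins with \emph{different} symbols, but the opposite is true.  According to \cref{ev half-doubly-pointed}, the $t$-evaluation $\Funps(\word{m},\R)^\bW \to \bW$ is \emph{co}cartesian when $\word{m}$ ends with $\any$ and cartesian when it ends with $\bW^{-1}$, while the $s$-evaluation $\Funpt(\word{m}',\R)^\bW \to \bW$ has the \emph{reversed} convention: cartesian when $\word{m}'$ begins with $\any$, cocartesian when it begins with $\bW^{-1}$.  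So \emph{same} adjacent symbols produce opposite variances and hence fall directly under \cref{gr:define two-sided Gr} (Cases 1 and 2 of the statement), while \emph{different} adjacent symbols produce matching variances and require \cref{gr:fiber prods of cocart fibns is Gr of product} together with \cref{gr:Gr preserves terminal objects} (Cases 3 and 4).  Your conclusion attributes the correct formulas to the correct numbered cases of the statement, which is what suggests this is a slip in the exposition rather than a deeper confusion, but as written your premise (``different symbols $\Rightarrow$ mixed variance'') contradicts your conclusion (``mixed variance $\Rightarrow$ Cases 1 and 2'').  Once you correct the premise to ``same symbols $\Rightarrow$ mixed variance,'' your argument is internally consistent and agrees with the paper's proof.
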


\begin{proof}
Recall from \cref{rel word concatenation is a pushout} that we have a pushout square
\[ \begin{tikzcd}
\pt_\RelCati \arrow{r}{s} \arrow{d}[swap]{t} & \word{m}' \arrow{d} \\
\word{m} \arrow{r} & {[\word{m};\word{m}']}
\end{tikzcd} \]
in $\RelCati$, through which $[\word{m};\word{m}']$ acquires its source object from $\word{m}$ and its target object from $\word{m}'$.  This gives rise to a string of equivalences
\begin{align*}
[\word{m};\word{m}'](x,y)
= \Funp ( [\word{m};\word{m}'] , \R)^\bW
& \simeq \lim \left( \begin{tikzcd}[ampersand replacement=\&]
\& \& \& \pt_\Cati \arrow{d}{y} \\
\& \& \Fun(\word{m}',\R)^\bW \arrow{r}[swap]{t} \arrow{d}{s} \& \bW \\
\& \Fun(\word{m},\R)^\bW \arrow{r}[swap]{t} \arrow{d}{s} \& \bW \\
\pt_\Cati \arrow{r}[swap]{x} \& \bW
\end{tikzcd} \right)
\\
& \simeq \lim \left( \begin{tikzcd}[ampersand replacement=\&]
\& \Funpt(\word{m}',\R)^\bW \arrow{d}{s} \\
\Funps(\word{m},\R)^\bW \arrow{r}[swap]{t} \& \bW
\end{tikzcd} \right)
\end{align*}
in $\Cati$.  From here, the first and second cases follow from \cref{ev half-doubly-pointed}, \cref{notation for half-doubly-pointed words}, and \cref{gr:define two-sided Gr}, while the third and fourth cases follow by additionally appealing to \cref{gr:Gr preserves terminal objects} and \cref{gr:fiber prods of cocart fibns is Gr of product}.
\end{proof}

\section{Homotopical three-arrow calculi in relative $\infty$-categories}\label{section three-arrow calculi}

In the previous section, given a relative $\infty$-category $(\R,\bW)$, we introduced the \textit{hammock simplicial space}
\[ \homhamR(x,y) \in s\S \]
for two given objects $x,y \in \R$.  The definition of this simplicial space is fairly explicit, but it is nevertheless quite large.  In this section, we show that under a certain condition -- namely, that $(\R,\bW)$ admits a \textit{homotopical three-arrow calculus} -- we can at least recover this simplicial space up to weak equivalence in $s\S_\KQ$ (i.e.\! we can recover its geometric realization) froma much smaller simplicial space, in fact from one of the constituent simplicial spaces in its defining colimit.  This condition is often satisfied in practice; for example, it holds when $(\R,\bW)$ admits the additional structure of a \textit{model $\infty$-category} (see \cref{fundthm:model infty-cats have calculi}).

This section is organized as follows.
\begin{itemize}

\item In \cref{subsection statement of fund thm of htpical three-arrow calculi}, we define what it means for a relative $\infty$-category to admit a homotopical three-arrow calculus, and we state the \textit{fundamental theorem of homotopical three-arrow calculi} (\ref{calculus gives reduction}) described above.

\item In \cref{subsection rel infty-cats for three-arrow calculi}, in preparation for the proof of \cref{calculus gives reduction}, we assemble some auxiliary results regarding relative $\infty$-categories.

\item In \cref{subsection co-ends for htpical three-arrow calculi}, in preparation for the proof of \cref{calculus gives reduction}, we assemble some auxiliary results regarding ends and coends.

\item In \cref{subsection proof of htpical three-arrow calculi}, we give the proof of \cref{calculus gives reduction}.

\end{itemize}

\subsection{The fundamental theorem of homotopical three-arrow calculi}\label{subsection statement of fund thm of htpical three-arrow calculi}

We begin with the main definition of this section, whose terminology will be justified by \cref{calculus gives reduction}; it is a straightforward generalization of \cite[Definition 4.1]{LowMG}, which is itself a minor variant of \cite[6.1(i)]{DKCalc}.

\begin{defn}\label{define calculus}
Let $(\R,\bW) \in \RelCati$.  We say that $(\R,\bW)$ admits a \bit{homotopical three-arrow calculus} if for all $x,y \in \R$ and for all $i,j \geq 1$, the map
\[ [ \bW^{-1} ; \any^{\circ i} ; \bW^{-1} ; \any^{\circ j} ; \bW^{-1} ] \ra [ \bW^{-1} ; \any^{\circ i} ; \any^{\circ j} ; \bW^{-1} ] \]
in $\Z \subset \RelCatp$ obtained by collapsing the middle weak equivalence induces a map
\[ \Funp ( [\bW^{-1} ; \any^{\circ i} ; \any^{\circ j} ; \bW^{-1} ] , \R)^\bW \ra \Funp ( [ \bW^{-1} ; \any^{\circ i} ; \bW^{-1} ; \any^{\circ j} ; \bW^{-1} ] , \R)^\bW \]
in $\bW^\Cati_\Thomason \subset \Cati$ (i.e.\! it becomes an equivalence upon applying the groupoid completion functor $(-)^\gpd : \Cati \ra \S$).
\end{defn}

\begin{notn}\label{notn 3}
Since it will appear repeatedly, we make the abbreviation $\word{3} = [ \bW^{-1} ; \any ; \bW^{-1} ]$ for the relative word
\[ \begin{tikzcd}
s & \bullet \arrow{l}[swap]{\approx} \arrow{r} & \bullet & t \arrow{l}[swap]{\approx} .
\end{tikzcd} \]
\end{notn}

\begin{defn}\label{defn 3-arrow zigzags}
For any relative $\infty$-category $(\R,\bW)$ and any objects $x,y \in \R$, we will refer to
\[ \word{3}(x,y) = \Funp(\word{3},\R)^\bW \in \Cati \]
as the $\infty$-category of \bit{three-arrow zigzags} in $\R$ from $x$ to $y$.
\end{defn}

We now state the \bit{fundamental theorem of homotopical three-arrow calculi}, an analog of \cite[Proposition 6.2(i)]{DKCalc}; we will give its proof in \cref{subsection proof of htpical three-arrow calculi}.

\begin{thm}\label{calculus gives reduction}
If $(\R,\bW) \in \RelCati$ admits a homotopical three-arrow calculus, then for any $x,y \in \R$, the natural map
\[ \Nervei(\word{3}(x,y)) \ra \homhamR(x,y) \]
in $s\S$ becomes an equivalence under the geometric realization functor $|{-}| : s\S \ra \S$.
\end{thm}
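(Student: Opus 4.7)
Since $|{-}| : s\S \to \S$ is a left adjoint and hence commutes with colimits, and for any $\C \in \Cati$ one has $|\Nervei(\C)| \simeq \C^{\gpd}$ by \cref{rnerves:groupoid-completion of CSSs}, the statement reduces to showing that the natural map
\[ \word{3}(x,y)^{\gpd} \longrightarrow \colim_{\word{m}^{op} \in \Z^{op}} \word{m}(x,y)^{\gpd} \]
in $\S$ is an equivalence. The plan is to use the calculus hypothesis to exhibit the colimit diagram on the right as, up to natural Thomason equivalence, effectively constant at $\word{3}(x,y)^{\gpd}$ on a suitable cofinal subcategory of $\Z^{op}$ with contractible classifying space.

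To that end, the first step is to single out a subcategory $\Z_0 \subset \Z$ on the ``bracketed'' zigzag types of the form $[\bW^{-1}; \any^{\circ n_1}; \bW^{-1}; \any^{\circ n_2}; \cdots; \bW^{-1}]$ (those starting and ending with $\bW^{-1}$). Every $\word{m} \in \Z$ admits a canonical enlargement to such a type, via adjoining initial/terminal $\bW^{-1}$-blocks and merging consecutive $\any$'s; the resulting map in $\Z$, which uses the collapses $[\bW^{-1}] \to [(\bW^{-1})^{\circ 0}]$ allowed in the definition of $\Z$, induces the relevant comparison on zigzag categories. The next goal is then to verify that the inclusion $\Z_0^{op} \hookrightarrow \Z^{op}$ is cofinal for the colimit under consideration, which one does by a Quillen Theorem A-style contraction of the appropriate slices.

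Having reduced to $\Z_0^{op}$, the second step is to apply the calculus systematically. For $\word{m} \in \Z_0$ with $n = |\word{m}|_\any$, iterating \cref{define calculus} collapses all interior $\bW^{-1}$'s and yields a Thomason equivalence between $\word{m}(x,y)$ and $[\bW^{-1}; \any^{\circ n}; \bW^{-1}](x,y)$. It remains to compare $[\bW^{-1}; \any^{\circ n}; \bW^{-1}](x,y)$ with $\word{3}(x,y)$; for this one applies the calculus with varying choices of $i,j$ (e.g.\ $i = n{-}1$, $j = 1$), passing through intermediate alternating zigzag types of the form $[\bW^{-1}; \any; \bW^{-1}; \any; \cdots; \bW^{-1}]$, and inducts on $n$ to produce a chain of Thomason equivalences identifying $[\bW^{-1}; \any^{\circ n}; \bW^{-1}](x,y)^{\gpd}$ with $\word{3}(x,y)^{\gpd}$. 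Assembling these coherently into a natural equivalence of diagrams on $\Z_0^{op}$, and noting that $|\Z_0^{op}|$ is contractible (since $\Z_0$ admits $\word{3}$ as a kind of reduced terminal object once these identifications are taken into account), will yield the desired equivalence $\colim_{\Z^{op}} \word{m}(x,y)^{\gpd} \simeq \word{3}(x,y)^{\gpd}$.

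The main obstacle is the reduction in the second step: the calculus hypothesis only collapses a single middle $\bW^{-1}$ at a time between $\any$-blocks of positive length, so reducing a general $[\bW^{-1}; \any^{\circ n}; \bW^{-1}](x,y)$ to $\word{3}(x,y)$ requires stringing together many calculus applications through a combinatorially intricate sequence of intermediate zigzag types, and ensuring that the resulting equivalences are natural in $\word{m}^{op}$. Because we are working $\infty$-categorically, each such reduction step must be produced by a universal (corepresenting) construction rather than any ad hoc point-set manipulation, as the author emphasizes in \cref{intro rem more details than DK}; this is precisely what will account for the proof's substantial length relative to its $1$-categorical analog in \cite{DKCalc}.
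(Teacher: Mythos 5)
There is a genuine and fatal gap in the second step: the homotopical three-arrow calculus (\cref{define calculus}) \emph{cannot} reduce $[\bW^{-1};\any^{\circ n};\bW^{-1}](x,y)$ to $\word{3}(x,y)$. The calculus only allows you to collapse a \emph{middle} $\bW^{-1}$ sandwiched between two nonempty $\any$-blocks; it never deletes or merges $\any$'s, so the invariant $|\word{m}|_\any$ is preserved by every move the calculus licenses. Once you have reduced a bracketed type to $[\bW^{-1};\any^{\circ n};\bW^{-1}]$ with $n \geq 2$, there is no further calculus move available that would bring $n$ down to $1$; your proposed ``intermediate alternating zigzag types'' $[\bW^{-1};\any;\bW^{-1};\any;\cdots;\bW^{-1}]$ still have $n$ copies of $\any$ and collapsing their interior $\bW^{-1}$'s just returns you to $[\bW^{-1};\any^{\circ n};\bW^{-1}]$. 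Consequently the diagram on $\Z_0^{op}$ is \emph{not} naturally Thomason-equivalent to a constant diagram, your cofinality-plus-contractibility strategy never gets off the ground, and the ``main obstacle'' you flag (coherence of the reduction maps) is actually downstream of an obstacle you did not see.

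The paper's proof is structured precisely to avoid ever needing a pointwise identification of each $\word{m}(x,y)$ with $\word{3}(x,y)$. It introduces $F(\word{m}) = [\bW^{-1};\word{m};\bW^{-1}]$ and $G(\word{m}) = [\bW^{-1};\any^{\circ |\word{m}|_\any};\bW^{-1}]$, and the calculus is used only for the natural transformation $\psi : F \Rightarrow G$ (collapsing interior $\bW^{-1}$'s one at a time), which is genuinely corepresented and hence automatically coherent. The identification of $\colim_{\Z^{op}} \Nervei(G(\word{m})(x,y))$ with $\Nervei(\word{3}(x,y))$ is \emph{not} a calculus fact at all: it is a formal coend computation, via the ``ninja Yoneda lemma'' (\cref{ninja yoneda}) and the purely combinatorial isomorphism $\hom_\Z(\word{3},-) \cong \colim_{\word{m}\in\Z^{op}} \hom_\Z(G(\word{m}),-)$ in $\Fun(\Z,\Set)$. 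Finally, rather than trying to compute $\colim_{\Z^{op}} \word{m}(x,y)^\gpd$ directly, the paper uses a retraction $|\rho|$ of the map $|\ul\varphi|$ (induced by $F \Rightarrow \id_\Z$) together with the two-out-of-six property to deduce the result. If you want to salvage a cofinality-style argument, you would still need to supply the combinatorial cofinality of $\Z_0^{op} \hookrightarrow \Z^{op}$ and replace the contractibility heuristic (which as stated is not a valid argument — ``admitting equivalences to an object'' does not make a classifying space contractible) with an actual coend/Yoneda computation along the lines of the paper's step for $\beta$.
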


\subsection{Supporting material: relative $\infty$-categories}\label{subsection rel infty-cats for three-arrow calculi}

In this subsection, we give two results regarding relative $\infty$-categories which will be used in the proof of \cref{calculus gives reduction}.  Both concern \textit{corepresentation}, namely the effect of the functor
\[ \RelCatmp \xra{\Fun(-,\R)^\bW} \Cati \]
on certain data in $\RelCatmp$ (for a given relative $\infty$-category $(\R,\bW)$).

\begin{lem}\label{nat w.e. induces nat trans}
Given a pair of maps $\I \rra \J$ in $\RelCatimp$, a morphism between them in $\Funmp(\I,\J)^\bW$ induces, for any $(\R,\bW) \in \RelCatimp$, a natural transformation between the two induced functors
\[ \Funmp(\J,\R)^\bW \rra \Funmp(\I,\R)^\bW . \]
\end{lem}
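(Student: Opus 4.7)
The plan is to produce the claimed natural transformation by pairing the given morphism $\alpha$ with an internal composition functor, leveraging the enriched structure on $\RelCatimp$ over $\RelCati$.

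First, I would construct an internal composition functor
\[ c : \Funmp(\J,\R) \times \Funmp(\I,\J) \to \Funmp(\I,\R) \]
in $\RelCati$, which on objects sends $(H,K) \mapsto H \circ K$. In the unpointed case, this is simply composition in the cartesian closed $\infty$-category $\RelCati$ (see \cref{rnerves:define internal hom in relcats}). In the doubly-pointed case, it arises as the restriction of that composition along the pullback squares defining the $\Funp$ objects (see \cref{define enrichment and tensoring of doubly-pointed relcats over relcats}); this restriction is well-defined since the composite of two doubly-pointed maps is doubly-pointed, and since the composite of two doubly-pointed natural transformations (i.e.\ ones whose components at source and target are identities) is likewise doubly-pointed.

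Second, I would verify that $c$ restricts to a functor
\[ c : \Funmp(\J,\R)^\bW \times \Funmp(\I,\J)^\bW \to \Funmp(\I,\R)^\bW \]
on subcategories of weak equivalences. Given morphisms $H \we H'$ in $\Funmp(\J,\R)^\bW$ and $K \we K'$ in $\Funmp(\I,\J)^\bW$, the induced natural transformation $H \circ K \to H' \circ K'$ factors at each object $i \in \I$ through $H(K'(i))$: the first component $H(K(i)) \to H(K'(i))$ is the image under the relative functor $H$ of a component of $K \we K'$ and so lies in $\bW_\R$, while the second component $H(K'(i)) \to H'(K'(i))$ is itself a component of $H \we H'$ and so likewise lies in $\bW_\R$.

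Third, the given morphism $\alpha$ corresponds to a functor $[1] \to \Funmp(\I,\J)^\bW$ in $\Cati$. Combining with $c$ yields the composite
\[ \Funmp(\J,\R)^\bW \times [1] \xra{\id \times \alpha} \Funmp(\J,\R)^\bW \times \Funmp(\I,\J)^\bW \xra{c} \Funmp(\I,\R)^\bW, \]
which (by evaluating at the endpoints $0, 1 \in [1]$) is precisely a natural transformation between the two functors $(- \circ F), (- \circ G) : \Funmp(\J,\R)^\bW \rra \Funmp(\I,\R)^\bW$, as desired. There is no substantive obstacle: the construction is purely formal, boiling down to the existence of an internal composition in the $\RelCati$-enriched $\infty$-category $\RelCatimp$, with the only point requiring any real verification being that composition preserves weak equivalences in each variable separately.
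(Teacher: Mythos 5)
Your proof is correct, but it takes a genuinely different route from the paper's. The paper's proof works entirely through the \emph{tensoring adjunction}: the morphism $\alpha$ is transposed to a map $\I \tensoring [1]_\bW \to \J$ in $\RelCatimp$, precomposition by this map gives $\Funmp(\J,\R) \to \Funmp(\I \tensoring [1]_\bW,\R)$, and a second application of the tensoring adjunction (now in the contravariant variable) together with the identification $\Fun([1]_\bW, (\C^\Rel,\C^\bW)) \simeq \Fun([1],\C^\bW)$ immediately produces the required map $[1] \times \Funmp(\J,\R)^\bW \to \Funmp(\I,\R)^\bW$, with no separate check that weak equivalences are preserved --- that comes for free from the fact that all maps along the way are morphisms in $\RelCati$. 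You instead build the \emph{internal composition} morphism $c$ explicitly and verify by the ladder-diagram factorization $H(K(i)) \to H(K'(i)) \to H'(K'(i))$ that it respects the $\bW$-subcategories before pairing with $\alpha : [1] \to \Funmp(\I,\J)^\bW$. Both are manifestations of the same underlying $\RelCati$-enrichment; your version is more concrete and makes the compatibility with weak equivalences visible by hand, while the paper's version is terser precisely because it packages that compatibility into the adjunction. One small remark on efficiency: since the internal composition $c$ is by construction a morphism in $\RelCati$ (a relative functor), your Step 2 is in principle automatic from the enriched structure and the ladder argument is really a re-derivation of that fact --- though spelling it out is certainly harmless and clarifying.
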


\begin{proof}
First of all, the morphism in $\Funmp(\I,\J)^\bW$ is selected by a map $[1] \ra \Funmp(\I,\J)^\bW$; this is equivalent to a map
\[ [1]_\bW \ra \left( \Funmp(\I,\J)^\Rel , \Funmp(\I,\J)^\bW \right) \]
in $\RelCati$, which is adjoint to a map
\[ \I \tensoring [1]_\bW \ra \J \]
in $\RelCatimp$.  Then, for any $(\R,\bW) \in \RelCatimp$, composing with this map yields a functor
\begin{align*}
\Funmp(\J,\R)^\bW
& \ra \Funmp(\I \tensoring [1]_\bW , \R)^\bW \\
& \simeq \Fun \left( [1]_\bW , \left( \Funmp(\I,\R)^\Rel , \Funmp(\I,\R)^\bW \right) \right) \\
& \simeq \Fun \left( [1] , \Funmp(\I,\R)^\bW \right) ,
\end{align*}
which is adjoint to a map
\[ [1] \times \Funmp(\J,\R)^\bW \ra \Funmp(\I,\R)^\bW , \]
which selects a natural transformation between the two induced functors
\[ \Funmp(\J,\R)^\bW \rra \Funmp(\I,\R)^\bW , \]
as desired.
\end{proof}

\begin{lem}\label{compose w.e.'s}
Let $(\I,\bW_\I) \in \RelCatimp$, and form any pushout diagram
\[ \begin{tikzcd}
{[\bW]} \arrow{r} \arrow{d} & (\I,\bW_\I) \arrow{d} \\
{[\bW^{\circ 2}]} \arrow{r} & (\J,\bW_\J)
\end{tikzcd} \]
in $\RelCatmp$, where the left map is the unique map in $\RelCatp$.  Note that the two possible retractions $[\bW^{\circ 2}] \rra [\bW]$ in $\RelCatp$ of the given map induce retractions $(\J,\bW_\J) \rra (\I,\bW_\I)$ in $\RelCatimp$.  Then, for any $(\R,\bW_\R) \in \RelCatmp$, the induced map
\[ \Funmp(\J,\R)^\bW \ra \Funmp(\I,\R)^\bW \]
which becomes an equivalence under the functor $(-)^\gpd : \Cati \ra \S$, with inverse given by either map
\[ \left( \Funmp(\I,\R)^\bW \right)^\gpd \rra \left( \Funmp(\J,\R)^\bW \right)^\gpd \]
in $\S$ induced by one of the given retractions.
\end{lem}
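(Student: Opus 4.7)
The plan is to verify that $\iota^* = (\text{given map})^*$ and either retraction-induced map $r_i^*$ (for $i = 1, 2$) become mutually inverse equivalences after groupoid completion. Writing $a : [\bW] \to \I$ for the given map and $b : [\bW] \to [\bW^{\circ 2}]$ and $c_i : [\bW^{\circ 2}] \to [\bW]$ for the generating maps of $\RelCatp$, the pushout inserts a new object $m$ between the source and target of the weak equivalence in $\I$ picked out by $a$, subdividing it into two composable weak equivalences $\alpha_1, \alpha_2$ of $\J$. Because $c_i \circ b = \id_{[\bW]}$, the pushout universal property yields retractions $r_i : \J \to \I$ with $r_i \circ \iota = \id_\I$; consequently $\iota^* \circ r_i^* = \id$ on $\Funmp(\I, \R)^\bW$ already at the level of $\Cati$, and the only remaining task is to show that the opposite composite $r_i^* \circ \iota^* = (\iota \circ r_i)^*$ on $\Funmp(\J,\R)^\bW$ is naturally equivalent to the identity endofunctor after groupoid completion.

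For this I would exhibit a natural weak equivalence in $\Funmp(\J,\J)^\bW$ connecting $\iota \circ r_i$ and $\id_\J$, obtained by base change along the pushout. First, there is an evident such natural weak equivalence in $\Funmp([\bW^{\circ 2}], [\bW^{\circ 2}])^\bW$ connecting $b \circ c_i$ with $\id_{[\bW^{\circ 2}]}$: for $i = 1$ it is the map $b c_1 \Rightarrow \id$ whose component at the middle object is $\alpha_1$ (with identities at the endpoints), and for $i = 2$ it is the reverse-direction map $\id \Rightarrow b c_2$ whose middle component is $\alpha_2$. Via the tensoring adjunction of \cref{define enrichment and tensoring of doubly-pointed relcats over relcats}, this is equivalently a map $[\bW^{\circ 2}] \tensoring [1]_\bW \to [\bW^{\circ 2}]$ in $\RelCatp$ whose restriction along $[\bW] \tensoring [1]_\bW \to [\bW^{\circ 2}] \tensoring [1]_\bW$ factors through the projection $[\bW] \tensoring [1]_\bW \to [\bW]$ (since $b c_i b = b$, so the interpolation is constant at $b$). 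Tensoring with $[1]_\bW$ preserves pushouts, so I can glue this map against the constant homotopy $\I \tensoring [1]_\bW \to \I \xra{\iota} \J$ (which also restricts correctly to $[\bW] \tensoring [1]_\bW$) to produce the required map $\J \tensoring [1]_\bW \to \J$, i.e.\ the desired natural weak equivalence in $\Funmp(\J,\J)^\bW$ between $\iota \circ r_i$ and $\id_\J$.

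Finally, \cref{nat w.e. induces nat trans} applied to this natural weak equivalence yields a natural transformation in $\Cati$ between the endofunctors $\id$ and $(\iota r_i)^*$ of $\Funmp(\J,\R)^\bW$, in other words a functor $\Funmp(\J,\R)^\bW \times [1] \to \Funmp(\J,\R)^\bW$. Postcomposing with $(-)^\gpd : \Cati \to \S$ and using $[1]^\gpd \simeq \pt$ collapses the $[1]$-direction, producing an equivalence $\id \simeq (\iota r_i)^{*, \gpd}$ of endofunctors of $\Funmp(\J,\R)^{\bW,\gpd}$. Combined with the strict identity $\iota^* r_i^* = \id$ from the first paragraph, this shows that $\iota^*$ and $r_i^*$ induce mutually inverse equivalences on groupoid completions.

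The main obstacle is the construction of the $[\bW^{\circ 2}]$-level natural weak equivalence in a sufficiently structured way — namely as an honest map $[\bW^{\circ 2}] \tensoring [1]_\bW \to [\bW^{\circ 2}]$ of doubly-pointed relative $\infty$-categories whose restriction to the $[\bW]$-subobject is manifestly constant — so that it glues against the constant homotopy on the $\I$-factor under the pushout. Once this is packaged correctly, the rest of the argument is purely formal, driven by the contractibility $[1]^\gpd \simeq \pt$.
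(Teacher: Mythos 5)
Your proof is correct and takes essentially the same approach as the paper's. The paper merely asserts (the ``in turn'' step) that the natural weak equivalence connecting $b \circ c_i$ to $\id_{[\bW^{\circ 2}]}$ induces one connecting $\iota \circ r_i$ to $\id_\J$, whereas you supply the tensoring-and-gluing argument making this explicit; both proofs then conclude via \cref{nat w.e. induces nat trans} together with the fact that groupoid completion collapses the $[1]$-direction.
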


\begin{proof}
Note that both composites
\[ [\bW^{\circ 2}] \rra [\bW] \ra [\bW^{\circ 2}] \]
(of one of the two possible retractions followed by the given map) are connected to $\id_{[\bW^{\circ 2}]}$ by a map in
\[ \Funp([\bW^{\circ 2}],[\bW^{\circ 2}])^\bW . \]
In turn, both composites
\[ (\J,\bW_\J) \rra (\I,\bW_\I) \ra (\J,\bW_\J) \]
are connected to $\id_{(\J,\bW_\J)}$ by a map in $\Funmp(\J,\J)^\bW$.  Hence, the result follows from Lemmas \ref{nat w.e. induces nat trans} \and \Cref{rnerves:nat trans induces equivce betw maps on gpd-complns}.
\end{proof}

\subsection{Supporting material: co/ends}\label{subsection co-ends for htpical three-arrow calculi}

In this subsection, we give a few results regarding ends and coends which will be used in the proof of \cref{calculus gives reduction}.  For a brief review of these universal constructions in the $\infty$-categorical setting, we refer the reader to \cite[\sec 2]{GHN}.

We begin by recalling a formula for the space of natural transformations between two functors.

\begin{lem}\label{nat trans as end}
Given any $\C,\D \in \Cati$ and any $F,G \in \Fun(\C,\D)$, we have a canonical equivalence
\[ \hom_{\Fun(\C,\D)}(F,G) \simeq \int_{c \in \C} \hom_\D(F(c),G(c)) . \]
\end{lem}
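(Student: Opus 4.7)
The plan is to exhibit both sides as limits over the twisted arrow $\infty$-category $\mathrm{Tw}(\C)$ and identify them. By the definition of ends reviewed in \cite[\S 2]{GHN}, the right-hand side is by construction
\[ \int_{c \in \C} \hom_\D(F(c),G(c)) \simeq \lim \left( \mathrm{Tw}(\C) \to \C^{op} \times \C \xra{\hom_\D(F(-),G(-))} \S \right) , \]
where the map $\mathrm{Tw}(\C) \to \C^{op} \times \C$ is the canonical right fibration classifying the hom-functor $\hom_\C$. So the lemma reduces to exhibiting the mapping space $\hom_{\Fun(\C,\D)}(F,G)$ as this same limit.

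For this, I would argue that a morphism $F \to G$ in $\Fun(\C,\D)$---equivalently, a functor $\C \times [1] \to \D$ extending $F \sqcup G$ along $\C \times \{0,1\} \hookra \C \times [1]$---is precisely the data of a wedge (i.e., a cone) over the indicated diagram on $\mathrm{Tw}(\C)$. Informally, such a wedge assigns to each object $(f : c \to c')$ of $\mathrm{Tw}(\C)$ a morphism $F(c) \to G(c')$ in $\D$, together with coherences making the expected naturality squares commute; this is exactly the data of a natural transformation, re-encoded at every morphism of $\C$ via pre- and post-composition. At the $\infty$-categorical level, the identification can be made precise by constructing a functor $\mathrm{Tw}(\C) \to \C \times [1]$ (sending $(c \to c')$ to the arrow from $(c,0)$ to $(c',1)$) and verifying that pullback along it induces the claimed equivalence.

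The main obstacle will be promoting the above identification from the level of objects to one of $\infty$-categorical coherences. In fact, this is precisely the content of \cite[Proposition 5.1]{GHN}, so in practice I would cite that result directly. A self-contained alternative would be to test both sides against an arbitrary $X \in \S$ via $\hom_\S(X,-)$: since both ends and functor-category mapping spaces commute with limits in the target variable, this reduces the comparison to the case $\D = \S$, where the result becomes a direct instance of the co-Yoneda lemma for presheaves on $\C$.
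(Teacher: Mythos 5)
Your proposal ultimately defers to \cite[Proposition 5.1]{GHN}, which is exactly what the paper does (it cites that result together with \cite[Proposition 2.3]{SaulHodge} and offers no further argument), so you have identified the intended proof. The surrounding sketch via $\mathrm{Tw}(\C)$ is a reasonable gloss on what that citation proves, though the proposed ``reduction to $\D = \S$'' alternative is stated too loosely to stand on its own---$\D$ is an arbitrary $\infty$-category, not assumed cotensored over $\S$, so one would need to route through the Yoneda embedding $\D \hookrightarrow \Fun(\D^{op},\S)$ and check that both sides are computed levelwise, which is more work than the one-line appeal to co-Yoneda suggests.
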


\begin{proof}
This appears as \cite[Proposition 2.3]{SaulHodge} (and as \cite[Proposition 5.1]{GHN}).
\end{proof}

We now prove a ``ninja Yoneda lemma''.\footnote{The name is apparently due to Leinster (see \cite[Remark 2.2]{FoscoCoend}).}

\begin{lem}\label{ninja yoneda}
If $\C \in \Cati$ is an $\infty$-category equipped with a tensoring $- \tensoring - : \C \times \S \ra \C$, then for any functor $\I^{op} \xra{F} \C$, we have an equivalence
\[ F(-) \simeq \int^{i \in \I} F(i) \tensoring \hom_\I(- , i) \]
in $\Fun(\I^{op},\C)$.
\end{lem}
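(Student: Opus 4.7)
The plan is to appeal to the Yoneda lemma in $\C$ to reduce the assertion to an end/coend manipulation. Since the embedding $\C \hookrightarrow \Fun(\C, \S)^{op}$ sending $c \mapsto \hom_\C(c, -)$ is fully faithful, two functors $G_1, G_2 : \I^{op} \to \C$ are equivalent if and only if the corepresented functors $(j, c) \mapsto \hom_\C(G_i(j), c)$ are equivalent as functors $\I \times \C \to \S$. Thus it suffices to exhibit a natural equivalence
\[ \hom_\C\!\left( \int^{i \in \I} F(i) \tensoring \hom_\I(j, i), \, c \right) \simeq \hom_\C(F(j), c) \]
of functors $\I \times \C \to \S$.

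I would build this equivalence by chaining four canonical natural equivalences. First, since $\hom_\C(-, c)$ carries colimits (hence coends) in the first variable to limits (hence ends), the universal property of the coend gives
\[ \hom_\C\!\left( \int^{i} F(i) \tensoring \hom_\I(j, i),\, c \right) \simeq \int_{i \in \I} \hom_\C(F(i) \tensoring \hom_\I(j, i),\, c). \]
Second, the defining tensoring--hom adjunction rewrites the integrand as $\hom_\S(\hom_\I(j, i),\, \hom_\C(F(i), c))$. Third, \cref{nat trans as end} (applied in reverse) identifies the resulting end with the mapping space $\hom_{\Fun(\I, \S)}(\hom_\I(j, -),\, \hom_\C(F(-), c))$. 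Fourth, the ordinary Yoneda lemma for the representable copresheaf $\hom_\I(j, -) \in \Fun(\I, \S)$ evaluates this mapping space to $\hom_\C(F(j), c)$.

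The main technical care will be in ensuring naturality in both $j$ and $c$ throughout, so that the composite lives in $\Fun(\I \times \C, \S)$ rather than giving merely a pointwise family of equivalences. Each of the four steps is natural in both parameters by construction --- the coend-to-end exchange and the tensoring adjunction are natural equivalences of hom-bifunctors; \cref{nat trans as end} is natural in both of its arguments; and the Yoneda equivalence is natural in the representable's argument --- but packaging the chain coherently in the $\infty$-categorical setting is cleanest if one works throughout in $\Fun(\I \times \C, \S)$ rather than passing through pointwise computations.
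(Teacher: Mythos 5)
Your proposal is correct and follows essentially the same route as the paper: reduce via Yoneda to a computation of hom-spaces, then chain the four equivalences (coend-to-end, tensoring adjunction, \cref{nat trans as end}, Yoneda for representables). The only cosmetic difference is that you frame the initial reduction explicitly via full faithfulness of the Yoneda embedding and flag the naturality bookkeeping up front, whereas the paper simply invokes ``by the Yoneda lemma'' at the end.
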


\begin{proof}
For any test objects $j \in \I^{op}$ and $Y \in \C$, we have a string of natural equivalences
\begin{align*}
\hom_\C \left( \int^{i \in \I} F(i) \tensoring \hom_\I(j,i) , Y \right)
& \simeq \int_{i \in \I} \hom_\C ( F(i) \tensoring \hom_\I(j,i) , Y ) \\
& \simeq \int_{i \in \I} \hom_\S ( \hom_\I(j,i) , \hom_\C(F(i) , Y)) \\
& \simeq \hom_{\Fun(\I,\S)}( \hom_\I(j,-) , \hom_\C(F(-),Y)) \\
& \simeq \hom_\C(F(j),Y) ,
\end{align*}
where the first line follows from the definition of a coend as a colimit (see e.g.\! \cite[Definition 2.5]{GHN}), the second line uses the tensoring, the third line follows from \cref{nat trans as end}, and the last line follows from the usual Yoneda lemma (Proposition T.5.1.3.1).  Hence, again by the Yoneda lemma, we obtain an equivalence
\[ F(j) \simeq \int^{i \in \I} F(i) \tensoring \hom_\J(j,i) \]
which is natural in $j \in \I^{op}$.
\end{proof}

Then, we have the following result on the preservation of colimits.\footnote{\cref{wtd colimit is left adjoint in the weight} is actually implicitly about weighted colimits (see \cite[Definition 2.7]{GHN}).}

\begin{lem}\label{wtd colimit is left adjoint in the weight}
If $\C \in \Cati$ is an $\infty$-category equipped with a tensoring $- \tensoring - : \C \times \S \ra \C$, then for any functor $\I^{op} \xra{F} \C$, the functor
\[ \Fun(\I,\S) \xra{ \int^{i \in \I} F(i) \tensoring (-)(i)} \C \]
is a left adjoint.
\end{lem}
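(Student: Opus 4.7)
The plan is to construct the right adjoint explicitly. Define
\[ \C \xra{R} \Fun(\I,\S) , \qquad R(Y) = \hom_\C(F(-),Y) , \]
i.e.\ for $Y \in \C$ we take the functor $\I \to \S$ sending $i \mapsto \hom_\C(F(i),Y)$ (this is functorial in $i \in \I$ because $F$ is contravariant on $\I$ and $\hom_\C(-,Y)$ is contravariant on $\C$). I would then verify the adjunction by exhibiting a natural equivalence
\[ \hom_\C\!\left( \int^{i \in \I} F(i) \tensoring G(i) ,\, Y \right) \simeq \hom_{\Fun(\I,\S)}(G , R(Y)) \]
for $G \in \Fun(\I,\S)$ and $Y \in \C$.

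The verification is a direct chain of equivalences, assembling ingredients that have already been recorded in this subsection. First, since coends are computed as colimits (see \cite[Definition 2.5]{GHN}) and $\hom_\C(-,Y)$ sends colimits in $\C$ to limits in $\S$, we have
\[ \hom_\C\!\left( \int^{i \in \I} F(i) \tensoring G(i),\, Y \right) \simeq \int_{i \in \I} \hom_\C(F(i) \tensoring G(i) , Y). \]
Next, the defining adjunction of the tensoring $- \tensoring - : \C \times \S \ra \C$ gives a natural equivalence
\[ \int_{i \in \I} \hom_\C(F(i) \tensoring G(i), Y) \simeq \int_{i \in \I} \hom_\S(G(i), \hom_\C(F(i),Y)). \]
Finally, applying \cref{nat trans as end} to the two functors $G , R(Y) \in \Fun(\I,\S)$ identifies this end with $\hom_{\Fun(\I,\S)}(G, R(Y))$, completing the natural equivalence.

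Each individual step is essentially formal, so I don't anticipate a serious obstacle; the only place that demands care is ensuring the naturality of each step in both variables $G$ and $Y$ (so that the resulting equivalence really exhibits an adjunction rather than a mere pointwise equivalence of mapping spaces), but this is immediate since every equivalence used is natural in its respective inputs. Note that the argument also specializes to the ninja Yoneda lemma of \cref{ninja yoneda}: taking $Y = \int^{i \in \I} F(i) \tensoring \hom_\I(-,i)$ and applying this adjunction together with Yoneda recovers that identity.
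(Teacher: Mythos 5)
Your proposal is correct and is essentially the paper's proof: the core is the identical chain of three equivalences (coends-to-ends via the mapping-space functor, the tensoring adjunction, and \cref{nat trans as end}). The only packaging difference is that the paper invokes the criterion that a functor is a left adjoint precisely when each presheaf $\hom_\C\bigl(\int^{i \in \I} F(i) \tensoring (-)(i), c\bigr)$ is representable, so it never needs to discuss naturality in $Y$; you instead name the right adjoint $R(Y) = \hom_\C(F(-),Y)$ up front and flag the (routine) naturality check. Both are standard, and your closing observation that the adjunction specializes to recover \cref{ninja yoneda} is a nice sanity check not made explicit in the paper.
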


\begin{proof}
It suffices to check that for every $c \in \C$, the functor
\[ \Fun(\I,\S)^{op} \xra{\hom_\C \left( \int^{i \in \I} F(i) \tensoring (-)(i) , c \right)} \S \]
is representable.  For this, given any $W \in \Fun(\I,\S)$ we compute that
\begin{align*}
\hom_\C \left( \int^{i \in \I} F(i) \tensoring W(i) , c \right)
& \simeq \int_{i \in \I} \hom_\C(F(i) \tensoring W(i) , c) \\
& \simeq \int_{i \in \I} \hom_\S(W(i) , \hom_\C(F(i),c)) \\
& \simeq \hom_{\Fun(\I,\S)} ( W , \hom_\C(F(-),c)),
\end{align*}
where the first line follows from the definition of a co/end as a co/limit (again see e.g.\! \cite[Definition 2.5]{GHN}), the second line uses the tensoring, and the last line follows from \cref{nat trans as end}.
\end{proof}

\subsection{The proof of \cref{calculus gives reduction}}\label{subsection proof of htpical three-arrow calculi}

Having laid out the necessary supporting material in the previous two subsection, we now proceed to prove the fundamental theorem of homotopical three-arrow calculi (\ref{calculus gives reduction}).  This proof is based closely on that of \cite[Proposition 6.2(i)]{DKCalc}, although we give many more details (recall \cref{intro rem more details than DK}).

\begin{proof}[Proof of \cref{calculus gives reduction}]
We will construct a commutative diagram
\[ \begin{tikzcd}
\left| \Nervei(\word{3}(x,y)) \right| \arrow{r}{|\beta|}[swap]{\sim} \arrow{d}[swap]{|\alpha|} & \left| \colim_{\word{m} \in \Z^{op}} \Nervei(G(\word{m})(x,y)) \right| \arrow{d}{|\ul{\psi}|}[sloped, anchor=north]{\sim} \\
\left| \colim_{\word{m} \in \Z^{op}} \Nervei(\word{m}(x,y)) \right| \arrow{r}{|\ul{\varphi}|} & \left| \colim_{\word{m} \in \Z^{op}} \Nervei(F(\word{m})(x,y)) \right| \arrow[bend left=20]{l}{|\rho|}
\end{tikzcd} \]
in $\S$, i.e.\! a commutative square in which the bottom arrow is equipped with a retraction and in which moreover the top and right map are equivalences.  Note that by definition, the object on the bottom left is precisely $\left| \homhamR(x,y) \right|$; the left map will be the natural map referred to in the statement of the result.  The equivalences in $\S$ satisfy the two-out-of-six property, and applying this to the composable sequence of arrows $[|\alpha|;|\ul{\varphi}|;|\rho|]$, we deduce that $|\alpha|$ is also an equivalence, proving the claim.

We will accomplish this by running through the following sequence of tasks.
\begin{enumerate}
\item Define the two objects on the right.
\item Define the maps in the diagram.
\item Explain why the square commutes.
\item Explain why $|\rho|$ gives a retraction of $|\ul{\varphi}|$.
\item Explain why the map $|\beta|$ is an equivalence.
\item Explain why the map $|\ul{\psi}|$ is an equivalence.
\end{enumerate}

We now proceed to accomplish these tasks in order.
\begin{enumerate}

\item We define endofunctors $F, G \in \Fun(\Z , \Z)$ by the formulas
\[ F(\word{m}) = [-1;\word{m};-1] \]
and
\[ G(\word{m}) = [-1 ; \any^{\circ |\word{m}|_\any} ; -1] . \]
Then, the object in the upper right is given by
\[ \left| \colim \left( \Z^{op} \xra{G^{op}} \Z^{op} \xra{\Nervei((-)(x,y))} s\S \right) \right| , \]
and the object in the bottom right is given by
\[ \left| \colim \left( \Z^{op} \xra{F^{op}} \Z^{op} \xra{\Nervei((-)(x,y))} s\S \right) \right| . \]

\item We define the two evident natural transformations $F \xra{\varphi} \id_{\Z}$ (given by collapsing the two newly added copies of $[\bW^{-1}]$) and $F \xra{\psi} G$ (given by collapsing all internal copies of $[\bW^{-1}]$) in $\Fun(\Z,\Z)$; these induce natural transformations $\id_{\Z^{op}} \xra{\varphi^{op}} F^{op}$ and $G^{op} \xra{\psi^{op}} F^{op}$ in $\Fun(\Z^{op},\Z^{op})$.\footnote{Recall that the involution $(-)^{op} : \Cati \ra \Cati$ is \textit{contravariant} on 2-morphisms.}  We then define the maps in the diagram as follows.

\begin{itemize}

\item The left map is obtained by taking the geometric realization of the inclusion
\[ \Nervei(\word{3}(x,y)) \xra{\alpha} \homhamR(x,y) = \colim_{\word{m} \in \Z^{op}} \Nervei(\word{m}(x,y)) \]
into the colimit at the object $\word{3} \in \Z^{op}$.

\item The top map is obtained by taking the geometric realization of the inclusion
\[ \Nervei(\word{3}(x,y)) \simeq \Nervei(G([\any])(x,y)) \xra{\beta} \colim_{\word{m} \in \Z^{op}} \Nervei(G(\word{m})(x,y)) \]
into the colimit at the object $[\any] \in \Z^{op}$.  (Note that $\word{3} \cong G([\any])$ in $\Z^{op}$.)

\item The right map is obtained by taking the geometric realization of the map
\[ \colim_{\word{m} \in \Z^{op}} \Nervei(G(\word{m})(x,y)) \xra{\ul{\psi}} \colim_{\word{m} \in \Z^{op}} \Nervei(F(\word{m})(x,y)) \]
on colimits induced by the natural transformation $\id_{\Nervei((-)(x,y))} \circ \psi^{op}$ in $\Fun(\Z^{op},s\S)$.

\item The bottom map in the square (i.e.\! the straight bottom map) is obtained by taking the geometric realization of the map
\[ \homhamR(x,y) = \colim_{\word{m} \in \Z^{op}} \Nervei(\word{m}(x,y)) \xra{\ul{\varphi}} \colim_{\word{m} \in \Z^{op}} \Nervei(F(\word{m})(x,y)) \]
on colimits induced by the natural transformation $\id_{\Nervei((-)(x,y))} \circ \varphi^{op}$ in $\Fun(\Z^{op},s\S)$.

\item The curved map is obtained by taking the geometric realization of the map
\[ \colim_{\word{m} \in \Z^{op}} \Nervei(F(\word{m})(x,y)) \xra{\rho} \colim_{\word{m} \in \Z^{op}} \Nervei(\word{m}(x,y)) = \homhamR(x,y) \]
on colimits induced by the functor
\[ \Fun(\Z^{op},s\S) \xla{- \circ F^{op}} \Fun(\Z^{op},s\S) . \]

\end{itemize}

\item The upper composite in the square is given by the geometric realization of the composite
\[ \Nerve(\word{3}(x,y)) \simeq \Nervei(G([\any])(x,y)) \xra[\sim]{\Nervei((\psi^{op}_{[\any]})(x,y))} \Nervei(F([\any])(x,y)) \ra \colim_{\word{m} \in \Z^{op}} \Nervei(F(\word{m})(x,y)) \]
of the equivalence induced by the component of $\psi^{op}$ at the object $[\any] \in \Z^{op}$ (which is an isomorphism in $\Z^{op}$) followed by the inclusion into the colimit at $[\any]$.  So, via the (unique) identification $\word{3} \cong F([\any])$, we can identify this composite with the inclusion into the colimit at $[\any] \in \Z^{op}$.

Meanwhile, the lower composite in the square is given by the geometric realization of the composite
\[ \Nervei(\word{3}(x,y)) \xra{\Nervei((\varphi^{op}_{\word{3}})(x,y))} \Nervei(F(\word{3})(x,y)) \ra \colim_{\word{m} \in \Z^{op}} \Nervei(F(\word{m})(x,y)) \]
of the map induced by the component of $\varphi^{op}$ at $\word{3}$ followed by the inclusion into the colimit at $\word{3}$.

Now, the map $F(\word{3}) \xra{\varphi_\word{3}} \word{3}$ in $\Z$ is given by
\[ \begin{tikzcd}
s_{F(\word{3})} \arrow[maps to]{rd} & \bullet \arrow{l}[swap]{\approx} \arrow[maps to]{d} & \bullet \arrow{l}[swap]{\approx} \arrow{r} \arrow[maps to]{d} & \bullet \arrow[maps to]{d} & \bullet \arrow{l}[swap]{\approx} \arrow[maps to]{d} & t_{F(\word{3})} \arrow{l}[swap]{\approx} \arrow[maps to]{ld} \\
& s_\word{3} & \bullet \arrow{r} \arrow{l}[swap]{\approx} & \bullet & t_\word{3} . \arrow{l}[swap]{\approx}
\end{tikzcd} \]
On the other hand, applying $F$ to the unique map $\word{3} \xra{\gamma} [\any]$ in $\Z$, we obtain a map $F(\word{3}) \xra{F(\gamma)} F([\any]) \cong \word{3}$ in $\Z$ given by
\[ \begin{tikzcd}
s_{F(\word{3})} \arrow[maps to]{rd} & \bullet \arrow{l}[swap]{\approx} \arrow[maps to]{rd} & \bullet \arrow{l}[swap]{\approx} \arrow{r} \arrow[maps to]{d} & \bullet \arrow[maps to]{d} & \bullet \arrow{l}[swap]{\approx} \arrow[maps to]{ld} & t_{F(\word{3})} \arrow{l}[swap]{\approx} \arrow[maps to]{ld} \\
& s_\word{3} & \bullet \arrow{r} \arrow{l}[swap]{\approx} & \bullet & t_\word{3} . \arrow{l}[swap]{\approx}
\end{tikzcd} \]
which corepresents a map
\[ \Nervei(\word{3}(x,y)) \simeq \Nervei(F([\any])(x,y)) \xra{\Nervei((F(\gamma))(x,y))} \Nervei(F(\word{3})(x,y)) \]
in $s\S$ which participates in the diagram
\[ \Z^{op} \xra{F^{op}} \Z^{op} \xra{\Nervei((-)(x,y))} s\S \]
defining $\colim_{\word{m} \in \Z^{op}} \Nervei(F(\word{m})(x,y))$.  So, in order to witness the commutativity of the square, it suffices to obtain an equivalence between the two maps
\[ \left| \Nervei((\varphi^{op}_\word{3})(x,y)) \right| , \left| \Nervei((F(\gamma))(x,y)) \right| \in \hom_\S \left( \left| \Nervei(\word{3}(x,y)) \right| , \left| \Nervei(F(\word{3})(x,y)) \right| \right) . \]
But there is an evident cospan in $\Funp(F(\word{3}) , \word{3})^\bW$ between the two maps $\varphi_\word{3}$ and $F(\gamma)$, so this follows from \cref{nat w.e. induces nat trans}, \cref{rnerves:nat trans induces equivce betw maps on gpd-complns}, and \cref{rnerves:groupoid-completion of CSSs}.% \footnote{\color{blue} combine these two into ``Lemmas''?  i think i should (here and everywhere...)}

\item The fact that $| \rho | \circ |\ul{\varphi}| \simeq \id_{\left| \colim_{\word{m} \in \Z^{op}} \Nervei(\word{m}(x,y)) \right| }$ follows from applying \cref{gr:triangle of colimits} to the diagram
\[ \begin{tikzcd}[column sep=2cm]
\Z^{op} \arrow[bend left=50]{r}{\id_{\Z^{op}}}[swap, pos=0.2, transform canvas={yshift=-1em}]{\left. \varphi^{op} \right\Downarrow} \arrow[bend right=50]{r}[swap]{F^{op}} & \Z^{op} \arrow{r}{((-)(x,y))^\gpd} & \S
\end{tikzcd} \]
and invoking \cref{rnerves:groupoid-completion of CSSs} to obtain a retraction diagram
\[ \begin{tikzcd}[row sep=0.5cm]
\colim((-)^\gpd \circ \Nervei((-)(x,y)) \circ \id_{\Z^{op}} ) \arrow{rd}[sloped]{\sim} \arrow{dd}[swap]{|\ul{\varphi}|} \\
& \colim((-)^\gpd \circ \Nervei((-)(x,y)) ) . \\
\colim((-)^\gpd \circ \Nervei((-)(x,y)) \circ F^{op} ) \arrow{ru}[swap, sloped]{|\rho|}
\end{tikzcd} \]

\item\label{step using two results on coends}
 It is a straightforward exercise to check that for any $\word{m}' \in \Z$, the map
\[ \hom_{\Z}(\word{3} , \word{m}') \simeq \hom_{\Z}(G([\any]) , \word{m}' ) \ra \colim_{\word{m} \in \Z^{op}} \hom_{\Z} ( G(\word{m}) , \word{m}') \]
is an isomorphism: in other words, the map
\[ \hom_{\Z}(\word{3} , -) \ra \colim_{\word{m} \in \Z^{op}} \hom_{\Z}(G(\word{m}) , - ) \]
is an equivalence in $\Fun(\Z,\Set) \subset \Fun(\Z,\S)$.  Using this, and denoting by $- \tensoring - : s\S \times \S \ra s\S$ the evident tensoring
\[ s\S \times \S \xra{\id_{s\S} \times \const} s\S \times s\S \xra{- \times - } s\S , \]
we obtain the map
\[ \Nervei(\word{3}(x,y)) \xra{\beta} \colim_{\word{m} \in \Z^{op}} \Nervei(G(\word{m})(x,y)) \]
as string of equivalences
\begin{align*}
\Nervei ( \word{3} (x,y))
& \simeq \int^{\word{m}' \in \Z} \Nervei( \word{m}'(x,y)) \tensoring \hom_{\Z}(\word{3},\word{m}') \\
& = \int^{\Z} \Nervei((-)(x,y)) \tensoring \hom_{\Z}(\word{3} , -) \\
& \xra{\sim} \int^{\Z} \Nervei( (-) (x,y)) \tensoring \left( \colim_{\word{m} \in \Z^{op}}^{\Fun(\Z,\S)} \hom_{\Z}(G(\word{m}) , - ) \right) \\
& \simeq \colim_{\word{m} \in \Z^{op}}^{s\S} \left( \int^{\Z} \Nervei((-)(x,y)) \tensoring \hom_{\Z}(G(\word{m}) , - ) \right) \\
& = \colim_{\word{m} \in \Z^{op}}^{s\S} \left( \int^{\word{m}' \in \Z} \Nervei(\word{m}'(x,y)) \tensoring \hom_{\Z}(G(\word{m}) , \word{m}') \right) \\
& \simeq \colim_{\word{m} \in \Z^{op}}^{s\S} \Nervei(G(\word{m})(x,y)) \\
\end{align*}
in $s\S$, in which
\begin{itemize}
\item the second and fifth lines are purely for notational convenience,
\item we apply to the functor
\[ \Z^{op} \xra{\Nervei((-)(x,y))} s\S \]
\begin{itemize}
\item \cref{ninja yoneda} to obtain the first line,
\item \cref{wtd colimit is left adjoint in the weight} to obtain the fourth line, and
\item \cref{ninja yoneda} again to obtain the last line,
\end{itemize}
and
\item the third line follows from the equivalence in $\Fun(\Z,\S)$ obtained above.
\end{itemize}

the first and last lines are obtained from \cref{ninja yoneda} and the fourth line is obtained from \cref{wtd colimit is left adjoint in the weight}, all applied to the functor
\[ \Z^{op} \xra{\Nervei((-)(x,y))} s\S . \]
(So in fact, the map $\beta$ itself is already an equivalence in $s\S$ (i.e.\! before geometric realization).)

\item We claim that for every $\word{m} \in \Z^{op}$ the map
\[ \Nervei(G(\word{m})(x,y)) \xra{\Nervei((\psi^{op}_\word{m})(x,y))} \Nervei(F(\word{m})(x,y)) \]
in $s\S$ becomes an equivalence after geometric realization.  This follows from an analysis of the corepresenting map $F(\word{m}) \xra{\psi_{\word{m}}} G(\word{m})$ in $\Z \subset \RelCati$: it can be obtained as a composite
\[ F(\word{m}) = \word{m}'_0 \ra \word{m}'_1 \ra \cdots \ra \word{m}'_{|\word{m}|_{\bW^{-1}}-1} \ra \word{m}'_{|\word{m}|_{\bW^{-1}}} = G(\word{m}) \]
in $\Z$, in which each $\word{m}'_i$ is obtained from $\word{m}'_{i-1}$ by omitting one of the internal appearances of $\bW^{-1}$ in $F(\word{m})$, and the corresponding map $\word{m}'_i \ra \word{m}'_{i+1}$ is obtained by collapsing this copy of $\bW^{-1}$ to an identity map.  Each map
\[ \Nervei(\word{m}'_i(x,y)) \ra \Nervei(\word{m}'_{i-1}(x,y)) \]
in $s\S$ becomes an equivalence after geometric realization, by \cref{compose w.e.'s} when the about-to-be-omitted appearance of $\bW^{-1}$ in $\word{m}'_{i-1}$ is adjacent to another appearance of $\bW^{-1}$, and by applying the definition of $(\R,\bW)$ admitting a homotopical three-arrow calculus (\cref{define calculus}) to (either one or two iterations, depending on the shape of $\word{m}'_{i-1}$, of) the combination of \cref{concatenated zigzags} and \cref{gr:invce of two-sided Gr}.  Hence, the composite map 
\[ \Nervei(G(\word{m})(x,y)) = \Nervei(\word{m}'_{|\word{m}|_{\bW^{-1}}}(x,y)) \ra \cdots \ra \Nervei(\word{m}'_0(x,y)) = \Nervei(F(\word{m})(x,y)) , \]
which is precisely the map $\Nervei((\psi^{op}_\word{m})(x,y))$, does indeed become an equivalence upon geometric realization as well.  Then, since colimits commute, it follows that the induced map
\[ \left| \colim_{\word{m}' \in \Z^{op}} \Nervei(G(\word{m}')(x,y)) \right| \xra{|\ul{\psi}|} \left| \colim_{\word{m}' \in \Z^{op}} \Nervei(F(\word{m}')(x,y)) \right| \]
is an equivalence in $\S$. \qedhere

\end{enumerate}

\end{proof}

\section{Hammock localizations of relative $\infty$-categories}\label{section hammocks}

In \cref{section zigzags and hammocks}, given a relative $\infty$-category $(\R,\bW)$ and a pair of objects $x,y \in \R$, we defined the corresponding hammock simplicial space
\[ \homhamR(x,y) \in s\S \]
(see \cref{define hammocks}).  In this section, we proceed to \textit{globalize} this construction, assembling the various hammock simplicial spaces of $(\R,\bW)$ into a Segal simplicial space -- and thence a $s\S$-enriched $\infty$-category -- whose compositions encode the \textit{concatenation} of zigzags in $(\R,\bW)$.

The bulk of the construction of the hammock localization consists in constructing the \textit{pre}-hammock localization: this will be a Segal simplicial space
\[ \preham(\R,\bW) \in \SsS \subset s(s\S) , \]
whose $n\th$ level is given by the colimit
\[ {\colim}^{s\S}_{(\word{m}_1,\ldots,\word{m}_n) \in (\Z^{op})^{\times n}} \Nervei \left( \Fun( [\word{m}_1; \ldots ; \word{m}_n] , \R)^\bW \right) . \]
For clarity, we proceed in stages.

First, we build an object which simultaneously corepresents
\begin{itemizesmall}
\item all possible sequences (of any length) of composable zigzags, and
\item all possible concatenations among these sequences.
\end{itemizesmall}

\begin{constr}\label{main constrn for hammock localizn}
Observe that $\Z \in \Cat$ is a monoid object, i.e.\! a monoidal category: its multiplication is given by the concatenation functor
\[ \Z \times \Z \xra{[-;-]} \Z , \]
and the unit map $\pt_\Cat \ra \Z$ selects the terminal object $[\es] \in \Z$.\footnote{In fact, we can even consider $\Z$ as a monoid object in $\strcat$ (i.e.\! a \textit{strict} monoidal category), but this is unnecessary for our purposes.}  We can thus define its bar construction
\[ \bD^{op} \xra{\Bar(\Z)_\bullet} \Cat , \]
which has $\Bar(\Z)_n = \Z^{\times n}$ (so that $\Bar(\Z)_0 = \Z^{\times 0} = \pt_\Cat$), with face maps given by concatenation and with degeneracy maps given by the unit.  This admits an \textit{oplax} natural transformation to the functor
\[ \bD^{op} \xra{\const(\RelCat)} \Cat , \]
which we encode as a commutative triangle
\[ \begin{tikzcd}
\Grop(\Bar(\Z)_\bullet) \arrow{rr} \arrow{rd} & & \RelCat \times \bD \arrow{ld} \\
& \bD
\end{tikzcd} \]
in $\Cat$ (recall \cref{gr:define op/lax nat trans betw fctrs to Cati} and \cref{gr:Gr of a constant functor}): in simplicial degree $n$, this is given by the iterated concatenation functor
\[ \Bar(\Z)_n = \Z^{\times n} \xra{[-;\cdots;-]} \Z \hookra \RelCatp \ra \RelCat \]
(which in degree 0 is simply the composite
\[ \{ [\es] \} \hookra \RelCatp \ra \RelCat , \]
i.e.\! the inclusion of the terminal object $\{ \pt_\RelCat \} \hookra \RelCat$).\footnote{The reason that we must compose with the forgetful functor $\RelCatp \ra \RelCat$ is that the oplax structure maps (e.g.\! the inclusion $\word{m}_1 \hookra [\word{m}_1;\word{m}_2]$) do not respect the double-pointings.}\footnote{It is also true that for a monoidal ($\infty$-)category $\C$ whose unit object is terminal, the bar construction $\Bar(\C)_\bullet$ admits a canonical \textit{lax} natural transformation to $\const(\C)$, whose components are again given by the iterated monoidal product.  But this is distinct from what we seek here.}  Taking opposites, we obtain a commutative triangle
\[ \begin{tikzcd}
\Gr(\Bar(\Z^{op})_\bullet) \arrow{rr} \arrow{rd} & & \RelCat^{op} \times \bD^{op} \arrow{ld} \\
& \bD^{op}
\end{tikzcd} \]
in $\Cat$, which now encodes a \textit{lax} natural transformation from the bar construction
\[ \bD^{op} \xra{\Bar(\Z^{op})_\bullet} \Cat \]
on the monoid object $\Z^{op} \in \Cat$ (note that the involution $(-)^{op} : \Cat \xra{\sim} \Cat$ is covariant) to the functor
\[ \bD^{op} \xra{\const(\RelCat^{op})} \Cat . \]
\end{constr}

We now map into an arbitrary relative $\infty$-category and extract the indicated colimits, all in a functorial way.

\begin{constr}\label{map all-sequences-of-zigzags-at-once corepresenter into a rel infty-cat}
A relative $\infty$-category $(\R,\bW)$ represents a composite functor
\[ \RelCat \hookra \RelCati \xra{\Fun(-,\R)^\bW} \Cati \xra[\sim]{\Nervei} \CSS \hookra s\S . \]
Considering this as a natural transformation $\const(\RelCat^{op}) \ra \const(s\S)$ in $\Fun(\bD^{op},\Cati)$, we can postcompose it with the lax natural transformation obtained in \cref{main constrn for hammock localizn}, yielding a composite lax natural transformation encoded by the diagram
\[ \begin{tikzcd}
\Gr(\Bar(\Z^{op})_\bullet) \arrow{rr} \arrow{rrd} & & \RelCat^{op} \times \bD^{op} \arrow{rrrr}{\Nervei \left( \Fun(-,\R)^\bW \right) \times \id_{\bD^{op}}} \arrow{d} & & & & s\S \times \bD^{op} \arrow{lllld} \\
& & \bD^{op}
\end{tikzcd} \]
in $\Cati$.  Then, by Proposition T.4.2.2.7, there is a unique ``fiberwise colimit'' lift in the diagram
\[ \begin{tikzcd}
\Gr(\Bar(\Z^{op})_\bullet) \arrow{r} \arrow{d} & s\S \times \bD^{op} \arrow{d} \\
\Gr(\Bar(\Z^{op})_\bullet) \underset{\bD^{op}}{\diamond} \bD^{op} \arrow{r} \arrow[dashed]{ru} & \bD^{op}
\end{tikzcd} \]
in $\Cati$.\footnote{The object in the bottom left of this diagram is a ``relative join'' (see Definition T.4.2.2.1), which in this case actually simply reduces to a ``directed mapping cylinder'' (see \cref{gr:ex cocart over walking arrow}).}  Thus, the resulting composite
\[ \bD^{op} \ra \Gr(\Bar(\Z^{op})_\bullet) \underset{\bD^{op}}{\diamond} \bD^{op} \ra s\S \times \bD^{op} \ra s\S \]
takes each object $[n]^\opobj \in \bD^{op}$ to the colimit of the composite
\[ \Bar(\Z^{op})_n = (\Z^{op})^{\times n} \xra{[-;\cdots;-]^{op}} \Z^{op} \hookra (\RelCatp)^{op} \ra \RelCat^{op} \xra{\Nervei \left( \Fun(-,\R)^\bW \right)} s\S . \]
We denote this simplicial object in simplicial spaces by
\[ \bD^{op} \xra{\preham(\R,\bW)} s\S . \]
Allowing $(\R,\bW) \in \RelCati$ to vary, this assembles into a functor
\[ \RelCati \xra{\preham} s(s\S) . \]
\end{constr}

We now show that the bisimplicial spaces of \cref{map all-sequences-of-zigzags-at-once corepresenter into a rel infty-cat} are in fact Segal simplicial spaces.

\begin{lem}\label{preham is Segal}
For any $(\R,\bW) \in \RelCati$, the object $\preham(\R,\bW) \in s(s\S)$ satisfies the Segal condition.
\end{lem}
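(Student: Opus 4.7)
The plan is to reduce the Segal condition for $\preham(\R,\bW)$ to the universality of colimits in the $\infty$-topos $s\S$, after decomposing iterated concatenations of relative words as iterated pushouts.

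First, I would observe by induction on $n$ from \cref{rel word concatenation is a pushout} that for any $\word{m}_1, \ldots, \word{m}_n \in \Z$, the concatenation $[\word{m}_1; \ldots; \word{m}_n]$ is canonically an iterated pushout of the $\word{m}_i$'s in $\RelCati$, glued via $\pt_\RelCati$ at consecutive target-to-source pairs. Applying the functor $\Fun(-,\R)^\bW : \RelCati^\opobj \to \Cati$, which preserves limits (as a full subcategory of the internal hom of the cartesian closed $\infty$-category $\RelCati$ of \cref{rnerves:define internal hom in relcats}), and then the limit-preserving functor $\Nervei : \Cati \xra[\sim]{} \CSS \hookra s\S$ (a right adjoint), converts this iterated pushout into an iterated fiber product
\[ \Nervei\!\left(\Fun([\word{m}_1; \ldots; \word{m}_n],\R)^\bW\right) \simeq \Nervei\!\left(\Fun(\word{m}_1,\R)^\bW\right) \times_{\Nervei(\bW)} \cdots \times_{\Nervei(\bW)} \Nervei\!\left(\Fun(\word{m}_n,\R)^\bW\right) \]
in $s\S$, where the base $\Nervei(\Fun(\pt_\RelCat,\R)^\bW) \simeq \Nervei(\bW)$ agrees with $\preham(\R,\bW)_0$ (since $\Bar(\Z^\opobj)_0 = \pt_\Cat$ and the unit map picks out $[\es] \in \Z$, which forgets to $\pt_\RelCat$).

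Second, I would substitute this identification into the defining colimit for $\preham(\R,\bW)_n$ and commute the colimit past the iterated fiber product. Since $s\S = \Fun(\bD^\opobj,\S)$ is an $\infty$-topos, colimits in $s\S$ are universal, so pullbacks preserve colimits; applying this iteratively to each of the $n$ coordinates of $(\Z^\opobj)^{\times n}$ yields
\[ \preham(\R,\bW)_n \simeq \preham(\R,\bW)_1 \times_{\preham(\R,\bW)_0} \cdots \times_{\preham(\R,\bW)_0} \preham(\R,\bW)_1. \]

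Finally, it remains to identify this equivalence with the canonical Segal map. This should be essentially automatic: the Segal map is induced by the inclusions $\word{m}_i \hookra [\word{m}_1;\ldots;\word{m}_n]$, which are precisely the legs of the pushout decomposition used in the first step, and hence correspond to the natural projections out of the fiber product. The main obstacle I anticipate is bookkeeping: verifying that the face and degeneracy maps of $\preham(\R,\bW)_\bullet$ -- which arise from the lax natural transformation of \cref{main constrn for hammock localizn} together with the fiberwise-colimit construction of Proposition T.4.2.2.7 -- are genuinely compatible with the pushout/pullback decomposition above. This should ultimately reduce to the observation that both structures are governed by the monoidal structure on $\Z$ (i.e.\! concatenation, with unit $[\es]$), so no essentially new content is required.
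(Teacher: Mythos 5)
Your proposal is correct and follows essentially the same approach as the paper: both arguments rest on (i) the identification of concatenations of relative words as iterated pushouts in $\RelCati$ (\cref{rel word concatenation is a pushout}), which $\Nervei(\Fun(-,\R)^\bW)$ converts into iterated fiber products over $\Nervei(\bW) \simeq \preham(\R,\bW)_0$, and (ii) universality of colimits in the $\infty$-topos $s\S$ plus Fubini for colimits to pull the $n$-fold colimit past the $n$-fold fiber product. The paper runs the calculation in the reverse direction (starting from the $n$-fold fiber product and reducing to $\preham(\R,\bW)_n$ by induction on $n$, whereas you start from $\preham(\R,\bW)_n$ and iterate over coordinates), and likewise waves off the final identification with the Segal map as a definitional chase; these are organizational differences, not substantive ones. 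One small imprecision: $\Fun(-,\R)^\bW$ is not a \emph{full} subcategory of the internal hom $\Fun(-,\R)^\Rel$ but a \emph{wide} one (it restricts morphisms, not objects); the limit-preservation you need instead follows from the fact that the $\RelCati$-valued internal hom $(\Fun(-,\R)^\Rel,\Fun(-,\R)^\bW)$ sends colimits to limits and that the projection $(\C,\bW_\C) \mapsto \bW_\C$ from $\RelCati$ to $\Cati$ preserves limits.
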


\begin{proof}
We must show that for every $n \geq 2$, the $n\th$ Segal map
\[ \preham(\R,\bW)_n \ra \preham(\R,\bW)_1 \underset{t,\preham(\R,\bW)_0,s}{\times} \cdots \underset{t,\preham(\R,\bW)_0,s}{\times} \preham(\R,\bW)_1 \]
(to the $n$-fold fiber product) is an equivalence in $s\S$.  As $s\S$ is an $\infty$-topos, colimits therein are universal, i.e.\! they commute with pullbacks (see Definition T.6.1.0.4 and Theorem T.6.1.0.6 (and the discussion at the beginning of \sec T.6.1.1)).  Moreover, note that we have a canonical equivalence $\preham(\R,\bW)_0 \simeq \Nervei(\bW)$ in $s\S$.  Hence, by induction, we have a string of equivalences
\begin{align*}
& \preham(\R,\bW)_1 \underset{t,\preham(\R,\bW)_0,s}{\times} \cdots \underset{t,\preham(\R,\bW)_0,s}{\times} \preham(\R,\bW)_1 \\
& \simeq \preham(\R,\bW)_1 \underset{\{1\} , \preham(\R,\bW)_0, \{0\}}{\times} \preham(\R,\bW)_{n-1} \\
& = \lim \left( \begin{tikzcd}[ampersand replacement=\&]
\& \colim_{(\word{m}_2,\ldots,\word{m}_n) \in (\Z^{op})^{\times (n-1)}} \Nervei \left( \Fun([\word{m}_2;\ldots;\word{m}_n],\R)^\bW \right) \arrow{d} \\
\colim_{\word{m}_1 \in \Z^{op}} \Nervei \left( \Fun(\word{m}_1,\R)^\bW \right) \arrow{r} \& \Nervei(\bW)
\end{tikzcd} \right) \\
& \simeq \colim_{\word{m}_1 \in \Z^{op}} \left(
\lim \left( \begin{tikzcd}[ampersand replacement=\&]
\& \colim_{(\word{m}_2,\ldots,\word{m}_n) \in (\Z^{op})^{\times (n-1)}} \Nervei \left( \Fun([\word{m}_2;\ldots;\word{m}_n],\R)^\bW \right) \arrow{d} \\
\Nervei \left( \Fun(\word{m}_1,\R)^\bW \right) \arrow{r} \& \Nervei(\bW)
\end{tikzcd} \right)
\right) \\
& \simeq \colim_{\word{m}_1 \in \Z^{op}} \left(
\colim_{(\word{m}_2,\ldots,\word{m}_n) \in (\Z^{op})^{\times (n-1)}} \left(
\lim \left( \begin{tikzcd}[ampersand replacement=\&]
\& \Nervei \left( \Fun([\word{m}_2;\ldots;\word{m}_n],\R)^\bW \right) \arrow{d} \\
\Nervei \left( \Fun(\word{m}_1,\R)^\bW \right) \arrow{r} \& \Nervei(\bW)
\end{tikzcd} \right)
\right)
\right) \\
& \simeq \colim_{(\word{m}_1,\ldots,\word{m}_n) \in (\Z^{op})^{\times n}} \Nervei \left( \Fun([\word{m}_1;\ldots;\word{m}_n],\R)^\bW \right) \\
& = \preham(\R,\bW)_n
\end{align*}
(where in the penultimate line we appeal to Fubini's theorem for colimits) which, chasing through the definitions, visibly coincides with the $n\th$ Segal map.  This proves the claim.
\end{proof}

We finally come to the main point of this section.

\begin{defn}\label{define hammock localizn}
By \cref{preham is Segal}, the functor given in \cref{map all-sequences-of-zigzags-at-once corepresenter into a rel infty-cat} admits a factorization
\[ \begin{tikzcd}
\RelCati \arrow{r}{\preham} \arrow[dashed]{rd} & s(s\S) \\
& \SsS \arrow[hook]{u}
\end{tikzcd} \]
through the $\infty$-category of Segal simplicial spaces.  We again denote this factorization by
\[ \RelCati \xra{\preham} \SsS , \]
and refer to it as the \bit{pre-hammock localization} functor.\footnote{The terminology ``pre-hammock localization'' should be parsed as ``pre-(hammock localization)'': it already contains the hammock simplicial spaces (see \cref{justify hom notation}), it is just not itself the hammock localization.}  Then, we define the \bit{hammock localization} functor
\[ \RelCati \xra{\ham} \CatsS \]
to be the composite
\[ \RelCati \xra{\preham} \SsS \xra{\spat(-)} \CatsS . \]
\end{defn}

\begin{rem}\label{justify hom notation}
Given a relative $\infty$-category $(\R,\bW)$, the $0\th$ level of its pre-hammock localization
\[ \preham(\R,\bW) \in \SsS \subset s(s\S) \]
is given by
\[ \colim \left( \{ [\es] \}^\opobj \hookra (\RelCatp)^{op} \ra \RelCat^{op} \xra{\Nervei \left( \Fun( - \R )^\bW \right) } s \S \right) , \]
which is simply the nerve $\Nervei(\bW) \in s\S$ of the subcategory $\bW \subset \R$ of weak equivalences.  Thus, its space of objects is simply
\[ \preham(\R,\bW)_0 \simeq \Nervei(\bW)_0 \simeq \bW^\simeq \simeq \R^\simeq . \]
Moreover, unwinding the definitions, it is manifestly clear that
\begin{itemizesmall}
\item its hom-simplicial spaces are precisely the hammock simplicial spaces of $(\R,\bW)$ (recall Definitions \ref{define space of objects and hom-sspaces} \and \ref{define hammocks}), and
\item its compositions correspond to concatenation of zigzags (with identity morphisms corresponding to zigzags of type $[\es] \in \Z$).
\end{itemizesmall}
Of course, we have a canonical counit weak equivalence
\[ \ham(\R,\bW) \we \preham(\R,\bW) \]
in $\SsS_\DK$ which is even fully faithful in the $s\S$-enriched sense, so that the hammock localization enjoys all these same properties.
\end{rem}

Just as in the 1-categorical case, the hammock localization of $(\R,\bW)$ admits a natural map from $\R$.

\begin{constr}\label{construct hammock localizn map}
Returning to \cref{main constrn for hammock localizn}, observe that there is a tautological section
\[ \begin{tikzcd}
\Grop(\Bar(\Z)_\bullet) \arrow{d} \\
\bD \arrow[dashed, bend left]{u}
\end{tikzcd} \]
which takes $[n] \in \bD$ to $([\any],\ldots,[\any]) \in \Z^{\times n} = \Bar(\Z)_n$, and which takes a map $[m] \xra{\varphi} [n]$ in $\bD$ to the map corresponding to the fiber map which, in the $i\th$ factor of $\Z^{\times m}$, is given by the unique map
\[ [ \any ] \ra [ \any^{\circ ( \varphi(i) - \varphi(i-1) ) } ] \]
in $\Z$.  This is opposite to a tautological section
\[ \begin{tikzcd}
\Gr(\Bar(\Z^{op})_\bullet) \arrow{d} \\
\bD^{op} \arrow[dashed, bend left]{u}
\end{tikzcd} \]
which gives rise to a composite map
\[ \bD^{op} \ra \Gr(\Bar(\Z^{op})_\bullet) \ra \Gr(\Bar(\Z^{op})_\bullet) \underset{\bD^{op}}{\diamond} \bD^{op} \]
admitting a natural transformation to the standard inclusion (as the ``target'' factor, i.e.\! the fiber over $1 \in [1]$).  This postcomposes with the composite
\[ \Gr(\Bar(\Z^{op})_\bullet) \underset{\bD^{op}}{\diamond} \bD^{op} \ra s\S \times \bD^{op} \ra s\S \]
appearing in \cref{map all-sequences-of-zigzags-at-once corepresenter into a rel infty-cat} to give a natural transformation
\[ \Nervei^\lw \left( \Fun([\bullet],\R)^\bW \right) \ra \preham(\R,\bW)_\bullet \]
in $\Fun(\bD^{op},s\S)$.\footnote{Note that this source is just the image of the Rezk pre-nerve $\preNerveRezki(\R,\bW)_\bullet \in s\Cati$ under the inclusion $s\Cati \xra{\sim} s\CSS \hookra s(s\S)$ (recall \cref{rnerves:define infty-categorical rezk pre-nerve and rezk nerve}).}  Thus, in simplicial degree $n$, this map is simply the inclusion into the colimit defining $\preham(\R,\bW)_n \in s\S$ at the object
\[ ([\any]^\opobj , \ldots , [\any]^\opobj) \in (\Z^{op})^{\times n} . \]
Restricting levelwise to (the nerve of) the maximal subgroupoid, we obtain a composite
\begin{align*}
\const(\R)_\bullet & = \const^\lw ( \forget_\CSS ( \Nervei(\R)))_\bullet \\
& = \const^\lw ( \hom_\Cati ( [\bullet],\R)) \\
& \simeq \const^\lw \left( \Fun([\bullet],\R)^\simeq \right) \\
& \simeq \Nervei^\lw \left( \Fun([\bullet],\R)^\simeq \right) \\
& \hookra \Nervei^\lw \left( \Fun([\bullet],\R)^\bW \right) \\
& \ra \preham(\R,\bW)_\bullet .
\end{align*}
As this source lies in $\CatsS \subset \SsS$, we obtain a canonical factorization
\[ \begin{tikzcd}
\const(\R) \arrow{r} \arrow[dashed]{rd} & \preham(\R,\bW) \\
& \ham(\R,\bW) \arrow{u}[sloped, anchor=north]{\approx}
\end{tikzcd} \]
in $(\CatsS)_\DK$.  This clearly assembles into a natural transformation
\[ \const \ra \ham \]
in $\Fun(\RelCati , \CatsS)$.
\end{constr}

\begin{defn}
For a relative $\infty$-category $(\R,\bW)$, we refer to the map
\[ \const(\R) \ra \ham(\R,\bW) \]
in $\CatsS$ of \cref{construct hammock localizn map} as its \bit{tautological inclusion}.
\end{defn}

We end this section with the following fundamental result, an analog of \cite[Proposition 3.3]{DKCalc}; roughly speaking, it shows that when considered as morphisms in the hammock localization, weak equivalences in $\R$ both represent and corepresent equivalences in the underlying $\infty$-category.  Just as with the fundamental theorem of homotopical three-arrow calculi (\ref{calculus gives reduction}), its proof will be substantially more involved than that of its 1-categorical analog (recall \cref{intro rem more details than DK}).

% \begin{rem}
% {\color{blue} [the spatial hammock localization should admit a canonical comparison map from the Rezk nerve, levelwise obtained by taking $[n]$ to the object $(\any,\ldots,\any) \in \Z^{\times n}$ (or something like that -- the inclusion into the colimit).  this should of course be a weak equivalence, in some appropriate sense.  ideally, i'd be able to combine this result with the global universal property of the Rezk nerve functor to conclude that my (spatial) hammock localization functor is a Barwick--Kan equivalence of relative $\infty$-categories.]}
%{\color{magenta} alternatively, just say: developing the theory of ``simplicial localizations'' of relative $\infty$-categories akin to \cite{DKSimpLoc} should allow us to prove a universal property of the hammock localization, that it induces an equivalence $(\RelCati)_\BarKan \ra \SsS_\DK$ of relative $\infty$-categories.  surprisingly enough, this isn't actually necessary for the proof of \cref{calculus result}, so we do not pursue it.}
% \end{rem}

\begin{prop}\label{hammocks are invt under w.e.}
Let $(\R,\bW) \in \RelCati$ let $r,y,z \in \R$.  Suppose we are given a weak equivalence
\[ w \in \hom_\bW(y,z) \subset \hom_\R(y,z) , \]
and let us also denote by $w \in \homhamR(y,z)_0$ the resulting composite morphism
\[ \pt_{s\S} \ra [\any](y,z) \ra \homhamR(y,z) . \]
Then, the induced composite maps
\[ \begin{tikzcd}[column sep=4cm, row sep=1.5cm]
\homhamR(r,y) \simeq \homhamR(r,y) \times \pt_{s\S} \arrow{r}{\id_{\homhamR(r,y)} \times w} & \homhamR(r,y) \times \homhamR(y,z) \arrow{d}{\chi^{\ham(\R,\bW)}_{r,y,z}} \\
&  \homhamR(r,z)
\end{tikzcd} \]
and
\[ \begin{tikzcd}[column sep=4cm, row sep=1.5cm]
\homhamR(z,r) \simeq \pt_{s\S} \times \homhamR(z,r) \arrow{r}{w \times \id_{\homhamR(z,r)}} & \homhamR(y,z) \times \homhamR(z,r) \arrow{d}{\chi^{\ham(\R,\bW)}_{y,z,r}} \\
& \homhamR(y,r)
\end{tikzcd} \]
in $s\S$ become equivalences in $\S$ upon application of the geometric realization functor $|{-}| : s\S \ra \S$.
\end{prop}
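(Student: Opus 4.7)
The plan is to construct an explicit homotopy inverse to $\chi^{\ham(\R,\bW)}_{r,y,z}(-,w)$, exploiting the fact that $w$, being a weak equivalence, corepresents a morphism in the reverse direction when viewed as a zigzag of type $[\bW^{-1}]$. I focus on the first claim of the proposition, concerning post-composition; the second (pre-composition) claim is entirely dual. Viewing $w$ as a functor $[\bW^{-1}] \to \R$ sending source to $z$, target to $y$, and the unique arrow to the weak equivalence $w$, I obtain a point which I denote $w^{-1} \in [\bW^{-1}](z,y)_0 \subset \homhamR(z,y)_0$, and set $\rho_w := \chi^{\ham(\R,\bW)}_{r,z,y}(-,w^{-1}) : \homhamR(r,z) \to \homhamR(r,y)$. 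Unpacking the construction of $\preham(\R,\bW)$ as a colimit together with the Segal-condition description of its composition, $\chi^{\ham(\R,\bW)}_{r,y,z}(-,w)$ is induced on each $\word{m}$-component ($\word{m} \in \Z^{op}$) by the concatenation $\word{m}(r,y) \to [\word{m};\any](r,z)$, $\vec{f} \mapsto [\vec{f};w]$, and similarly $\rho_w$ is induced on each $\word{m}'$-component by $\word{m}'(r,z) \to [\word{m}';\bW^{-1}](r,y)$, $\vec{h} \mapsto [\vec{h};w^{-1}]$.

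To prove $|\rho_w \circ \chi^{\ham(\R,\bW)}_{r,y,z}(-,w)| \simeq \id$ on $|\homhamR(r,y)|$, I examine the composite on each $\word{m}$-component, which is the map
\[ \Nervei(\word{m}(r,y)) \xra{[-;w;w^{-1}]} \Nervei([\word{m};\any;\bW^{-1}](r,y)) \to \homhamR(r,y) . \]
There is a map $[\word{m};\any;\bW^{-1}] \to \word{m}$ in $\Z$ collapsing both the appended $\any$ and the appended $\bW^{-1}$ to the identity at $t_\word{m}$ (each being an allowable zero-fold composition, per the footnotes to \cref{define relative word}), inducing a functor $\iota : \word{m}(r,y) \to [\word{m};\any;\bW^{-1}](r,y)$, $\vec{f} \mapsto [\vec{f};\id_y;\id_y]$. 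By naturality of the colimit cocones, the composition of $\iota$ with the $[\word{m};\any;\bW^{-1}]$-cocone inclusion recovers the $\word{m}$-cocone inclusion; that is, $\iota$ factors the $\word{m}$-component of the identity on $\homhamR(r,y)$ through $[\word{m};\any;\bW^{-1}](r,y)$. I then exhibit a natural transformation $\iota \Rightarrow [-;w;w^{-1}]$ in $\Fun(\word{m}(r,y),[\word{m};\any;\bW^{-1}](r,y))$, whose component at each $\vec{f}$ is the natural weak equivalence in $\Funp([\word{m};\any;\bW^{-1}],\R)^\bW$ from $[\vec{f};\id_y;\id_y]$ to $[\vec{f};w;w^{-1}]$ with entries equal to identities throughout, except at the middle object of the appended $[\any;\bW^{-1}]$ block, where it is $w : y \to z$; the two commutativity squares check out immediately. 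Since any natural transformation between functors of $\infty$-categories induces equivalent maps on groupoid completions (cf.\ \cref{rnerves:nat trans induces equivce betw maps on gpd-complns}), the two $\word{m}$-components agree after $\Nervei$ and $(-)^\gpd$, compatibly in $\word{m} \in \Z^{op}$, so the induced maps out of the colimit agree.

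The reverse composite $|\chi^{\ham(\R,\bW)}_{r,y,z}(-,w) \circ \rho_w| \simeq \id$ on $|\homhamR(r,z)|$ is proved analogously: its $\word{m}'$-component is $\word{m}'(r,z) \xra{[-;w^{-1};w]} [\word{m}';\bW^{-1};\any](r,z) \to \homhamR(r,z)$, the identity factors through $[\word{m}';\bW^{-1};\any]$ as $\vec{h} \mapsto [\vec{h};\id_z;\id_z]$ via the evident map in $\Z$, and the connecting natural transformation now runs from $[-;w^{-1};w]$ to $[-;\id_z;\id_z]$ with its sole non-identity component being $w : y \to z$ at the middle object (this is the direction in which the required weak equivalence at that position is available from the data of $w$ itself). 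The second statement of the proposition, concerning pre-composition $\chi^{\ham(\R,\bW)}_{y,z,r}(w,-)$, is proved by the same argument applied on the left, using zigzag types $[\bW^{-1};\any;\word{m}]$ and $[\any;\bW^{-1};\word{m}']$. The principal technical obstacle is verifying that the hand-specified objectwise weak equivalences really do assemble into $\infty$-categorical natural transformations (rather than mere objectwise homotopies), which should follow from a tensoring argument in the spirit of \cref{nat w.e. induces nat trans}; the coherence needed to ensure these pointwise identifications descend to the colimit over $\Z^{op}$ is routine but must be tracked carefully.
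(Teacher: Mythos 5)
Your proof takes essentially the same approach as the paper's: the same homotopy inverse $w^{-1}$, the same component-wise analysis reducing to the concatenation map $\word{m}(r,y) \to [\word{m};\any;\bW^{-1}](r,y)$ given by $\vec f \mapsto [\vec f; w; w^{-1}]$, the same collapse map (the paper's $\psi$, your $\iota$) factoring the identity cocone, the same connecting natural weak equivalence with a single non-identity component $w$ at the freshly introduced object, and the same final appeal to invariance of groupoid completion under natural transformations. The $\Z^{op}$-coherence you flag as the ``principal technical obstacle'' is indeed the crux: the paper upgrades the $\word{m}$-indexed natural transformations into a modification in $\Fun(\Z^{op},\Cati)$ and passes through Grothendieck-construction machinery to obtain a homotopy on colimits, which occupies the bulk of the three-page argument, so ``routine but must be tracked carefully'' somewhat underestimates it; still, you have correctly scoped all the required work.
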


\begin{proof}
We prove the first statement; the second statement follows by a nearly identical argument.

Using the composite
\[ \pt_{s\S} \ra [\bW^{-1}](z,y) \ra \homhamR(z,y) , \]
which morphism we will denote by $w^{-1} \in \homhamR(z,y)_0$, we can form the map
\[ \begin{tikzcd}[column sep=4cm, row sep=1.5cm]
\homhamR(r,z) \simeq \homhamR(r,z) \times \pt_{s\S} \arrow{r}{\id_{\homhamR(r,z)} \times w^{-1}} & \homhamR(r,z) \times \homhamR(z,y) \arrow{d}{\chi^{\ham(\R,\bW)}_{r,z,y}} \\
&  \homhamR(r,y)
\end{tikzcd} \]
in $s\S$.  We claim that upon geometric realization, this gives an inverse of the map
\[ \left| \homhamR(r,y) \right| \ra \left| \homhamR(r,z) \right| \]
in $\S$.  We will only show that the composite map
\[ \left| \homhamR(r,y) \right| \ra \left| \homhamR(r,z) \right| \ra \left| \homhamR(r,y) \right| \]
is an equivalence; that the composite 
\[ \left| \homhamR(r,z) \right| \ra \left| \homhamR(r,y) \right| \ra \left| \homhamR(r,z) \right| \]
is an equivalence will follow from a very similar argument.

For each $\word{m} \in \Z^{op}$, let us define a functor
\[ \word{m}(r,y) \xra{\varphi_{\word{m}}} [\word{m};\any;\bW^{-1}](r,y) \]
given informally by taking a zigzag
\[ \begin{tikzcd} r \arrow[-]{rr}{\word{m}} & & y \end{tikzcd} \]
in $(\R,\bW)$ to the zigzag
\[ \begin{tikzcd} r \arrow[-]{rr}{\word{m}} & & y \arrow{r} & z & y \arrow{l}[swap]{\approx} \end{tikzcd} \]
in $(\R,\bW)$, in which both new maps are the chosen weak equivalence $w$.\footnote{This (and subsequent constructions) can easily be made precise by defining a suitable notion of a map in a relative word being forced to land at $w$; we will leave such a precise construction to the interested reader.}  This operation is clearly natural in $\word{m} \in \Z^{op}$, i.e.\! it assembles into a natural transformation
\[ \begin{tikzcd}
\Z^{op} \arrow[bend left=80, out=80, in=100]{rr}{(-)(r,y)}[swap, pos=0.4, transform canvas={yshift=-3em}]{\left. \varphi \right\Downarrow} \arrow[bend right]{rd}[swap]{[-;\any;\bW^{-1}]} & & \Cati . \\
& \Z^{op} \arrow[bend right]{ru}[swap]{(-)(r,y)}
\end{tikzcd} \]
Then, using \cref{rnerves:groupoid-completion of CSSs} and the fact that the geometric realization functor $s\S \xra{|{-}|} \S$ commutes with colimits (being a left adjoint), we see that the composite
\[ \left| \homhamR(r,y) \right| \ra \left| \homhamR(r,z) \right| \ra \left| \homhamR(r,y) \right| \]
is obtained as the composite
\[ \begin{tikzcd}[column sep=4cm, row sep=1.5cm]
\colim_{\Z^{op}} \left( (-)^\gpd \circ (-)(r,y) \right) \arrow{d}[swap]{\colim_{\Z^{op}}(\id_{(-)^\gpd} \circ \varphi)} \\
\colim_{\Z^{op}} \left( (-)^\gpd \circ (-)(r,y) \circ [- ; \any ; \bW^{-1}] \right) \arrow{r}[swap]{\colim^\S([-;\any;\bW^{-1}])} & \colim_{\Z^{op}} \left( (-)^\gpd \circ (-)(r,y) \right) .
\end{tikzcd} \]
To see that this is an equivalence, for each $\word{m} \in \Z^{op}$ let us define a map $\word{m} \xra{\psi_\word{m}} [\word{m};\any;\bW^{-1}]$ in $\Z^{op}$ to be opposite the map $[\word{m};\any;\bW^{-1}] \ra \word{m}$ in $\Z$ which collapses the newly concatenated copy of $[\any;\bW^{-1}]$ to the map $\id_{t_\word{m}}$.  These assemble into a natural transformation $\id_{\Z^{op}} \xra{\psi} [-;\any;\bW^{-1}]$ in $\Fun(\Z^{op},\Z^{op})$, and hence we obtain a natural transformation
\[ \begin{tikzcd}[column sep=2cm, row sep=1.5cm]
\Z^{op} \arrow[bend left=80, out=80, in=100]{rr}{(-)(r,y)}[swap, pos=0.21, transform canvas={yshift=-3.1em}]{\left. \id_{(-)(r,y)} \circ \psi \right\Downarrow} \arrow[bend right]{rd}[swap]{[-;\any;\bW^{-1}]} & & \Cati . \\
& \Z^{op} \arrow[bend right]{ru}[swap]{(-)(r,y)}
\end{tikzcd} \]
Moreover, For each $\word{m} \in \Z^{op}$ we have a functor
\[ [1] \times \word{m}(r,y) \xra{\mu_\word{m}} [\word{m};\any;\bW^{-1}](r,y) , \]
adjoint to a functor
\[ \word{m}(r,y) \ra \Fun([1],[\word{m};\any;\bW^{-1}](r,y)) , \]
given informally by taking a zigzag
\[ \begin{tikzcd} r \arrow[-]{rr}{\word{m}} & & y \end{tikzcd} \]
in $(\R,\bW)$ to the diagram
\[ \begin{tikzcd}
r \arrow[equals]{d} \arrow[-]{rr}{\word{m}} & & y \arrow[equals]{d} \arrow{r} & y \arrow{d}[sloped, anchor=south]{\approx} & y \arrow{l}[swap]{\approx} \arrow[equals]{d} \\
r \arrow[-]{rr}[swap]{\word{m}} & & y \arrow{r} & z & y \arrow{l}{\approx}
\end{tikzcd} \]
in $(\R,\bW)$ representing a morphism in $[\word{m};\any;\bW^{-1}](r,y)$, where the maps in the right two squares are all either the chosen weak equivalence $y \we z$ or are $\id_y$.  These assemble into a morphism
\[ \const([1]) \times (-)(r,y) \xra{\mu} (-)(r,y) \circ [-;\any;\bW^{-1}] \]
in $\Fun(\Z^{op},\Cati)$, i.e.\! a modification from $\id_{(-)(r,y)} \circ \psi$ to $\varphi$.  By \cref{gr:mod gives nat trans}, this induces a natural transformation
\[ \begin{tikzcd}[column sep=2cm]
\Gr((-)(r,y)) \arrow[bend left=50]{r}{\Gr(\id_{(-)(r,y)} \circ \psi)}[swap, pos=0.32, transform canvas={yshift=-2.8em}]{\left. \Gr(\mu) \right\Downarrow} \arrow[bend right=50]{r}[swap]{\Gr(\varphi)} & \Gr((-)(r,y) \circ [-;\any;\bW^{-1}])
\end{tikzcd} \]
which, by \cref{rnerves:nat trans induces equivce betw maps on gpd-complns} and \cref{gr:groupoid-completion of the grothendieck construction}, gives a homotopy between the maps
\[ \colim_{\Z^{op}} \left( (-)^\gpd \circ (-)(r,y) \right)
\xra{\colim_{\Z^{op}}( \id_{(-)^\gpd} \circ \id_{(-)(r,y)} \circ \psi)}
\colim_{\Z^{op}} \left( (-)^\gpd \circ (-)(r,y) \circ [-;\any;\bW^{-1}] \right) \]
and
\[ \colim_{\Z^{op}} \left( (-)^\gpd \circ (-)(r,y) \right)
\xra{\colim_{\Z^{op}}(\id_{(-)^\gpd} \circ \varphi)}
\colim_{\Z^{op}} \left( (-)^\gpd \circ (-)(r,y) \circ [-;\any;\bW^{-1}] \right) \]
in $\S$.  Hence, to show that the above composite is an equivalence, it suffices to show that the composite
\[ \begin{tikzcd}[column sep=4cm, row sep=1.5cm]
\colim_{\Z^{op}} \left( (-)^\gpd \circ (-)(r,y) \right) \arrow{d}[swap]{\colim_{\Z^{op}}( \id_{(-)^\gpd} \circ \id_{(-)(r,y)} \circ \psi)
 } \\
\colim_{\Z^{op}} \left( (-)^\gpd \circ (-)(r,y) \circ [- ; \any ; \bW^{-1}] \right) \arrow{r}[swap]{\colim^\S([-;\any;\bW^{-1}])} & \colim_{\Z^{op}} \left( (-)^\gpd \circ (-)(r,y) \right)
\end{tikzcd} \]
is an equivalence.  But this composite fits into a commutative triangle
\[ \begin{tikzcd}[row sep=0.5cm]
\colim_{\Z^{op}} ((-)^\gpd \circ (-)(r,y) \circ \id_{\Z^{op}}) \arrow{dd} \arrow{rd}[sloped, pos=0.45]{\sim} \\
& \colim_{\Z^{op}} ((-)^\gpd \circ (-)(r,y)) \\
\colim_{\Z^{op}} ((-)^\gpd \circ (-)(r,y) \circ [-;\any;\bW^{-1}]) \arrow{ru}
\end{tikzcd} \]
obtained by applying \cref{gr:triangle of colimits} to the diagram
\[ \begin{tikzcd}[column sep=1.5cm]
\Z^{op} \arrow[bend left=50]{r}{\id_{\Z^{op}}}[swap, pos=0.3, transform canvas={yshift=-1.2em}]{\left. \psi \right\Downarrow} \arrow[bend right=50]{r}[swap]{[-;\any;\bW^{-1}]} & \Z^{op} \arrow{r}{(-)(r,y)} & \Cati ,
\end{tikzcd} \]
so it is an equivalence.  This proves the claim.
\end{proof}

\begin{rem}\label{und infty-cat of ham inverts weak equivces}
By Yoneda's lemma, \cref{hammocks are invt under w.e.} implies that (the morphisms corresponding to) weak equivalences in $\ham(\R,\bW)$ become equivalences in the underlying $\infty$-category
\[ \Nervei^{-1} \left( \leftloc_\CSS \left( \left| \ham(\R,\bW) \right| \right) \right)  \in \Cati , \]
i.e.\! upon application of the composite
\[ \CatsS \xra{|{-}|} \SS \xra{\leftloc_\CSS} \CSS \xra[\sim]{\Nervei^{-1}} \Cati . \]
It follows that there exists a unique factorization
\[ \begin{tikzcd}
\R \arrow{r} \arrow{d} & \Nervei^{-1} \left( \leftloc_\CSS \left( \left| \ham(\R,\bW) \right| \right) \right) \\
\loc{\R}{\bW} \arrow[dashed]{ru}
\end{tikzcd} \]
of the image of the tautological inclusion $\const(\R) \ra \ham(\R,\bW)$ in $\CatsS$.
\end{rem}

\section{From fractions to complete Segal spaces, redux}\label{section fractions redux}

As an application of the theory developed in this paper, we now provide a \textit{sufficient condition} for the Rezk nerve
$ \NerveRezki(\R,\bW) \in s\S $
of a relative $\infty$-category $(\R,\bW)$ to be either
\begin{itemizesmall}
\item a Segal space or
\item a complete Segal space,
\end{itemizesmall}
thus giving a partial answer to our own \cref{rnerves:ask for CSS}, which refer to as the \bit{calculus theorem}.\footnote{The Rezk nerve is a straightforward generalization of Rezk's ``classification diagram'' construction, which we introduced and studied in \cref{rnerves:section rezk nerve}.}  This result is itself a direct generalization of joint work with Low regarding relative 1-categories (see \cite[Theorem 4.11]{LowMG}).  That result, in turn, generalizes work of Rezk, Bergner, and Barwick--Kan; we refer the reader to \cite[\sec 1]{LowMG} for a more thorough history.

% \noindent {\color{blue} \small Recall \cref{rnerves:ask for CSS}, along with the setup.}  {\small \color{magenta} recall history of the fractions result: rezk, bergner, barwick--kan, low--M-G (in increasing generality).  it has its roots in \cite[Theorem 8.3]{RezkCSS}, which deals with the special case of a simplicial model category.  However, besides relying on the presence of a compatible simplicial enrichment, the proof of that result ultimately relies on applying the fundamental theorem (of model categories) to the model category in question.  Since our proof of the fundamental theorem of model $\infty$-categories will rely on the ``localization theorem'' (and \cref{rnerves:extract hom-spaces from a CSS}) in order to identify the hom-spaces in the localization, an attempt to adapt this method of proof to the present setting would ultimately be circular.  Thus, our proof could instead follow that of \cite[2.2]{BK-partial}, which neither assumes the presence of a compatible simplicial enrichment nor relies on the fundamental theorem of model categories.}

\begin{thm}\label{calculus result}
Suppose that $(\R,\bW) \in \RelCati$ admits a homotopical three-arrow calculus.
\begin{enumerate}
\item\label{calculus for SS} $\NerveRezki(\R,\bW) \in s\S$ is a Segal space.
\item\label{calculus for CSS} Suppose moreover that $\bW \subset \R$ satisfies the two-out-of-three property.  Then $\NerveRezki(\R,\bW) \in s\S$ is a complete Segal space if and only if $(\R,\bW)$ is saturated.
\end{enumerate}
\end{thm}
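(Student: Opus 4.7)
The plan is to leverage the fundamental theorem of homotopical three-arrow calculi (\cref{calculus gives reduction}) together with the Segal property of the pre-hammock localization (\cref{preham is Segal}), viewed through the lens of cocartesian fibrations.

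For Part \ref{calculus for SS}, I would first note that the equivalence $[n] \simeq [1] \cup_{[0]} \cdots \cup_{[0]} [1]$ in $\Cati$ induces an equivalence
\[ \Fun([n],\R)^\bW \simeq \Fun([1],\R)^\bW \underset{\bW}{\times} \cdots \underset{\bW}{\times} \Fun([1],\R)^\bW \]
in $\Cati$, via source/target evaluations. The Segal map for $\NerveRezki(\R,\bW)$ is then the comparison between the geometric realization of this $\infty$-categorical pullback and the iterated homotopy pullback in $\S$ of the geometric realizations. By \cref{ev half-doubly-pointed}, the target evaluation $\Fun([1],\R)^\bW \ra \bW$ is a cocartesian fibration, so by an $\infty$-categorical analog of Quillen's Theorem B, the Segal map is an equivalence provided that for every $w \in \bW$, the post-composition functor between the relevant fibers becomes an equivalence upon groupoid completion. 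The three-arrow calculus hypothesis should supply precisely this invariance: the $\bW^{-1}$ strands in a three-arrow zigzag furnish an inverse up to natural weak equivalence, in an argument structurally analogous to the proof of \cref{hammocks are invt under w.e.}, with \cref{calculus gives reduction} (or its proof technique) delivering the necessary compatibility.

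For Part \ref{calculus for CSS}, granting Part \ref{calculus for SS}, the Segal space $\NerveRezki(\R,\bW)$ presents $\loc{\R}{\bW}$ (by \cref{rnerves:Rezk nerve computes localization}), so its subspace of homotopy equivalences consists of those arrows in $\R$ becoming invertible in $\loc{\R}{\bW}$. Meanwhile $\NerveRezki(\R,\bW)_0 \simeq \left| \bW \right|$, and under the two-out-of-three hypothesis this identifies with the image of $\bW$ in $\loc{\R}{\bW}^\simeq$. The completeness condition --- that the degeneracy from $\NerveRezki(\R,\bW)_0$ onto the subspace of homotopy equivalences in $\NerveRezki(\R,\bW)_1$ is an equivalence --- therefore holds exactly when $\bW$ captures all localization-isomorphisms, which is precisely saturation, giving both directions of the iff.

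The main obstacle is the fiberwise-invariance step in Part \ref{calculus for SS}. While three-arrow calculus is clearly the correct hypothesis, translating it into the invariance of the cocartesian-fibration fibers of $\Fun([1],\R)^\bW \ra \bW$ requires an adaptation of the coend-style manipulations of \cref{calculus gives reduction}; in particular, one must compensate for the mismatch between the all-forward shape $[\any]$ controlling the Rezk nerve and the three-arrow shape $\word{3}$ natural to the hammock side. An alternative, closer in spirit to the approach in \cite{LowMG}, would be to directly compare $\NerveRezki(\R,\bW)$ with $\left| \ham(\R,\bW) \right|$, identifying both with a common ``three-arrow Segal space'' via the coend and Grothendieck-construction techniques of Sections \ref{section zigzags and hammocks} and \ref{section three-arrow calculi}.
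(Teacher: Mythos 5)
Your proposal identifies the right circle of ideas, but the route you sketch for Part \ref{calculus for SS} has the gap you yourself suspect, and it is a real one. The cocartesian fibration $\Fun([1],\R)^\bW \xra{t} \bW$ (or its source-evaluation cousin) is \emph{not} the fibration to which the homotopical three-arrow calculus gives fiberwise Thomason-invariance --- the calculus hypothesis (\cref{define calculus}) says nothing directly about $[\any]$-shaped zigzags. The paper, following \cite{LowMG} essentially line-by-line, instead applies the Theorem-B-style criterion (\cref{gr:pullback of cocart fibn}, via \cref{ev at free target gives cocart fibn}\ref{add W} and the classifying functors of \cref{notation for half-doubly-pointed words}) to the \emph{three-arrow} fibrations $\bW^{op} \xra{\word{3}(x,-)} \Cati$ and more generally $\bW^{op} \xra{[\bW^{-1};\any^{\circ n};\bW^{-1}](x,-)} \Cati$, where the required invariance is exactly what the three-arrow calculus supplies (this is the content of the analogs of \cite[Propositions 4.8--4.10]{LowMG}, built on \cref{concatenated zigzags} and \cref{gr:invce of two-sided Gr}). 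The connection back to the Rezk nerve's iterated-pullback decomposition --- which you start from --- is then mediated by \cref{calculus gives reduction} and \cref{hammocks are invt under w.e.} rather than by a direct fiberwise argument on $\Fun([1],\R)^\bW \to \bW$. Your ``alternative'' paragraph is the one that points in the right direction, but it stops at the same place your main plan does: it does not say how to engineer the homotopy pullback that trades the $[\any]$-shape for the $\word{3}$-shape, which is precisely the technical heart of \cite[Propositions 4.8--4.10]{LowMG}.

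For Part \ref{calculus for CSS}, your argument is in the right spirit but it quietly assumes the key input: that $\leftloc_\CSS(\NerveRezki(\R,\bW)) \simeq \Nervei(\loc{\R}{\bW})$, i.e.\ the local universal property of the Rezk nerve (\cref{rnerves:rezk nerve of a relative infty-category is initial}). Only with this identification in hand can one say that the ``subspace of homotopy equivalences'' in $\NerveRezki(\R,\bW)_1$ consists of the arrows of $\R$ that invert in $\loc{\R}{\bW}$, and the two-out-of-three hypothesis is then used to show that ``being a weak equivalence'' is a property of a path component of $\NerveRezki(\R,\bW)_1$ rather than of a chosen representative. With those two ingredients named explicitly (and the subdivision-zigzag argument $\Nerve^{-1}(\sd^i(\Delta^1)) \to \C$ for the ``only if'' direction), your sketch of Part \ref{calculus for CSS} matches the paper's. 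Finally, note the paper's proof is formatted as a step-by-step translation of \cite{LowMG} rather than as a self-contained argument, so even a correct filled-in version of your proposal would look different in presentation.
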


The proof of the calculus theorem (\ref{calculus result}) is very closely patterned on the proof of \cite[Theorem 4.11]{LowMG} (the main theorem of that paper), which is almost completely analogous but holds only for relative 1-categories.\footnote{The 1-categorical Rezk nerve and the Rezk nerve of a relative $\infty$-category are essentially equivalent (see \cref{rnerves:infty-catl rezk nerve agrees with 1-catl rezk nerve}), which is why essentially the same proof can be applied in both cases.}  We encourage any reader who would like to understand it to first read that paper: there are no truly new ideas here, only generalizations from 1-categories to $\infty$-categories.

\begin{proof}[Proof of \cref{calculus result}]
For this proof, we give a detailed step-by-step explanation of what must be changed in the paper \cite{LowMG} to generalize its main theorem from relative 1-categories to relative $\infty$-categories.
\begin{itemize}

\item For \cite[Definition 2.1]{LowMG}, we replace the notion of a ``weak homotopy equivalence'' of categories by the notion of a map in $\Cati$ which becomes an equivalence under $(-)^\gpd : \Cati \ra \S$ (i.e.\! a Thomason weak equivalence (see \cref{gr:define Thomason model str} and \cref{gr:Th w.e.'s created by gpd compln})).

\item The proof of \cite[Lemma 2.2]{LowMG} carries over easily using \cref{rnerves:nat trans induces equivce betw maps on gpd-complns}.

\item For \cite[Definition 2.3]{LowMG}, we replace the notion of a ``homotopy pullback diagram'' of categories by the notion of a commutative square in $\Cati$ which becomes a pullback square under $(-)^\gpd : \Cati \ra \S$ (i.e.\! a homotopy pullback diagram in $(\Cati)_\Thomason$).

\item For \cite[Definition 2.4]{LowMG}, we replace the notions of ``Grothendieck fibrations'' and ``Grothendieck opfibrations'' of categories by those of cartesian fibrations and cocartesian fibrations of $\infty$-categories (see \cref{gr:section gr} and \cite{grjl}).

\item For \cite[Remark 2.5]{LowMG}, as the entire theory of $\infty$-categories is in essence already only pseudofunctorial, there is no corresponding notion of a co/cartesian fibration being ``split'' (or rather, \textit{every} co/cartesian fibration should be thought of as being ``split'').

\item The evident generalization of \cite[Example 2.6]{LowMG} can be obtained by applying Corollary T.2.4.7.12 to an identity functor of $\infty$-categories.

\item The evident generalization of (the first of the two dual statements of) \cite[Theorem 2.7]{LowMG} is proved as \cref{gr:pullback of cocart fibn}.

\item The evident generalization of \cite[Corollary 2.8]{LowMG} again follows directly (or can alternatively be obtained by combining \cref{rnerves:ex maximal localization} and \cref{rnerves:localization preserves finite products}).

\item For \cite[Definition 2.9]{LowMG}, we use the definition of the ``two-sided Grothendieck construction'' given in \cref{gr:define two-sided Gr}.  (Note that the 1-categorical version is simply the corresponding (strict) fiber product.)

\item The evident analog of \cite[Lemma 2.11]{LowMG} is proved as \cref{gr:invce of two-sided Gr}.

\item For \cite[Definition 3.1]{LowMG}, we replace the notion of a ``relative category'' by the notion of a ``relative $\infty$-category'' given in \cref{rnerves:define rel infty-cat}; recall from \cref{rnerves:weaker defn of rel infty-cat} that here we are actually working with a slightly weaker definition.  We replace the notion of its ``homotopy category'' by that of its localization given in \cref{rnerves:define localization}.  We have already defined the notion of a relative $\infty$-category being ``saturated'' in \cref{rnerves:define saturated}.

\item For \cite[Definition 3.2]{LowMG}, we have already made the analogous definitions in \cref{rnerves:define internal hom in relcats}.

\item For \cite[Definitions 3.3 and 3.6]{LowMG}, we have already made the analogous definitions in Definitions \ref{define relative word} \and \ref{define zigzags}.

\item The evident analog of \cite[Remark 3.7]{LowMG} is now true by definition (recall \cref{define enrichment and tensoring of doubly-pointed relcats over relcats}).

\item For \cite[Proposition 3.8]{LowMG}, the paper actually only uses part (ii), whose evident analog is provided by \cref{ev at free target gives cocart fibn}\ref{add W}.

\item For \cite[Lemma 3.10]{LowMG}, note that the functors in the statement of the result as well as in its proof are all corepresented by maps in $\RelCatp \subset \RelCatip$; the proof of the analogous result thus carries over by \cref{nat w.e. induces nat trans}.

\item For \cite[Lemma 3.11]{LowMG}, again everything in the statement of the result as well as in its proof are all corepresented; again the proof carries over by \cref{nat w.e. induces nat trans}.

\item For \cite[Definition 4.1]{LowMG}, we have already defined a ``homotopical three-arrow calculus'' for a relative $\infty$-category in \cref{define calculus}.

\item For \cite[Theorem 4.5]{LowMG}, we use the more general but slightly different definition of hammocks given in \cref{define hammocks} (recall \cref{hammocks disagree}); part (i) is proved as \cref{calculus gives reduction}, while part (ii) follows immediately from the definitions, particularly Definitions \ref{define hammock localizn} \and \ref{define space of objects and hom-sspaces}.  (Note that in the present framework, the ``reduction map'' is simply replaced by the canonical map to the colimit defining the simplicial space of hammocks.)

\item For \cite[Corollary 4.7]{LowMG}, the evident analog of \cite[Proposition 3.3]{DKCalc} is proved as \cref{hammocks are invt under w.e.}.

\item For \cite[Proposition 4.8]{LowMG}, the proof carries over essentially without change.  (The functor considered there when proving that the rectangle (AC) is a homotopy pullback diagram is replaced by our functor $\bW^{op} \xra{\word{3}(x,-)} \Cati$ of \cref{notation for half-doubly-pointed words}.)

\item For \cite[Lemma 4.9]{LowMG}, the map itself in the statement of the result comes from the functoriality
\[ \bW^{op} \xra{[\bW^{-1};\any^{\circ n};\bW^{-1}](x,-)} \Cati \]
and
\[ \bW \xra{[\bW^{-1};\any^{\circ n};\bW^{-1}](-,y)} \Cati \]
of \cref{notation for half-doubly-pointed words}, as do the vertical maps in the commutative square in the proof.  The horizontal maps in that square are corepresented by maps in $\Z \subset \RelCatp \subset \RelCatip$, and it clearly commutes by construction.  The evident analog of \cite[Proposition 9.4]{DKCalc} is proved as \cref{concatenated zigzags}.

\item For \cite[Proposition 4.10]{LowMG}, note that all morphisms in both the statement of the result and its proof are corepresented by maps in $\Z \subset \RelCatp \subset \RelCatip$; the proof itself carries over without change.

\item For \cite[Theorem 4.11]{LowMG} (whose analog is \cref{calculus result} itself), note that we are now proving an $\infty$-categorical statement (instead of a model-categorical one), and so there are no issues with fibrant replacement.

\begin{itemize}

\item

The proof of part \ref{calculus for SS} of \cref{calculus result} is identical to the proof of part (i) there: it follows from our analog of \cite[Proposition 4.10]{LowMG}.

\item

We address the two halves of the proof of part \ref{calculus for CSS} of \cref{calculus result} in turn.

\begin{itemize}

\item

The proof of the ``only if'' direction runs analogously to that of \cite[Theorem 4.11(ii)]{LowMG}, only now we use that given two objects $\pt_\Cati \rra \C$ in an $\infty$-category $\C$, any path between their postcompositions $\pt_\Cati \rra \C \ra \C^\gpd$ can be represented by a zigzag $\Nerve^{-1}(\sd^i(\Delta^1)) \ra \C$ connecting them (for some sufficiently large $i$).

\item

We must modify the proof of the ``if'' direction slightly, as follows.  Assume that $(\R,\bW) \in \RelCati$ is saturated.  By the local universal property of the Rezk nerve (\cref{rnerves:rezk nerve of a relative infty-category is initial}), we have an equivalence $\leftloc_\CSS(\NerveRezki(\R,\bW)) \simeq \Nervei(\loc{\R}{\bW})$ in $\CSS \subset s\S$.  Note also that by the two-out-of-three assumption, any two objects $\pt_\Cati \rra \Fun([1],\R)^\bW$ which select the same path component under the composite
\[ \pt_\Cati \rra \Fun([1],\R)^\bW \ra \left( \Fun([1],\R)^\bW \right)^\gpd = \NerveRezki(\R,\bW)_1 \]
are either both weak equivalences or both not weak equivalences.  Now, for any object of $\Fun([1],\R)^\bW$, recalling \cref{any SS pulled back from its CSS-localizn} and invoking the saturation assumption, we see that the corresponding map $[1] \ra \R$ selects an equivalence under the postcomposition
$[1] \ra \R \ra \loc{\R}{\bW}$
if and only if it factors as $[1] \ra \bW \hookra \R$.  From here, the proof proceeds identically.
\qedhere
\end{itemize}
\end{itemize}
\end{itemize}
\end{proof}

\begin{rem}
After establishing the necessary facts concerning model $\infty$-categories, we obtain an analog of \cite[Corollary 4.12]{LowMG} as \cref{fundthm:rnerve is a CSS}.
\end{rem}

\begin{rem}
In light of \cref{rnerves:infty-catl rezk nerve agrees with 1-catl rezk nerve}, \cite[Remark 4.13]{LowMG} is strictly generalized by the local universal property of the Rezk nerve (\cref{rnerves:rezk nerve of a relative infty-category is initial}).
\end{rem}

\bibliographystyle{amsalpha}
\bibliography{hammocks}{}

\end{document}